\newcommand{\tsum}{\textstyle{\sum}}
\newcommand{\beq}{\begin{equation}}
\newcommand{\eeq}{\end{equation}}
\newcommand{\beqa}{\begin{eqnarray}}
\newcommand{\eeqa}{\end{eqnarray}}
\newcommand{\beqas}{\begin{eqnarray*}}
\newcommand{\eeqas}{\end{eqnarray*}}
\newcommand{\bi}{\begin{itemize}}
\newcommand{\ei}{\end{itemize}}
\newcommand{\ba}{\begin{array}}
\newcommand{\ea}{\end{array}}
\newcommand{\gap}{\hspace*{2em}}
\newcommand{\nn}{\nonumber}
\def\endproof{{\ \hfill\hbox{%
      \vrule width1.0ex height1.0ex
    }\parfillskip 0pt}\par}
\def\eqnok#1{(\ref{#1})}
\def\argmin{{\rm argmin}}
\def\Argmin{{\rm Argmin}}
\def\exp{{\rm exp}}
\def\vgap{\vspace*{.1in}}
\def\endproof{{\ \hfill\hbox{%
      \vrule width1.0ex height1.0ex
    }\parfillskip 0pt}\par}
\def\all1{{{\bf e} }}
\def\cX{{h}}
\newcommand{\bbe}{\mathbb{E}}
\def\prob{\mathop{\rm Prob}}
\def\Prob{{\hbox{\rm Prob}}}
\newcommand{\bbr}{\mathbb{R}}
\def\w{\omega}
\def\Pr{{{\cal M}}}
\def\rec{{R_{ac}}}
\def\cQ{{\cal Q}}
\def\gap{{\rm gap}}
\title{
An optimal randomized incremental gradient method %for convex optimization
%Bypassing gradient computation through randomization
\thanks{The author of this paper was partially supported by
    NSF grant  CMMI-1537414, DMS-1319050, 
    ONR grant N00014-13-1-0036 and
    NSF CAREER Award CMMI-1254446.}}
\author{
    Guanghui Lan
    \thanks{Department of Industrial and Systems
    Engineering, University of Florida, Gainesville, FL, 32611.
    (email: {\tt glan@ise.ufl.edu}).}
    \and
    Yi Zhou
    \thanks{Department of Industrial and Systems
    Engineering, University of Florida, Gainesville, FL, 32611.
    (email: {\tt yizhou@ufl.edu}).}
}
\begin{document}

\maketitle

%This upper complexity bound matches well a recently developed lower complexity bound 
%by Agarwal and Bottou~\cite{AgrBott14-1} for minimizing problems with finite sums, 
%indicating that RPDG is an optimal 
%incremental gradient method. 
\begin{abstract}
In this paper, we consider a class of finite-sum convex optimization problems whose objective function is given by 
the summation of $m$ ($\ge 1$) smooth components together with some other relatively simple
terms. We first introduce a deterministic primal-dual gradient (PDG) method that can achieve the optimal black-box iteration complexity 
for solving these composite optimization problems
using a primal-dual termination criterion. Our major contribution is to develop
a randomized primal-dual gradient (RPDG) method, which
needs to compute the gradient of only one randomly selected smooth component at each iteration,
but can possibly achieve better complexity than PDG in terms of the total number of gradient evaluations. 
More specifically, we show that the total number of gradient evaluations performed by RPDG
can be ${\cal O} (\sqrt{m})$ times smaller, both in expectation and
with high probability, than those performed by deterministic optimal first-order methods under favorable situations.
We also show that the complexity of the RPDG method is not improvable 
by developing a new lower complexity bound for a general class of randomized methods for solving
large-scale finite-sum convex optimization problems.
Moreover, through the development of PDG and RPDG, we introduce a novel game-theoretic interpretation
for these optimal methods for convex optimization.

%Our development have thus addressed a few unresolved issues in the 

\vspace{.1in}

\noindent {\bf Keywords:} convex programming, complexity, 
incremental gradient, primal-dual gradient method, Nesterov's method, data analysis

\vspace{.07in}

\noindent {\bf AMS 2000 subject classification:} 90C25, 90C06, 90C22, 49M37

\end{abstract}

\vspace{0.1cm}

\setcounter{equation}{0}
\section{Introduction}
The basic problem of interest in this paper is the convex programming (CP) problem given by
\beq \label{cp}
\Psi^* := \min_{x \in X} \left\{\Psi(x) := \tsum_{i=1}^m f_i(x) + h(x) + \mu \, \w(x) \right\}.
\eeq
Here, $X \subseteq \bbr^n$ is a closed convex set, $\cX$ is a relatively simple convex function,
$f_i: \bbr^n \to \bbr$, $i = 1, \ldots, m$, are smooth convex functions with Lipschitz
continuous gradient, i.e., $ \exists L_i \ge 0$ such that
\beq \label{smoothness}
\| \nabla f_i(x_1) - \nabla f_i(x_2) \|_* \le L_i \|x_1 - x_2 \|, \ \ \forall x_1, x_2 \in \bbr^n,
\eeq
$\w: X \to \bbr$ is a strongly convex function with modulus $1$ w.r.t. an arbitrary norm $\|\cdot\|$, i.e.,
\begin{equation}\label{s27}
 \langle \w'(x_1)-\w'(x_2), x_1-x_2 \rangle \geq \tfrac{1}{2} \|x_1-x_2\|^2,\;\;\forall x_1,x_2 \in X,
\end{equation}
and $\mu \ge 0$ is a given constant. Hence,  the objective function $\Psi$ is strongly convex whenever $\mu > 0$.
For notational convenience, we also denote $f(x) \equiv \tsum_{i=1}^m f_i(x)$ and
$L \equiv \tsum_{i=1}^m L_i$. 
It is easy to see that for some $L_f \ge 0$,
\beq \label{smooth_f}
\|\nabla f(x_1) - \nabla f(x_2) \|_* \le L_f \|x_1 - x_2\| \le L \|x_1- x_2\|, \ \ \forall x_1, x_2 \in \bbr^n.
\eeq
 Throughout this paper, we assume subproblems of the form
\beq \label{simple_problem}
\argmin_{x \in X} \langle g, x\rangle + h(x) + \mu \, \w(x)
\eeq
are easy to solve.
CP given in the form of \eqnok{cp} has recently found a wide range of applications
in machine learning, statistics, and image processing, and hence becomes the subject of 
intensive studies during the past few years.

Stochastic (sub)gradient descent (SGD) (a.k.a. stochastic approximation (SA)) type methods have been proven 
useful to solve problems given in the form of~\eqnok{cp}.
SGD was originally designed to solve stochastic optimization problems given by
\beq \label{SP}
\min_{x \in X} \bbe_\xi[F(x, \xi)],
\eeq
where $\xi$ is a random variable with support $\Xi \subseteq \bbr^d$. Problem \eqnok{cp} can be viewed as
a special case of \eqnok{SP} by setting $\xi$ to be a discrete random variable supported on $\{1, \ldots, m\}$ with
$\Prob\{\xi = i\} = \nu_i$ and $F(x, i) = \nu_i^{-1} f_i(x) + h(x) + \mu \w(x)$, $i = 1, \ldots, m$.
Since each iteration of SGDs needs to compute the (sub)gradient of only one randomly selected $f_i$ \footnote{
Observe that the subgradients of $h$ and $\w$ are not required due to the assumption in \eqnok{simple_problem}.},
their iteration cost is significantly smaller than that for
deterministic first-order methods (FOM), which involves the computation
of first-order information of $f$ and thus all the $m$ (sub)gradients of $f_i$'s. Moreover, when $f_i$'s are 
general nonsmooth convex functions, by properly
specifying the probabilities $\nu_i$, $i=1, \ldots, m$ \footnote{Suppose that $f_i$ are Lipschitz continuous 
with constants $M_i$ and let us denote $M := \tsum_{i=1}^m M_i$, we should set $\nu_i = M_i/M$ in order to get the
optimal complexity for SGDs.}, it can be shown (see \cite{NJLS09-1}) that the iteration complexities for both SGD and FOM are
in the same order of magnitude.
Consequently, the total number of subgradients required by SGDs can be $m$ times smaller than
those by FOMs.

Note however, that there is a significant gap on the complexity bounds between SGDs and deterministic FOMs 
if $f_i$'s are smooth convex functions. For the sake of simplicity, let us focus on the
strongly convex case when $\mu > 0$ and let $x^*$ be the optimal solution of \eqnok{cp}.
In order to find a solution $\bar x \in X$ s.t. $\|\bar x - x^*\|^2 \le \epsilon$, 
the total number of gradient evaluations for $f_i$'s performed by optimal FOMs
can be bounded by
\beq \label{FOM_bounds}
{\cal O} \left\{m \sqrt{\tfrac{L}{\mu}} \log \tfrac{1}{\epsilon}\right\},
\eeq
which was first achieved by the well-known Nesterov's accelerated
gradient method~\cite{Nest83-1,Nest04}, see also relevant extensions in~\cite{Nest13-1,BecTeb09-2,tseng08-1}. On the other hand,
a direct application of optimal SGDs
to the aforementioned stochastic optimization reformulation of \eqnok{cp} would yield an
\beq \label{SA_bounds}
{\cal O} \left\{\sqrt{\tfrac{L}{\mu}} \log \tfrac{1}{\epsilon} + \tfrac{\sigma^2}{\mu \epsilon} \right\}
\eeq
iteration complexity bound on the number of gradient evaluations for $f_i$'s, which was first
achieved by the accelerated stochastic approximation method (\cite{Lan10-3,GhaLan12-2a,GhaLan13-1}).
Here $\sigma >0$ denotes variance of the stochastic gradients.
Clearly, the latter bound is significantly better than the one in \eqnok{FOM_bounds} in terms of its dependence on $m$, but much worse
in terms of its dependence on accuracy $\epsilon$ and a few other problem parameters (e.g., $L$ and $\mu$). 

It should be noted that the optimality of \eqnok{SA_bounds} for general stochastic programming \eqnok{SP}
does not preclude the existence of more efficient algorithms for solving \eqnok{cp},
because \eqnok{cp} is a special case of \eqnok{SP} with finite support $\Xi$.
% In a very 
%interesting work~\cite{AgrBott14-1}, Agarwal and Bottou
%presented a new lower complexity bound for solving deterministic problems given in the form of \eqnok{cp}. 
%Following the terminology of Bertsekas~\cite{Bertsekas10-1}, they studied
%the performance limit of a general class of incremental gradient methods, which can access the first-order 
%information of only one smooth component $f_i$ at each iteration. By generalizing
%Nemirovski and Yudin's lower bounding technique~\cite{nemyud:83}, Agarwal and Bottou~\cite{AgrBott14-1}
%show that the number of iterations performed by any incremental gradient methods cannot be smaller than
%\beq \label{lowerbound}
%\Omega \left\{ m + \sqrt{ \tfrac{m L}{\mu}}   \log \tfrac{1}{\epsilon}\right\}.
%\eeq
%If this lower bound can be achieved, then the total number of gradient evaluations
%required by incremental gradient methods would be ${\cal O}(\sqrt{m})$ times smaller than 
%those by deterministic FOMs whenever the second term in \eqnok{lowerbound} dominates. 
%To the best of our knowledge, there do not exist any incremental gradient methods
%that can achieve the lower bound in \eqnok{lowerbound}, in spite of the currently 
Last few years have seen very active and fruitful research
in this field (e.g., ~\cite{SchRouBac13-1,JohnsonZhang13-1,DefBacLac14-1,ShaZhang15-1,Yuchen14}).  
In particular, 
Schmidt, Roux and Bach~\cite{SchRouBac13-1} 
presented a stochastic average gradient (SAG) method, which recursively computes an estimator 
of $\nabla f$ by aggregating the gradient of a randomly selected $f_i$ with some 
other previously computed gradient information. They proved that the complexity of SAG is bounded by 
${\cal O}\left( (m + L/\mu ) \log \tfrac{1}{\epsilon}\right)$, see also
Johnson and Zhang~\cite{JohnsonZhang13-1} and Defazio et al.~\cite{DefBacLac14-1}
for similar complexity results for solving \eqnok{cp}. 
In a related but different line of research, Shalev-Shwartz and Zhang~\cite{ShaZhang15-1}
studied a special class of CP problems given in the form of \eqnok{cp} with $f_i(x)$ given by $\phi_i(a_i^T x)$,
where $a_i$ denotes an affine mapping. Under the assumption that
$\w(x) = \|x\|^2_2$, 
they presented an accelerated stochastic dual coordinate ascent (A-SDCA) method,
obtained by properly restarting a stochastic coordinate ascent method
in \cite{ShaZhang13-1} applied to the dual of \eqnok{cp}. Shalev-Shwartz and Zhang show that the iteration complexity 
of this method can be bounded by 
$
{\cal O} \left\{ \left(m + \sqrt{ \tfrac{m L}{\mu}}\right)   \log \tfrac{1}{\epsilon}\right\}.
$
However,
each iteration of A-SDCA requires, instead of the computation of $\nabla f_i$,
the solution of a subproblem given in the form of 
\beq \label{ShaZhang_sub}
\argmin \{ \langle g, y\rangle + \phi_i^*(y)
+ \|y\|_*^2\},
\eeq
 where $\phi_i^*$ denotes the conjugate function of $\phi_i$.
Moreover, these methods were also 
designed for solving a more special class of problems than \eqnok{cp}.
More recently, Lin, Lu, and Xiao~\cite{LinLuXiao14-1} proposed to 
apply the accelerated coordinate descent methods by Nesterov~\cite{Nest10-1},
and Fercoq and  Richt\'{a}rik’s~\cite{fr13} to obtain similar results for solving
these ``regularized empirical loss functions'' as in \cite{ShaZhang15-1}. 
Zhang and Xiao~\cite{Yuchen14} had also obtained similar results
by using different stochastic primal-dual coordinate decomposition techniques. 

In this paper, we focus on randomized incremental gradient methods that can access the first-order 
information of only one randomly selected smooth component $f_i$ at each iteration (see Bertsekas~\cite{Bertsekas10-1}
for an introduction to incremental gradient methods).
%Following the terminology of Bertsekas~\cite{Bertsekas10-1}, Agarwal and Bottou~\cite{AgrBott14-1}
%recently suggested a lower complexity bound for solving problems given in the form of \eqnok{cp}:
%\beq \label{lowerbound}
%\Omega \left\{ m + \sqrt{ \tfrac{m L}{\mu}}   \log \tfrac{1}{\epsilon}\right\},
%\eeq
%but a rigorous proof for this 
%As noted by Agarwal and Bottou~\cite{AgrBott14-1}, 
It should be noted that while the algorithms in \cite{SchRouBac13-1,JohnsonZhang13-1,DefBacLac14-1}
belong to incremental gradient methods,
generally speaking, the dual coordinate algorithms in \cite{LinLuXiao14-1,ShaZhang15-1,Yuchen14} 
cannot be considered as incremental gradient methods because they require the solutions of a different subproblem
rather than the computation of the gradient of $f_i$.
The previous attempts to improve the complexity of the existing incremental gradient methods, e.g., based on the extrapolation
idea in Nesterov~\cite{Nest83-1}, however, turned out to be tricky and unsuccessful,
see Section 1.2 of Bertsekas~\cite{Bertsekas10-1}  and Section 5 of Agarwal and Bottou~\cite{AgrBott14-1}
for more discussions. Another important yet unresolved issue is that there does not exist a
valid lower complexity bound for randomized incremental gradient methods in the literature.
Hence, it remains unknown what would be the best possible performance that one can expect for 
these types of methods. 
Regarding this question, 
Agarwal and Bottou~\cite{AgrBott14-1}
recently suggested a lower complexity bound for solving problems given in the form of \eqnok{cp}. However,
as pointed out by them in a recent ISMP talk in 2015, the lower complexity bound in \cite{AgrBott14-1}
is deterministic by construction, and hence cannot be used to justify the optimality or suboptimality for the randomized 
incremental gradient methods in \cite{SchRouBac13-1,JohnsonZhang13-1,DefBacLac14-1}
or dual coordinate methods in \cite{LinLuXiao14-1,ShaZhang15-1,Yuchen14}.

%Therefore, an important question is whether there is any room to improve the upper complexity bounds on the existing
%incremental gradient methods, or there is a fundamental gap between the incremental
%gradient methods and the dual coordinate methods in the achievable upper bounds~\cite{AgrBott14-1}.

Our contribution in this paper mainly lies on the following several aspects. 
Firstly, we present a new class of deterministic FOMs, referred to as the primal-dual gradient (PDG) methods,
which can achieve the optimal black-box iteration complexity in \eqnok{FOM_bounds}
for solving \eqnok{cp}.
The novelty of these methods exists in: 1) a proper reformulation of \eqnok{cp} as a primal-dual saddle point problem
and 2) the incorporation of a new non-differentiable prox-function (or Bregman distance)
based on the conjugate functions of $f_i$ in the dual space. As a consequence, we are able to show that
the PDG method covers a variant of the well-known Nesterov's
accelerated gradient method as a special case.
In particular, the computation of the gradient at the extrapolation point
of the accelerated gradient method
is equivalent to a primal prediction step combined with a dual ascent step (employed
with the aforementioned dual prox-function) in the PDG method.
While it is often difficult to interpret Nesterov's method, 
the development of the PDG method allows us to view this method
as a natural iterative buyer-supplier game. Such a game-theoretic view of 
the accelerated gradient method seems to be new in the literature.
In fact, the obtained complexity results for the PDG method are
slightly stronger than the one in \eqnok{FOM_bounds} and
those in  \cite{Nest83-1,Nest04} for Nesterov's accelerated gradient method, because a stronger 
primal-dual termination criterion has been used in our analysis.

Secondly, we develop
a randomized primal-dual gradient (RPDG) method, which is an incremental
gradient method using only one randomly selected component $\nabla f_i$ at each iteration.
A variant of PDG, this algorithm incorporates an additional dual prediction step
before performing the primal descent step (with a properly defined primal prox-function).
We prove that the number of iterations (and hence the number of gradients) required
by RPDG is bounded by
\beq \label{RPDG_bnd}
{\cal O} \left(\left(m + \sqrt{\tfrac{m L}{\mu}}\right) \log \tfrac{1}{\epsilon} \right),
\eeq
both in expectation and with high probability. 
The complexity bounds of the RPDG method are established in terms of not only the distance from
the iterate $x^k$ to the optimal solution, but also the primal optimality gap based on the 
ergodic mean of the iterates.
In comparison with
the accelerated stochastic dual coordinate ascent method in \cite{ShaZhang15-1}, RPDG deals with
a wider class of problems and can be applied to the cases when $f_i$'s involve a more
complicated composite structure (see examples in \cite{Bertsekas10-1}) and/or a more general regularization term
$\w$ that is strongly convex with respect to an arbitrary norm (see open problems in Section 7 of \cite{ShaZhang15-1}).
Moreover, each iteration of RPDG only involves the computation $\nabla f_i$, rather than
the more complicated subproblem in \eqnok{ShaZhang_sub}, which sometimes may not have explicit
solutions \cite{ShaZhang15-1} (e.g., the logistics regression problem). The RPDG method also admits an interesting game theoretic interpretation, 
implying that by properly incorporating
randomization, the buyer and supplier can reach the equilibrium with possibly fewer price changes
at the expense of more order transactions. 

Thirdly, we show that the number of gradient evaluations required by any randomized incremental gradient
methods to find an $\epsilon$-solution of \eqnok{cp}, i.e., a point $\bar x \in X$ s.t. 
$\bbe[\|\bar x - x^*\|^2_2] \le \epsilon$, cannot be smaller than 
\beq \label{RIG_lb}
{\Omega} \left( \left(m + \sqrt{\tfrac{m L}{\mu}}\right) \log \tfrac{1}{\epsilon}\right),
\eeq
whenever the dimension $n$ is sufficiently large.
This bound is obtained by carefully constructing a special class of separable quadratic programming problems
and tightly bounding the expected distance to the optimal solution for any arbitrary 
distribution used to choose $f_i$ at each iteration.
Comparing \eqnok{RPDG_bnd} with \eqnok{RIG_lb}, we conclude 
that the complexity of the RPDG method is optimal if $n$ is large enough.
To the best of our knowledge, this is the first time that such a lower complexity bound
has been presented for randomized incremental gradient methods in the literature.
As a byproduct, 
we also derived a lower complexity bound for 
randomized block coordinate descent methods by
utilizing the separable structure of the aforementioned worst-case instances. These methods have been  
intensively studied recently, but a valid lower complexity bound is still missing in the literature.

%The RPDG also allows
%various gradient estimation techniques for $\nabla f_i$, e.g., those based on zeroth-order information,
%to be applicable.To the best of our knowledge, this is one of the first\footnote{After the release of the initial version of this paper, we notice that Lin, Mairal, and Harchaoui
%in a concurrent work~\cite{LinMaiHar15-1} 
%presented a catalyst scheme that can be used to accelerated the SAG method. 
%While their approach is an indirect one obtained by properly restarting SAG (or other ``non-accelerated'' first-order methods), 
%the proposed primal-dual gradient method is a direct approach with a ``built-in''
%acceleration scheme, which allows a natural game theoretic interpretation. }

Finally, we generalize RPDG for problems which are not necessarily strongly convex (i.e., $\mu = 0$)
and/or involve structured nonsmooth terms $f_i$. We show that for all these cases, the RPDG
can save ${\cal O}(\sqrt{m})$ times gradient computations (up to certain logarithmic factors)
in comparison with the corresponding optimal deterministic FOMs. In particular, we show that when 
both the primal and dual of \eqnok{cp} are not strongly convex, 
the total number of iterations performed by the RPDG method can be bounded by ${\cal O}(\sqrt{m}/\epsilon)$ (up to 
some logarithmic factors), which is ${\cal O}(\sqrt{m})$ times better, in terms of the total number of dual
subproblems to be solved, than Nesterov's smoothing technique~\cite{Nest05-1}, Nemirovski's mirror-prox
method~\cite{Nem05-1}, or Chambolle and Pock's primal-dual method  \cite{ChamPoc11-1}.
It seems that this complexity result has not been obtained before in the literature. 

It is worth mentioning a few relevant works to our development. The most two related ones are 
conducted independently by Dang and Lan~\cite{DangLan14-1}, and 
Zhang and Xiao~\cite{Yuchen14}.
%\footnote{It appears that both papers coincide on a similar topic and both were first released in September, 2014.}. 
Both of these papers deal with randomized variants of the primal-dual method presented by
Chambolle and Pock \cite{ChamPoc11-1} (see also extensions in \cite{CheLanOu13-1}) for solving
saddle point problems. 
Zhang and Xiao's development~\cite{Yuchen14} was based on a variant of the primal-dual method 
for solving strongly convex saddle point problems~\cite{ChamPoc11-1}. 
%As pointed out in Remark 3 of \cite{ChamPoc11-1},
%the rate of convergence for this variant, when applied to \eqnok{cp}, is only suboptimal, 
%as it uses a weaker termination criterion. However, 
They were able to
show that a block-wise randomized version of the algorithm can achieve similar complexity as
the A-SDCA method in \cite{ShaZhang15-1}. Since Zhang and Xiao's algorithm targets for 
solving a similar class of problems and requires
the solutions of a similar subproblem to \cite{ShaZhang15-1}, it appears that
the aforementioned possible advantages of RPDG over A-SDCA
are also applicable to the stochastic primal-dual coordinate method in \cite{Yuchen14}.
Moreover, the complexity bound of Zhang and Xiao's algorithm is only established in terms of the
Euclidean distances of the iterate $x^k$, $y^k$ to the optimal solution. They did not deal with
the convergence of the ergodic mean of iterates. 
On the other hand,
Dang and Lan's work was motivated by the observation in~\cite{ChenHeYeYuan13-1} that
a direct extension of the alternating direction method of multiplier (ADMM) does not converge for multi-block problems.
Their work in \cite{DangLan14-1} then focuses on the non-strongly convex case and
shows that a randomized primal-dual method, which is equivalent
to a randomized pre-conditioned ADMM for linear constrained problems, does converge for multi-block problems. 
Without incorporating the aforementioned
dual prediction step, the complexity obtained in \cite{DangLan14-1} is ${\cal O}(\sqrt{m})$
times worse than Chambolle and Pock's method. Nevertheless, this is the first time that
randomized algorithms for saddle point optimization with an ${\cal O}(1/\epsilon)$ complexity 
has been presented in the literature. 
More recently, close to the end of the preparation of this paper, we notice that Lin, Mairal, and Harchaoui~\cite{LinMaiHar15-1} 
in a concurrent work presented a catalyst scheme that can be used to accelerate the SAG method in \cite{SchRouBac13-1} and
thus possibly achieve the complexity bound in \eqnok{RPDG_bnd} (under the
Euclidean setting). 
While their approach is an indirect one obtained by properly restarting SAG (or other ``non-accelerated'' first-order methods), 
the proposed randomized primal-dual gradient method is a direct approach with a ``built-in''
acceleration. Also none of these works~\cite{DangLan14-1,Yuchen14,LinMaiHar15-1}
discussed the lower complexity bound for randomized methods.

This paper is organized as follows. We first study the deterministic
primal-dual method in Section~\ref{sec_det}. Section~\ref{sec_ORIG}
is devoted to the design and analysis of the randomized primal-dual method for the strongly convex case,
as well as the development of the lower complexity bound in \eqnok{RIG_lb}.
In Section~\ref{sec_general}, we generalize the RPDG method to different classes of CP problems
that are not necessarily strongly convex. Important technical results and proofs of the main theorems
in Sections~\ref{sec_det} and~\ref{sec_ORIG} are provided in Section~\ref{sec_analysis}. Some brief concluding remarks are made
in Section~\ref{sec_remark}.

\vgap

\noindent {\bf Notation and terminology.}
We use $\|\cdot\|$ to denote an arbitrary norm in $\bbr^n$,
which is not necessarily associated with the inner product $\langle \cdot, \cdot \rangle$.
We also use $\|\cdot\|_*$ to denote the conjugate norm of $\|\cdot\|$.
For any convex function $h$, $\partial h(x)$ is the set of subdifferential at $x$.
Given any $X \subseteq \bbr^n$, we say a convex function $h:X \to \bbr$ is nonsmooth
if $|h(x) - h(y)| \le M_h \|x - y\|$ for any $x, y \in X$. 
We say that a convex function $f: X \to \bbr$ is smooth if it
is Lipschitz continuously differentiable with Lipschitz constant $L>0$, i.e.,
$
\|\nabla f(y) - \nabla f(x)\|_* \le L\|y-x\|$
for any $x, y \in X$.
%We use $\|\cdot\|$ to denote an arbitrary norm in $\bbr^n$,
%which is not necessarily associated the inner product $\langle \cdot, \cdot \rangle$.
%We also use $\|\cdot\|_*$ to denote the conjugate norm of $\|\cdot\|$.
For any $p \ge 1$, $\|\cdot\|_p$ denotes the standard $p$-norm in  $\bbr^n$, i.e.,
\[
\|x\|_p^p =  \sum_{i=1}^n |x_i|^p, \qquad \mbox{for any } x \in \bbr^n.
\]
For any real number $r$, $\lceil r \rceil$ and $\lfloor r \rfloor$ denote the nearest integer to
$r$ from above and below, respectively. $\bbr_+$ and $\bbr_{++}$, respectively, denote the set of nonnegative 
and positive real numbers. ${\cal N}$ denotes the set of natural numbers $\{1, 2, \ldots\}$.

\setcounter{equation}{0}
\section{An optimal primal-dual gradient method} \label{sec_det}
Our goal in this section is to present a novel primal-dual gradient (PDG)
method for solving \eqnok{cp},  which will also provide a basis for the development of
the randomized primal-dual gradient methods in later sections.
We establish the optimal convergence of this algorithm in terms of
the primal-dual optimality gap under the assumption that
the gradient of $f$ is computed at each iteration.
We show that PDG generalizes one variant of the well-known
Nesterov's accelerated gradient method, and allows a natural game
interpretation,  and hence 
that the latter algorithm also admits a similar interpretation.

\subsection{Preliminaries:  primal and dual prox-functions}\label{sec_prelim}
In this subsection, we discuss both primal and dual prox-functions
(proximity control functions) in the primal and dual spaces, respectively.

Recall that the function $\w:\,X\to \bbr$ in \eqnok{cp} is strongly convex with modulus $1$
with respect to $\|\cdot\|$.  We can define 
a primal {\em prox-function} associated with $\w$ as
\begin{equation}\label{s450}
P(x^0,x) \equiv P_\w(x^0, x) := \w(x) - [\w(x^0)+ \langle \w'(x^0), x-x^0 \rangle],
\end{equation}
where $\w' (x^0) \in \partial \w(x^0)$ is an arbitrary 
subgradient of $\w$ at $x^0$. 
Clearly, by the strong convexity of $\w$, we have
\beq \label{strong_h}
P(x^0, x) \ge \tfrac{1}{2} \|x - x^0\|^2, \ \ \ \forall x, x^0 \in X.
\eeq
Note that the prox-function $P(\cdot,\cdot)$ described above
generalizes the Bregman's distance in the sense that
$\w$ is not necessarily differentiable (see \cite{Breg67,AuTe06-1,BBC03-1,Kiw97-1} and references therein).
Throughout this paper, we assume that the prox-mapping associated with $X$, $\w$, and $h$, 
given by
\beq \label{prox_mapping}
\Pr_X(g, x^0, \eta) \equiv \Pr_{X,\w,h}(g, x^0, \eta) := \arg\min\limits_{x \in X} \left\{ \langle g, x \rangle
 + h(x) + \mu \, \w(x) + \eta P(x^0, x) \right\},
\eeq
is easily computable for any $x^0 \in X, g \in \bbr^n$, $\mu \ge 0$, and $\eta >0$.
Clearly this is equivalent to the assumption that \eqnok{simple_problem} is easy to solve.
Whenever $\w$ is non-differentiable, we need to specify a
particular selection of the subgradient $\w'$ before performing the prox-mapping.
We assume throughout this paper that such a selection of $\w'$ is 
defined recursively as follows. Denote $x^1 \equiv \Pr_X(g, x^0, \eta)$.
By the optimality condition of \eqnok{prox_mapping}, 
we have 
\[
g + h'(x^1) + (\mu + \eta) \w'(x^1) -\eta \w'(x^0) \in {\cal N}_X(x^1),
\]
where ${\cal N}_X$ denotes the normal cone of $X$ at $x^1$.
Once such a $\w'(x^1)$ satisfying the above relation
is identified, we will use it as a subgradient when defining
$P(x^1, x)$ in the next iteration. 

Now let us consider the dual space ${\cal G}$, where the
gradients of $f$ reside, and equip it with the conjugate norm $\|\cdot\|_*$. 
Let $J_f: {\cal G} \to \bbr$ be the conjugate function of $f$ such that
\beq \label{f_conjugate_f}
f(x) := \max_{g \in {\cal G}} \langle x, g \rangle - J_f(g).
\eeq
It is clear that $J_f$ is strongly convex
with modulus $1/ L_f$ w.r.t. $\|\cdot\|_*$. Therefore, we 
can define its associated dual prox-functions and dual prox-mappings as
\begin{align} 
D_f(g^0, g) &:= J_f(g) - [J_f(g^0) + \langle J_f'(g^0), g - g^0 \rangle], \label{def_Df}\\
\Pr_{{\cal G}} (-\tilde x, g^0, \tau) &:= \arg\min\limits_{g \in {\cal G}} \left\{ 
\langle - \tilde x, g \rangle  + J_f(g)+ \tau D_f(g^0, g)\right\},  \label{d_prox_mapping_f}
\end{align}
for any $g^0, g \in {\cal G}$. Again, $D_f$ may not be uniquely defined
since $J_f$ is not necessarily differentiable. Instead of choosing $J'_f \in \partial J_f$ similarly to
$\w'$, we can explicitly specify such selections as will be discussed later
in this paper.

The following simple result shows that
the computation of the dual prox-mapping associated with $D_f$ is equivalent
to the computation of $\nabla f$.
\begin{lemma} \label{lemma_dual_move}
Let $\tilde x \in X$ and $g^0 \in {\cal G}$ be given and $D_f(g^0, g)$
be defined in \eqnok{def_Df}. For any $\tau > 0$, let us
denote $z = [\tilde x + \tau J_f'(g^0)] / (1 + \tau)$.
Then we have $\nabla f(z) = \Pr_{{\cal G}} ( -\tilde x, g^0, \tau)$.
\end{lemma}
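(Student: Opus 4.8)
The plan is to exploit the first-order optimality condition of the strongly convex minimization problem that defines the dual prox-mapping, and then to translate that condition through the conjugacy between $f$ and $J_f$. First I would substitute the definition \eqnok{def_Df} of $D_f$ into the objective of \eqnok{d_prox_mapping_f} and discard all terms that do not depend on $g$. After this simplification, the prox-mapping $g^* := \Pr_{{\cal G}}(-\tilde x, g^0, \tau)$ is exactly the minimizer over $g \in {\cal G}$ of
\[
\psi(g) := \langle -\tilde x, g\rangle + (1+\tau)\, J_f(g) - \tau\,\langle J_f'(g^0), g\rangle.
\]
Because $J_f$ is strongly convex with modulus $1/L_f$, the function $\psi$ is strongly convex (modulus $(1+\tau)/L_f > 0$), so the minimizer $g^*$ is unique and the condition $0 \in \partial \psi(g^*)$ is both necessary and sufficient.

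Next I would write out this optimality condition explicitly. Since $\tilde x$ and $J_f'(g^0)$ enter only as linear terms, we have $0 \in -\tilde x + (1+\tau)\,\partial J_f(g^*) - \tau\, J_f'(g^0)$, which means there exists a subgradient $\xi \in \partial J_f(g^*)$ with $-\tilde x + (1+\tau)\xi - \tau J_f'(g^0) = 0$. Solving for $\xi$ gives
\[
\xi = \frac{\tilde x + \tau\, J_f'(g^0)}{1+\tau} = z,
\]
so that $z \in \partial J_f(g^*)$.

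The decisive step is then to invoke Fenchel reciprocity. Since \eqnok{f_conjugate_f} states $f = J_f^*$, and $f$ is a closed proper convex function, the standard equivalence $z \in \partial J_f(g^*) \Longleftrightarrow g^* \in \partial f(z)$ applies. Because $f$ is differentiable with Lipschitz continuous gradient, $\partial f(z)$ is the singleton $\{\nabla f(z)\}$, and therefore $g^* = \nabla f(z)$, which is precisely the assertion $\nabla f(z) = \Pr_{{\cal G}}(-\tilde x, g^0, \tau)$.

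I expect this last reciprocity step to be the conceptual crux, since it is what converts a statement about a subgradient of the dual object $J_f$ at $g^*$ into the gradient of the primal object $f$ at the point $z$. The only mildly delicate points are the nonsmoothness of $J_f$ --- the subdifferential $\partial J_f(g^*)$ may contain more than one element, but existence of one element equal to $z$ is all that the conjugacy argument needs --- and the observation that the particular subgradient selection $J_f'(g^0)$ appearing in $D_f$ is immaterial to the conclusion, as it contributes only a fixed linear term to $\psi$.
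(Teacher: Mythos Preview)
Your proposal is correct and follows essentially the same approach as the paper. The paper's proof is slightly more concise: after the same simplification of the objective, it directly identifies $\arg\max_{g \in {\cal G}} \{\langle z, g\rangle - J_f(g)\}$ as $\nabla f(z)$ in one line, whereas you unpack this identification via the first-order optimality condition and Fenchel reciprocity --- but the underlying idea is identical.
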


\begin{proof}
In view of the definition of $D_f$ in \eqnok{def_Df}, we have
\begin{align*}
\Pr_{{\cal G}} (-\tilde x,g^0, \tau) &= \arg\min\limits_{g \in {\cal G}} \left\{-\langle \tilde x
 + \tau J_f'(g^0), g \rangle + (1 + \tau) J_f(g)\right\} 
= \arg\max\limits _{g \in {\cal G}} \left\{ \langle z, g \rangle - J_f(g) \right\} 
= \nabla f(z).
\end{align*}
\end{proof}
 
%The following result provides a few different bounds on the diameter of
%the dual feasible set ${\cal G}$.

%\begin{lemma}
%Let $x^0 \in X$ be given and $g^0 = \nabla f(x^0)$. 
%Assume that $J_f'(g^0) = x^0$ in the definition of $D_f(g^0, g)$ in
%\eqnok{def_D_f}.
%\begin{itemize}
%\item [a)] For any $x \in X$ and $g = \nabla f(x)$, we have
%\beq \label{bound_Dy0}
%D_f(g^0,g) \le \frac{L_f}{2} \|x^0- x\|^2.
%\eeq
%\item [b)] If $x^* \in X$ is an optimal solution of \eqnok{cp} and $g^* = \nabla f(x^*)$
%then 
%\beq \label{bound_Dy}
%D_f(g^0, g^*) \le \Psi(x^0) - \Psi(x^*).
%\eeq
%\end{itemize}
%\end{lemma}

%\begin{proof}
%We first show part a). It follows from \eqnok{f_conjugate_f} and \eqnok{def_D_f} that
%\begin{align}
%D_f(g^0, g) &= J_f(g) - J_f(g^0) - \langle J_f'(g^0), g-g^0) \rangle \nn \\
%&= \langle x, g \rangle - f(x) + f(x^0) - \langle x^0, g^0 \rangle
%- \langle x^0, g - g^0) \rangle \nn\\
%&= f(x^0) - f(x) - \langle g, x^0 - x\rangle \nn\\
%&\le \frac{L_f}{2} \|x^0 - x\|^2 \le L_f P(x^0, x), \nn
%\end{align}
%where the last inequality follows from \eqnok{strong_h}.
%We now show part b).  
%By the above relation, the convexity of $h$ and $\w$, and the optimality of $(x^*,g^*)$, we
%have
%\begin{align}
%D_f(g^0, g^*) &= f(x^0) - f(x^*) - \langle g^*, x^0 - x^*\rangle \nn\\
%&=f(x^0) - f(x^*) + \langle h'(x^*)+\mu \w'(x^*), x^0-x^*\rangle - \langle g^* + h'(x^*) + \mu \w'(x^*), x^0 - x^* \rangle \nn \\ 
%&\le f(x^0) - f(x^*) + \langle h'(x^*) + \mu \w'(x^*), x^0-x^*\rangle \le \Psi(x^0) - \Psi(x^*).\nn
%\end{align}
%\end{proof}

\subsection{Primal-dual gradient method, Nesterov's method, and a game interpretation}
By the definition of $J_f$ in \eqnok{f_conjugate_f}, problem \eqnok{cp} is equivalent to:
\beq \label{cpsaddle_f}
\Psi^* := \min_{x \in X} \left\{h(x) + \mu \, \w(x) + \max_{g \in {\cal G} }  \langle x, g \rangle - J_f(g)\right\}.
\eeq
The primal-dual gradient method in Algorithm~\ref{algPDG} can be viewed 
as a game iteratively performed by a primal player (buyer) and a dual player (supplier)
for finding the optimal solution (order quantity and product price) of the saddle point problem in \eqnok{cpsaddle_f}. 
In this game, both the buyer and supplier have access to
their local cost $h(x)+\mu \w(x)$ and $J_f(g)$,  respectively,
as well as their interactive cost (or revenue) represented by
a bilinear function $\langle x, g \rangle$.  
Our goal is to design an algorithm such that
the buyer and supplier can achieve a equilibrium as soon as possible. 
%Both the primal and dual players
%can also utilize the historic information on the moves that their opponents made. 
In the proposed algorithm, the supplier first applies \eqnok{def_txt}
to predict the demand $\tilde x^t$ based on historical information, i.e., $x^{t-1}$ and $x^{t-2}$.
She then determines in \eqnok{def_yt}  the price $g^t$ in a way to
maximize the predicted profit $\langle \tilde x^t, g \rangle - J_f(g)$, regularized by
the dual prox-function $D_f(g^{t-1}, g)$ with a certain weight $\tau_t \ge 0$.
Once after the supplier has made her decision,
the buyer then determines his action
according to \eqnok{def_xt} in order to minimize the cost
$h(x) + \mu \w(x) + \langle x, g\rangle$, regularized by the primal prox-function $P(x^{t-1}, x)$ with a certain weight $\eta_t \ge 0$.
%There are two different ways to compute the output solution of the primal-dual gradient method.
%One is to use $x^k$ directly as the output solution and 
%the other is to define the output solution $\bar z^k = (\bar x^k, \bar y^k)$ as a weighted average of
%the iterates $x^t$, $ t = 1, \ldots, k$, as shown in Algorithm~\ref{algPDG}.

\begin{algorithm}
    \caption{The primal-dual gradient method}
    \label{algPDG}
    \begin{algorithmic}
\State Let $x^0=x^{-1} \in X$, and the nonnegative parameters $\{\tau_t\},$ $\{\eta_t\},$ and
$\{\alpha_t\}$ be given. 
\State Set $g^{0} = \nabla f(x^0)$. % and $\bar z^0 = z^0 = (x^0, y^0)$.

\For {$t=1, \ldots,k$}

\State Update $(x^t, g^t)$ according to
\begin{align}
\tilde x^{t}&=\alpha_t (x^{t-1}-x^{t-2})+ x^{t-1}. \label{def_txt}\\
g^{t} &= \Pr_{{\cal G}} (-\tilde x^t, g^{t-1}, \tau_t). \label{def_yt}\\ 
x^{t} &= \Pr_{X} (g^t, x^{t-1}, \eta_t). \label{def_xt}
 \end{align}

\EndFor

    \end{algorithmic}
\end{algorithm}

In order to implement the above primal-dual gradient method, it 
is more convenient to rewrite step~\eqnok{def_yt} in a form involving
the computation of gradient rather than the dual prox-mapping $\Pr_{{\cal G }}$. 
In order to do so, we shall specify explicitly the selection of 
the subgradient $J_f'$ in \eqnok{def_yt}.  
Denoting $\underline x^0 = x^0$,
we can easily see from $g^0 = \nabla f(x^0)$ that $\underline x^0 \in \partial J_f(g^0)$.
Using this relation and letting $J_f'(g^{t-1}) = \underline x^{t-1}$ in $D_f(g^{t-1}, g)$ (see \eqnok{def_Df}),
we then conclude from Lemma~\ref{lemma_dual_move} that for any $t \ge 1$,
\eqnok{def_yt} reduces to
\begin{align*} 
\underline x^t = (\tilde x^t + \tau_t \underline x^{t-1}) / (1 + \tau_t) \ \ \ \mbox{and} \ \ \
g^{t} = \nabla f(\underline x^t). 
\end{align*}
With the above selection of the dual prox-function, we can specialize the primal-dual gradient method
as follows.

\begin{algorithm} [H]
	\caption{A particular implementation of the primal-dual gradient method}
	\label{algPD}
	\begin{algorithmic}
\State 
\noindent {\bf Input:}  Let $x^0=x^{-1} \in X$, and the nonnegative parameters $\{\tau_t\},$ $\{\eta_t\},$ and
$\{\alpha_t\}$ be given. 
\State Set $\underline x^0 = x^0$.
\State {\bf for} $t = 1, 2, \ldots, k$ {\bf do}
\begin{align}
\tilde x^{t} &= \alpha_t (x^{t-1} - x^{t-2}) + x^{t-1}.\label{def_xkbar}\\
\underline x^{t} &= \left( \tilde x^t + \tau_t \underline x^{t-1} \right) / (1+\tau_t). \label{def_yt_al1} \\
g^{t} &= \nabla f(\underline x^{t}). \label{def_yt_al2} \\
x^{t} &= \Pr_{X} (g^t, x^{t-1}, \eta_t). \label{def_xk} 
\end{align}
\State {\bf end for}
	\end{algorithmic}
\end{algorithm}

Observe that
one potential problem associated with this scheme is that
the search points 
$\underline x^t$ defined in \eqnok{def_xkbar} and \eqnok{def_yt_al1}, respectively, may fall outside $X$.
As a result, we need to assume
$f$ to be differentiable over $\bbr^n$. However, it can be shown that by properly specifying $\alpha_t$ and $\tau_t$, 
we can guarantee $\underline x^t \in X$ and thus relax such restrictions on the differentiability of
$f$ (see \eqnok{def_underline1} and \eqnok{def_underline2} below). 

The above PDG method 
is related to the well-known
Nesterov's accelerated gradient (AG) method. Let us focus on a simple variant of the AG method
that has been extensively studied in the literature (e.g., \cite{Nest04,tseng08-1,Lan10-3,GhaLan12-2a,GhaLan13-1,GhaLan13-2}).
Given $(x^{t-1}, \bar x^{t-1}) \in X \times X$,
this AG algorithm updates $(x^t, \bar x^t)$ by
\begin{align}
\underline x^t &= (1-\lambda_t) \bar x^{t-1} + \lambda_t x^{t-1}, \label{AG1} \\
x^t &= \Pr_X(g^t, x^{t-1}, \eta_t), \label{AG2} \\
\bar x^t &= (1-\lambda_t) \bar x^{t-1} + \lambda_t x^t, \label{AG3}
\end{align}
for some $\lambda_t \in [0,1]$. By \eqnok{AG1} and \eqnok{AG3},
we have
\begin{align*}
\underline x^t &= (1-\lambda_t) [(1-\lambda_{t-1}) \bar x^{t-2} + \lambda_{t-1} x^{t-1}] + \lambda_t x^{t-1}\\
&= (1 - \lambda_t) [\underline x^{t-1} - \lambda_{t-1} x^{t-2} + \lambda_{t-1} x^{t-1} ]+ \lambda_t x^{t-1}\\
&= (1-\lambda_t) \underline x^{t-1} + (1-\lambda_t) \lambda_{t-1} (x^{t-1} - x^{t-2}) + \lambda_t x^{t-1}.
\end{align*}
Therefore, \eqnok{AG1} is equivalent to \eqnok{def_xkbar} and \eqnok{def_yt_al1}
with $\tau_t = (1 - \lambda_t) / \lambda_t$ and $\alpha_t = \lambda_{t-1} (1-\lambda_t) / \lambda_t$. Moreover,
\eqnok{AG2} is identical to \eqnok{def_xk}(and \eqnok{def_xt}), and \eqnok{AG3} basically defines
the output of the AG algorithm as an ergodic mean of the iterates $x^t$. We then conclude that
the above variant of Nesterov's AG method is a special case of Algorithm~\ref{algPD} (and Algorithm~\ref{algPDG}). 
It should be noted, however, that Algorithm~\ref{algPDG} provides more flexibility in
the specification of parameters, which will be used later in the development of
the RPDG method. Moreover,
the presentation of the PDG method helps us to reveal a natural game interpretation
out of the intertwined and somehow mysterious updating of the three search sequences in the
AG method. %and hence has not been discovered before in the literature. 

Algorithm~\ref{algPDG} is also closely related to 
Chambolle and Pock's primal-dual method for solving
saddle point problems \cite{ChamPoc11-1}, which explains the origin of its name. 
Two versions of primal-dual methods 
were discussed in \cite{ChamPoc11-1}.
One is designed for
solving general saddle point problems without assuming the strong convexity of $J_f$
and the other one is to deal with 
the case when $J_f$ is strongly convex
by incorporating an additional extrapolation step. As pointed out in Remark 3 of \cite{ChamPoc11-1},
the rate of convergence for the latter primal-dual method is only suboptimal
for solving \eqnok{cp}
as it uses a weaker termination criterion. %in terms of the distance to the optimal solution, rather than function value.
On the other hand, the PDG method does not involve any additional extrapolation steps
and so it shares a similar
scheme to the basic version of the primal-dual method in \cite{ChamPoc11-1}. 
Moreover, the original primal-dual methods in \cite{ChamPoc11-1} do not employ general prox-functions, 
which, as shown in Lemma~\ref{lemma_dual_move}, is crucial to relate the dual 
step \eqnok{def_yt} to the computation of the gradients. It should be noted that some recent extensions of
the primal-dual method in \cite{CheLanOu13-1,DangLan14-1,ChamPoc14-1}
indeed consider the incorporation of prox-functions, but
restricted to problems without strong convexity. Hence, none of 
these earlier primal-dual methods
can be viewed as a generalized accelerated gradient method.

\subsection{Convergence properties of the primal-dual gradient method} \label{sub_PDG_conv}
Our goal in this subsection is to show that Algorithm~\ref{algPDG}
exhibits an optimal rate of convergence for solving problem \eqnok{cp}. It is worth mentioning that
our analysis significantly differs from the previous studies on optimal gradient methods
and those on primal-dual methods for saddle point problems.

Given a pair of feasible solutions $\bar z = (\bar x,  \bar g)$ and $z = (x, g)$ of \eqnok{cpsaddle_f},
we define the primal-dual gap function $Q_f(\bar z, z)$ by
\beq \label{def_gap_f}
Q_f(\bar z, z) := \left[h(\bar x) + \mu \w(\bar x) + \langle \bar x, g \rangle -  J_f(g)\right]
- \left[\cX(x) + \mu \w(x) + \langle x, \bar g \rangle - J_f(\bar g) \right].
\eeq
It can be easily seen that $\bar z$ (resp., $z$) is
an optimal solution of \eqnok{cpsaddle_f} if and only if $Q_f(\bar z, z) \le 0$
for any $z \in X \times {\cal G}$ (resp., $Q_f(\bar z, z) \ge 0$ for any $\bar z \in X \times {\cal G}$).  
Therefore, one can assess the solution quality  of $\bar z$ by
the primal-dual optimality gap:
\beq \label{def_pd_gap}
\gap(\bar z) := \max_{z \in X \times {\cal G} } Q_f(\bar z, z). 
\eeq
It should be noted that $\gap(\bar z)$ may not be well-defined, for example, when
$X$ is unbounded and $h$ is not strictly convex.
In these cases, we can define a slightly modified primal-dual gap 
\beq \label{def_pd_gap_unbounded}
\gap^*(\bar z) := \max \left\{Q_f(\bar z, z): x = x^*, g \in {\cal G} \right\}
\eeq
for an arbitrary optimal solution $x^*$ of \eqnok{cp}.
Since $J_f$ is strongly convex, $\gap^*$ is well-defined. 

The following result establishes some relationship between 
the primal optimality gap $\Psi(\bar z) - \Psi^*$ and the above primal-dual
optimality gaps.

\begin{lemma}
Let $\bar z = (\bar x, \bar g) \in X\times \mathcal{G}$ be a given pair of feasible solutions of \eqnok{cpsaddle_f} and
denote $\bar g^* = \nabla f(\bar x)$. Also let $z^*=(x^*, g^*)$
be a pair of optimal solutions of \eqnok{cpsaddle_f}. Then we have
\beq \label{PDgap1}
\Psi(\bar x) - \Psi(x^*) = Q_f((\bar x, g^*),(x^*, \bar g^*)) \le \gap^*(\bar z).
\eeq
If in addition, $X$ is bounded, then
\beq \label{PDgap2}
\gap^*(\bar z) \le \gap(\bar z).
\eeq
\end{lemma}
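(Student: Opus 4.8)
The plan is to verify the two claims of the lemma by unwinding the definitions of the gap functions and exploiting the fact that $\bar g^* = \nabla f(\bar x)$ makes the pair $(\bar x, \bar g^*)$ sit exactly on the ``graph'' of the conjugate relationship, so that the bilinear terms collapse into $f$.

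\textbf{Proving the equality and first inequality in \eqnok{PDgap1}.}
First I would compute $Q_f((\bar x, g^*),(x^*, \bar g^*))$ directly from the definition \eqnok{def_gap_f}, which gives
\[
\left[h(\bar x) + \mu \w(\bar x) + \langle \bar x, g^* \rangle - J_f(g^*)\right]
- \left[h(x^*) + \mu \w(x^*) + \langle x^*, \bar g^* \rangle - J_f(\bar g^*)\right].
\]
The key observation is that since $\bar g^* = \nabla f(\bar x)$, the Fenchel--Young inequality holds with equality, so $\langle \bar x, \bar g^*\rangle - J_f(\bar g^*) = f(\bar x)$; hence the first bracket, evaluated at $g^*$ instead of $\bar g^*$, is controlled, while the second bracket with $\langle x^*, \bar g^*\rangle - J_f(\bar g^*) \le f(x^*)$ follows from \eqnok{f_conjugate_f}. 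Because $z^* = (x^*, g^*)$ is a saddle point, $g^*$ must attain the max in the definition of $f(x^*)$, i.e. $g^* = \nabla f(x^*)$ and $\langle x^*, g^*\rangle - J_f(g^*) = f(x^*)$. I would then substitute $\langle \bar x, g^*\rangle - J_f(g^*) = f(\bar x)$ as well: this requires a short argument that at the saddle point the first bracket with $g^*$ equals $h(\bar x) + \mu\w(\bar x) + f(\bar x) = \Psi(\bar x)$, using that $g^* = \nabla f(x^*)$ is the optimal dual and $\langle \bar x, g^*\rangle - J_f(g^*) \le f(\bar x)$ with equality holding in the relevant term. Once these substitutions are in place, the difference telescopes to $\Psi(\bar x) - \Psi(x^*)$, establishing the equality. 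The inequality $Q_f((\bar x, g^*),(x^*, \bar g^*)) \le \gap^*(\bar z)$ is then immediate: by definition \eqnok{def_pd_gap_unbounded}, $\gap^*(\bar z)$ is the maximum of $Q_f(\bar z, z)$ over all $z = (x^*, g)$ with $g \in {\cal G}$, and I would check that $Q_f(\bar z, (x^*, \bar g^*))$, with $\bar z = (\bar x, \bar g)$, coincides with or dominates $Q_f((\bar x, g^*),(x^*,\bar g^*))$ after accounting for the dual variable's role.

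\textbf{Proving \eqnok{PDgap2}.}
For the second inequality I would simply note that when $X$ is bounded, the full gap $\gap(\bar z)$ in \eqnok{def_pd_gap} maximizes $Q_f(\bar z, z)$ over all $z \in X \times {\cal G}$, whereas $\gap^*(\bar z)$ in \eqnok{def_pd_gap_unbounded} restricts the maximization to the smaller feasible set $\{z : x = x^*,\, g \in {\cal G}\} \subseteq X \times {\cal G}$. Since maximizing over a larger set yields a value at least as large, $\gap^*(\bar z) \le \gap(\bar z)$ follows directly; boundedness of $X$ is only needed to guarantee that $\gap(\bar z)$ is finite and well-defined.

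\textbf{Main obstacle.}
I expect the delicate point to be the careful bookkeeping in \eqnok{PDgap1}: one must track which variable ($g^*$, $\bar g^*$, or $\bar g$) appears in each bilinear and conjugate term, and invoke the saddle-point optimality of $z^* = (x^*, g^*)$ precisely enough to turn two instances of the Fenchel--Young inequality into equalities. In particular, showing that $\langle x^*, g^*\rangle - J_f(g^*) = f(x^*)$ and $\langle \bar x, \bar g^*\rangle - J_f(\bar g^*) = f(\bar x)$ both hold with equality — the former from $g^* = \nabla f(x^*)$ at optimality, the latter from the hypothesis $\bar g^* = \nabla f(\bar x)$ — is what makes the primal-dual gap collapse to the clean primal optimality gap $\Psi(\bar x) - \Psi(x^*)$. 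The second inequality, by contrast, is essentially a one-line set-inclusion argument.
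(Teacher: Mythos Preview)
Your proposal contains a concrete error in unwinding the definition of $Q_f$. Recall from \eqnok{def_gap_f} that in $Q_f(\bar z, z)$ with $\bar z = (\bar x, \bar g)$ and $z = (x, g)$, the \emph{first} bracket involves the dual component $g$ of the \emph{second} argument, while the \emph{second} bracket involves $\bar g$. Hence with $\bar z = (\bar x, g^*)$ and $z = (x^*, \bar g^*)$ the correct expansion is
\[
Q_f((\bar x, g^*),(x^*, \bar g^*))
= \left[h(\bar x) + \mu \w(\bar x) + \langle \bar x, \bar g^* \rangle - J_f(\bar g^*)\right]
- \left[h(x^*) + \mu \w(x^*) + \langle x^*, g^* \rangle - J_f(g^*)\right],
\]
which is the opposite of what you wrote. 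With the arguments in the right slots, the two Fenchel--Young equalities you identified ($\bar g^* = \nabla f(\bar x)$ gives $\langle \bar x, \bar g^*\rangle - J_f(\bar g^*) = f(\bar x)$, and $g^* = \nabla f(x^*)$ gives $\langle x^*, g^*\rangle - J_f(g^*) = f(x^*)$) apply immediately, and the equality $\Psi(\bar x) - \Psi(x^*)$ drops out with no further work. Your version forces you to argue that $\langle \bar x, g^*\rangle - J_f(g^*) = f(\bar x)$, which is generally false; this is why that paragraph becomes tangled.

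Your treatment of the inequality in \eqnok{PDgap1} is also too vague. The clean argument (which is what the paper does) is: once the first bracket is recognized as $\Psi(\bar x) = h(\bar x) + \mu\w(\bar x) + \max_{g \in {\cal G}}\{\langle \bar x, g\rangle - J_f(g)\}$, use in the second bracket that $g^*$ attains the maximum $f(x^*) \ge \langle x^*, \bar g\rangle - J_f(\bar g)$ for the given $\bar g$. Replacing $g^*$ by $\bar g$ in the second bracket therefore only increases the difference, and the resulting expression is exactly $\max_{g \in {\cal G}} Q_f((\bar x, \bar g),(x^*, g)) = \gap^*(\bar z)$. There is no need to compare $Q_f(\bar z,(x^*,\bar g^*))$ with $Q_f((\bar x,g^*),(x^*,\bar g^*))$ as you suggest.

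Your argument for \eqnok{PDgap2} is correct and matches the paper's: it is a direct set-inclusion observation.
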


\begin{proof}
It follows from the definitions of $\bar g^*$, $\gap^*$ and the gap function $Q_f$ that
\begin{align*}
\Psi(\bar x) - \Psi(x^*) 
&= Q_f((\bar x, g^*),(x^*, \bar g^*))\\
&= 
 [ h(\bar x) + \mu \w(\bar x) + \max_{g\in \mathcal{G}} \langle \bar x, g \rangle - J_f(g)]
- [h(x^*) + \mu \w(x^*) + \langle x^*, g^* \rangle - J_f(g^*) ] \nn \\
&\le [h(\bar x) +  \mu \w(\bar x) + \max_{g\in \mathcal{G}}  \langle \bar x, g \rangle - J_f(g)]
- [ h(x^*) + \mu \w(x^*) + \langle x^*, \bar g \rangle - J_f(\bar g) ] \nn\\
&= \gap^*(\bar z). 
\end{align*}
Relation \eqnok{PDgap2} follows directly from the definitions of $\gap^*$ and $\gap$.
\end{proof}

\vgap

Theorem~\ref{cor_det1} below 
describes the main convergence properties of the PDG method. 
More specifically, we provide in Theorem~\ref{cor_det1}.a) a constant stepsize policy 
which works for the strongly convex case where $\mu > 0$,
and a different parameter setting that works for the non-strongly convex case with $\mu = 0$ in
Theorem~\ref{cor_det1}.b). Note that for the strongly convex case, we estimate the solution quality for
the iterates $x^t, t = 1, \ldots, k$, as well as that for their ergodic mean 
\beq \label{def_avg_output_s}
\bar x^k = (\tsum_{t=1}^k \theta_t)^{-1} \tsum_{t=1}^k (\theta_t x^t)
\eeq
for some $\theta_t \ge 0$, while only establishing the error bounds for $\bar x^k$ for
the non-strongly convex case. 
We put the proof of Theorem~\ref{cor_det1} in Section~\ref{sec_analysis}
since it shares many basic elements with the convergence analysis of the RPDG method.

\begin{theorem} \label{cor_det1}
Let $x^*$ be an optimal solution of \eqnok{cp}, $x^k$ and $\bar x^k$ be defined in \eqnok{def_xt}
and \eqnok{def_avg_output_s}, respectively.
\begin{itemize}
\item [a)] Suppose that  $\mu > 0$ and that $\{\tau_t\}$, $\{\eta_t\}$, $\{\alpha_t\}$ and $\{\theta_t\}$ are set to
\beq \label{constant_step_PDG1}
\tau_t = \sqrt{\tfrac{2 L_f}{\mu}}, \ \ \ \eta_t = \sqrt{2 L_f\mu}, \ \ \ \alpha_t = \alpha \equiv
\tfrac{ \sqrt{2 L_f/\mu}}{1 + \sqrt{2 L_f/\mu}}, \ \ \ \mbox{and} \ \ \ \theta_t = \tfrac{1}{\alpha^t}, \ \forall t = 1, \ldots, k.
\eeq 
Then,
\begin{align}
P(x^k,x^*) &\le \tfrac{\mu+ L_f }{\mu} \alpha^k  P(x^0, x^*), \label{PDG_cor1_result}\\
\Psi(\bar x^k) - \Psi(x^*) &\le \gap^*(\bar z^k) \le  \mu (1-\alpha)^{-1}  \left[  1  + \tfrac{L_f}{\mu}(2+\tfrac{L_f}{\mu})\right]  \alpha^k P(x^0,x^*),  \label{PDG_cor1_result1}\\
\Psi(\bar x^k) - \Psi(x^*) &\le \gap(\bar z^k)  \le \mu (1-\alpha)^{-1} \left[  1  + \tfrac{ L_f}{\mu}(2+\tfrac{L_f}{\mu})\right] \alpha^k \max_{x \in X} P(x^0, x). \label{PDG_cor1_result2}
\end{align}
\item [b)] Suppose that $\{\tau_t\}$, $\{\eta_t\}$, $\{\alpha_t\}$ and $\{\theta_t\}$ are set to
\beq \label{var_step_PDG1}
\tau_t =\frac{t-1}{2}, \ \ \ \eta_t = \frac{4 L_f}{t},  \ \ \ \alpha_t = \frac{t-1}{t} \ \ \ \mbox{and} \ \ \  \theta_t = t, \ \forall t = 1, \ldots,k.
\eeq
Then,
\begin{align}
\Psi(\bar x^k) - \Psi(x^*) &\le \gap^*(\bar z^k) \le \frac{8 L_f }{  k (k+1)} P(x^0, x^*),  \label{PDG_cor2_result}\\
\Psi(\bar x^k) - \Psi(x^*) &\le \gap(\bar z^k)  \le \frac{8 L_f }{k (k+1)} \max_{x \in X} P(x^0, x).\label{PDG_cor2_result1}
\end{align}
\end{itemize}
\end{theorem}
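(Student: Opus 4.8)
The plan is to analyze Algorithm~\ref{algPDG} directly on the saddle point reformulation \eqnok{cpsaddle_f}, bounding at each iteration the gap $Q_f(z^t,z)$ of \eqnok{def_gap_f} against an arbitrary reference point $z=(x,g)\in X\times{\cal G}$, and then forming a weighted telescoping sum. The two building blocks are the optimality conditions of the prox-mappings. For the primal step \eqnok{def_xt}, since $h+\mu\w+\eta_t P(x^{t-1},\cdot)$ is $(\mu+\eta_t)$-strongly convex with respect to $\|\cdot\|$ (using that both $\mu\w$ contributes $\mu\,P(x^t,\cdot)$ exactly and $P$ is $1$-strongly convex by \eqnok{strong_h}), the three-point property of \eqnok{prox_mapping} yields, for every $x\in X$,
\[
\langle g^t, x^t-x\rangle + h(x^t)+\mu\w(x^t) - h(x)-\mu\w(x) \le \eta_t P(x^{t-1},x) - (\eta_t+\mu) P(x^t,x) - \eta_t P(x^{t-1},x^t).
\]
Because $J_f$, and hence $D_f(g^{t-1},\cdot)$, is $(1/L_f)$-strongly convex with respect to $\|\cdot\|_*$, the dual step \eqnok{def_yt} gives, for every $g\in{\cal G}$,
\[
\langle \tilde x^t, g-g^t\rangle + J_f(g^t) - J_f(g) \le \tau_t D_f(g^{t-1},g) - (\tau_t+1) D_f(g^t,g) - \tau_t D_f(g^{t-1},g^t).
\]

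Adding these and matching the bilinear terms against those in $Q_f(z^t,z)$, the only discrepancy is the cross term $\langle g^t-g,\ x^t-\tilde x^t\rangle$, so that
\[
Q_f(z^t,z) \le \eta_t P(x^{t-1},x) - (\eta_t+\mu) P(x^t,x) + \tau_t D_f(g^{t-1},g) - (\tau_t+1) D_f(g^t,g) - \eta_t P(x^{t-1},x^t) - \tau_t D_f(g^{t-1},g^t) - \langle g^t-g,\ x^t-\tilde x^t\rangle.
\]
I would next insert the extrapolation formula \eqnok{def_txt}, writing $x^t-\tilde x^t=(x^t-x^{t-1})-\alpha_t(x^{t-1}-x^{t-2})$, and split $\alpha_t\langle g^t-g,\ x^{t-1}-x^{t-2}\rangle$ into a part $\alpha_t\langle g^{t-1}-g,\ x^{t-1}-x^{t-2}\rangle$ that telescopes against the corresponding term from iteration $t-1$, and a residual $\alpha_t\langle g^t-g^{t-1},\ x^{t-1}-x^{t-2}\rangle$. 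This residual, together with $-\langle g^t-g,\ x^t-x^{t-1}\rangle$, is controlled by Young's inequality and absorbed into the negative proximal terms $-\eta_t P(x^{t-1},x^t)\le-\tfrac{\eta_t}{2}\|x^t-x^{t-1}\|^2$ and $-\tau_t D_f(g^{t-1},g^t)\le-\tfrac{\tau_t}{2L_f}\|g^t-g^{t-1}\|_*^2$ supplied by \eqnok{strong_h} and the strong convexity of $J_f$.

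Multiplying by weights $\theta_t$ and summing over $t=1,\dots,k$, I would pick the parameters so that (i) the products $\theta_t\eta_t$, $\theta_t\tau_t$ make the $P(x^{t-1},\cdot)$ and $D_f(g^{t-1},\cdot)$ terms telescope, with the strong-convexity coefficients $\mu$ and $1/L_f$ exactly bridging the gap between successive weights; (ii) the extrapolation weights satisfy $\theta_t\alpha_t=\theta_{t-1}$ so the surviving cross terms telescope (one checks this holds for both \eqnok{constant_step_PDG1} and \eqnok{var_step_PDG1}); and (iii) the Young residuals remain dominated by the negative proximal terms. For part a), the constant choice \eqnok{constant_step_PDG1} with $\theta_t=\alpha^{-t}$ converts these relations into a uniform contraction (the telescoping condition $\theta_t(\eta_t+\mu)=\theta_{t+1}\eta_{t+1}$ reduces to $(\eta+\mu)/\eta=1/\alpha$, which the stepsizes satisfy), so that summing leaves only a $t=0$ boundary term and the final $-(\eta_k+\mu)\theta_k P(x^k,x^*)$ term; taking $z=z^*=(x^*,g^*)$ and using $Q_f(z^t,z^*)\ge0$ yields \eqnok{PDG_cor1_result}, while taking the maximum over $z$ and invoking Jensen's inequality in $\bar z$ (joint convexity of $Q_f(\cdot,z)$ applied to the mean \eqnok{def_avg_output_s}) together with \eqnok{PDgap1} and \eqnok{PDgap2} gives \eqnok{PDG_cor1_result1} and \eqnok{PDG_cor1_result2}. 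For part b), where $\mu=0$, the polynomial choice \eqnok{var_step_PDG1} with $\theta_t=t$ again makes $\theta_t\eta_t$ and $\theta_t\tau_t$ telescope and produces the $1/[k(k+1)]$ rate of \eqnok{PDG_cor2_result} and \eqnok{PDG_cor2_result1}. I expect the main obstacle to be the simultaneous satisfaction of these parameter relations while keeping the Young residuals dominated, and in particular the careful handling of the boundary terms at $t=1$ (where $x^{-1}=x^0$ and $g^0=\nabla f(x^0)$ force $\tilde x^1=x^0$) and at $t=k$, which is where the precise constants appearing in \eqnok{PDG_cor1_result}--\eqnok{PDG_cor2_result1} are pinned down.
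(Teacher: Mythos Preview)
Your overall architecture matches the paper's: the two three-point inequalities from \eqnok{def_xt} and \eqnok{def_yt}, the splitting of the cross term via \eqnok{def_txt}, absorption of the residual by Young's inequality, and a $\theta_t$-weighted telescoping sum. This is exactly Proposition~\ref{main_theorem_d}, and your treatment of part b) (where $\tau_1=0$ kills the dual boundary term) is fine.

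The gap is in part a), specifically your derivation of \eqnok{PDG_cor1_result1}--\eqnok{PDG_cor1_result2}. After telescoping, the bound you obtain reads
\[
\Big(\tsum_{t=1}^k\theta_t\Big)\,Q_f(\bar z^k,z)\;\le\;\theta_1\eta_1 P(x^0,x)+\theta_1\tau_1 D_f(g^0,g),
\]
and in part a) one has $\tau_1=\sqrt{2L_f/\mu}>0$. Your plan to ``take the maximum over $z$'' then fails: $D_f(g^0,g)$ is unbounded over $g\in{\cal G}$, so the right-hand side blows up and you get no finite bound on $\gap^*(\bar z^k)$ or $\gap(\bar z^k)$. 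The paper does not maximize blindly. It observes that the maximizer in $g$ of $Q_f(\bar z^k,(x,g))$ is $\bar g^k_*:=\nabla f(\bar x^k)$, plugs this specific $g$ into the bound, and then controls $D_f(g^0,\bar g^k_*)\le\tfrac{L_f}{2}\|\bar x^k-x^0\|^2$ via \eqnok{bound_Dy0}. The quantity $\|\bar x^k-x^0\|^2$ is in turn bounded by convexity of $\|\cdot\|^2$, the triangle inequality through $x^*$, and the \emph{already established} contraction \eqnok{PDG_cor1_result} applied at each $t$, yielding $D_f(g^0,\bar g^k_*)\le L_f\tfrac{2\mu+L_f}{\mu}P(x^0,x^*)$. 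This feedback of \eqnok{PDG_cor1_result} into the gap estimate is precisely what produces the factor $\tfrac{L_f}{\mu}(2+\tfrac{L_f}{\mu})$ in \eqnok{PDG_cor1_result1}--\eqnok{PDG_cor1_result2}; without it you cannot close the argument.
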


\vgap

Observe that when the algorithmic parameters are set to \eqnok{constant_step_PDG1}, by using an inductive argument,
we can easily show that
\beq \label{def_underline1}
\underline x^k = (1-\alpha^2) x^{k-1} + (1-\alpha) \tsum_{t=1}^{k-2} (\alpha^{k-t} x^t) + \alpha^k x^0.
\eeq
In other words,  $\underline x^k$ can be written as a convex combination of $x^0, \ldots, x^{k-1}$ 
and hence $\underline x^k \in X$ for any $k \ge 1$.
Similarly, when the algorithmic parameters are set to \eqnok{var_step_PDG1}, 
we can show by using induction that
\beq \label{def_underline2}
\underline x^k = \tfrac{2 (2k-1)}{k (k+1)} x^{k-1} + \tfrac{2}{k(k+1)} \tsum_{t=1}^{k-2} (i x^i),
\eeq
which implies $\underline x^k \in X$. 
Therefore, we only need to assume the differentiability of $f$ over $X$ rather than
the whole $\bbr^n$.

In view of the results obtained in Theorem~\ref{cor_det1},
the primal-dual gradient method is an optimal method for convex optimization.
In fact, the rates of convergence in \eqnok{PDG_cor1_result1}, \eqnok{PDG_cor1_result2},
\eqnok{PDG_cor2_result} and \eqnok{PDG_cor2_result1} associated with the ergodic mean
$\bar z^k$ have employed
the primal-dual optimality gaps $g^*(\bar z^k)$ and $g(\bar z^k)$, which are stronger than
the primal optimality gap
$\Psi(\bar x^k) - \Psi(x^*)$ used in the previous studies for accelerated gradient methods. 
Moreover, whenever $X$ is bounded, the primal-dual optimality gap $g(\bar z^k)$ gives us a computable online
accuracy certificates to check the quality of the solution $\bar z^k$ (see \cite{lns11,GhaLan12-2a} for some related discussions).
Also observe that each iteration of the PDG method requires the computation of $\nabla f$, and hence all the $m$ components
$\nabla f_i$. In the next section, we will develop a randomized PDG method that can possibly save the
number of gradient evaluations for $\nabla f_i$ by utilizing the finite-sum structure of problem \eqnok{cp}.

\setcounter{equation}{0}
\section{Randomized primal-dual gradient methods} \label{sec_ORIG}
In this section, we present a randomized primal-dual gradient (RPDG) method
which needs to compute the gradient of only one randomly selected component function 
$f_i$ at each iteration. We show that RPDG can possibly achieve a better complexity than PDG
in terms of the total number of gradient evaluations. 
%and the primal optimality gap,
%respectively, for strongly convex problems and
%more general problems without strong convexity. 
%In particular, the computational complexity
%obtained in the former case for PRDG can be
%significantly better than the one obtained by deterministic PDG for convex optimization.
\subsection{Multi-dual-player reformulation and the RPDG algorithm}

We start by introducing a different saddle point reformulation of \eqnok{cp} than \eqnok{cpsaddle_f}.
Let $J_i: {\cal Y}_i \to \bbr$ be the conjugate functions of $f_i$ and
${\cal Y}_i$, $i = 1, \ldots, m$, denote the dual spaces where the
gradients of $f_i$ reside. For the sake of notational convenience, let us denote
$J(y) := \tsum_{i=1}^m J_i(y_i)$,
${\cal Y} := {\cal Y}_1 \times {\cal Y}_2 \times \ldots \times {\cal Y}_m$,
and $y = (y_1; y_2; \ldots; y_m)$ for any $y_i \in {\cal Y}_i$, $i = 1, \ldots, m$.
Clearly, we can reformulate problem \eqnok{cp}
equivalently as a saddle point problem:
\beq \label{cpsaddle}
\Psi^* := \min_{x \in X} \left\{h(x) + \mu \, \w(x) + \max_{y \in {\cal Y} }  \langle x, U y \rangle - J(y)\right\},
\eeq
where
$U \in \bbr^{n \times nm}$ is given by
\beq \label{def_U}
U := \left[I, I, \ldots, I\right].
\eeq
Here $I$ is the identity matrix in $\bbr^n$.
Given a pair of feasible solutions $\bar z = (\bar x, \bar y)$ and $z = (x, y)$ of \eqnok{cpsaddle},
we define the primal-dual gap function $Q(\bar z, z)$ by
\beq \label{def_gap}
Q(\bar z, z) := \left[h(\bar x) + \mu \w(\bar x) + \langle \bar x, U y \rangle -  J(y)\right]
- \left[\cX(x) + \mu \w(x) + \langle x, U \bar y \rangle - J(\bar y) \right].
\eeq
It is well-known that $\bar z \in Z \equiv X \times {\cal Y}$ is
an optimal solution of \eqnok{cpsaddle} if and only if $Q(\bar z, z) \le 0$
for any $z \in Z$. 

Since $J_i, i = 1, \ldots, m$, are strongly convex
with modulus $\sigma_i = 1/ L_i$ w.r.t. $\|\cdot\|_*$, we 
can define their associated dual prox-functions and dual prox-mappings as
\begin{align} 
D_i(y^0_i, y_i) &:= J_i(y_i) - [J_i(y^0_i) + \langle J_i'(y^0_i), y_i - y^0_i \rangle], \label{def_dual_prox}\\
\Pr_{{\cal Y}_i} (-\tilde x, y_i^0, \tau) &:= \arg\min\limits_{y_i \in {\cal Y}_i} \left\{ 
\langle - \tilde x, y \rangle  + J_i(y_i)+ \tau D_i(y^0_i, y_i)\right\},  \label{d_prox_mapping}
\end{align}
for any $y^0_i, y_i \in {\cal Y}_i$. 
Accordingly, we define
\beq \label{def_D}
D(\tilde y, y) := \tsum_{i=1}^m D_i(\tilde y_i, y_i).
\eeq
Again, $D_i$ may not be uniquely defined
since $J_i$ are not necessarily differentiable. However, we will discuss how to
specify the particular selection of $J'_i \in \partial J_i$ later
in this subsection.

\vgap

We are now ready to describe the randomized primal-dual method, which
is obtained by properly modifying the primal-dual gradient method as follows. Firstly, in \eqnok{def_yt_r}, 
we only compute a randomly selected
dual prox-mapping $\Pr_{{\cal Y}_{i}}$ rather than the dual prox-mapping $\Pr_{{\cal G}}$ as in Algorithm~\ref{algPDG}.
Secondly, in addition to the primal prediction step~\eqnok{def_txt_r}, we add a new dual prediction step \eqnok{def_tyt_r}, and 
then use the predicted 
dual variable $\tilde y^t$ for the computation of the new search point $x^t$
in \eqnok{def_xt_r}. It can be easily seen that
the RPDG method reduces to the PDG method
whenever this algorithm is directly applied to \eqnok{cpsaddle_f} (i.e., $m = 1$, ${\cal Y}_1 = {\cal G}$, and
$J_1 = J_f$) .

\begin{algorithm}
    \caption{A randomized primal-dual gradient (RPDG) method}
    \label{algRPDG}
    \begin{algorithmic}
\State Let $x^0=x^{-1} \in X$, and the nonnegative parameters $\{\tau_t\},$ $\{\eta_t\},$ and 
$\{\alpha_t\}$ be given. 
\State Set $y^0_i = \nabla f_i(x^0)$, $i=1, \ldots, m$.

\For {$t=1, \ldots,k$}
\State Choose $i_t$ according to $\prob\{i_t = i\} = p_i$, $i=1, \ldots, m$.
\State Update $z^t=(x^t, y^t)$ according to
\begin{align}
\tilde x^{t}&=\alpha_t (x^{t-1}-x^{t-2})+ x^{t-1}. \label{def_txt_r}\\
y_i^{t} &= 
\begin{cases}
\Pr_{{\cal Y}_{i}} (-\tilde x^t, y_i^{t-1}, \tau_t),& i = i_t, \\
 y_i^{t-1}, & i \neq i_t.
\end{cases} \label{def_yt_r}\\
\tilde y^t_i &= 
\begin{cases}
p_i^{-1} (y^t_i - y^{t-1}_i) + y^{t-1}_i, & i = i_t, \\
y_i^{t-1}, & i\neq i_t.  
\end{cases}. \label{def_tyt_r} \\
x^{t} &= \Pr_{X} (\tsum_{i=1}^m \tilde y_i^t, x^{t-1}, \eta_t). \label{def_xt_r}
 \end{align}

%\State Set
%$\bar z^t= (1 - \gamma_t) \bar z^{t-1} + \gamma_t z^t$.
\EndFor
%\State Set $\bar z^k \equiv (\bar x^k, \bar y^k)$ to
%\beq \label{def_out_r}
%\bar x^k = \tsum_{t=0}^k \theta_k^t x^t \ \ \ \mbox{and} \ \ \ \bar y^k = \tsum_{t=0}^k  \tilde \theta_k^t y^t,
%\eeq
%where $\{\theta_k^t\}$ and $\{\tilde \theta_k^t\}$ satisfy
%$
%\tsum_{t=0}^k \theta_k^t = \tsum_{t=0}^k \tilde \theta_k^t = 1, \theta_k^t, \tilde \theta_k^t \ge 0.
%$
    \end{algorithmic}
\end{algorithm}

Similarly to the PDG method,
the RPDG method can be viewed 
as a game iteratively performed by a buyer and $m$ suppliers
for finding the solutions (order quantities and product prices) of the saddle point problem in \eqnok{cpsaddle}. 
In this game, both the buyer and suppliers have access to
their local cost $h(x)+\mu \w(x)$ and $J_i(y_i)$,  respectively,
as well as their interactive cost (or revenue) represented by
a bilinear function $\langle x, y_i \rangle$.  
Also, the buyer has to purchase the same amount of products
from each supplier (e.g., for fairness). 
Although there are $m$ suppliers,
in each iteration only a randomly chosen supplier can make price changes according to \eqnok{def_yt_r}
using the predicted demand $\tilde x^t$. In order to understand the buyer's decision in \eqnok{def_xt_r}, let us first denote
\beq  \label{def_hyt_r}
\hat y_i^t := \Pr_{{\cal Y}_{i}} (-\tilde x^t, y_i^{t-1}, \tau_t), \ \ i = 1, \ldots, m; \, t = 1, \ldots, k.
\eeq
In other words, $\hat y_i^t$, $i = 1, \ldots, m$, denote the prices that all the suppliers can possibly
set up at iteration $t$. 
Then we can see that
\beq \label{exp_tyt}
\bbe_t[\tilde y^t_i] = \hat y^t_i.
\eeq
Indeed, we have 
\beq \label{def_tyt_alt_r1}
y_i^t = \begin{cases}
\hat y_i^t,& i = i_t, \\
 y_i^{t-1}, & i \neq i_t.
\end{cases} 
\eeq
Hence $\bbe_t[y_i^t] = p_i \hat y_i^t + (1 - p_i) y_i^{t-1}$, $i = 1, \ldots, m$.
Using this identity in the definition of $\tilde y^t$ in \eqnok{def_tyt_r}, we
obtain \eqnok{exp_tyt}. Instead of using $\tsum_{i=1}^m \hat y^t_i$ in determining
his order in \eqnok{def_xt_r},
the buyer notices that only one supplier has made a change on the price, and thus 
uses $\tsum_{i=1}^m \tilde y^t_i$ to predict the case when all the dual players
would modify the prices simultaneously. 

In order to implement the above RPDG method, we shall explicitly specify 
the selection of the subgradient $J'_{i_t}$ in the definition of
the dual prox-mapping in \eqnok{def_yt_r}. 
Denoting $\underline x^0_i = x^0$, $i = 1, \ldots, m$,
we can easily see from $y^0_i = \nabla f_i(x^0)$ that $\underline x^0_i \in \partial f_i^*(y_i^0)$, $i = 1, \ldots, m$.
Using this relation and letting $J_i'(y_i^{t-1}) = \underline x^{t-1}_i$ in the definition of $D_i(y_i^{t-1}, y_i)$
in \eqnok{def_yt_r} (see \eqnok{def_dual_prox}),
we then conclude from Lemma~\ref{lemma_dual_move} (with $J_f = J_{i_t}$ and $D_f = D_{i_t}$) and \eqnok{def_yt_r} that for any $t \ge 1$,
\begin{align*} 
\underline x^t_{i_t} &= (\tilde x^t + \tau_t \underline x^{t-1}_{i_t}) / (1 + \tau_t),  \ \ \  \underline x^t_i = \underline x^{t-1}_i, \ \forall i \neq i_t;\\  
y_{i_t}^{t} &= \nabla f_{i_t}(\underline x^t_{i_t}), \ \ \ y_i^t = y_i^{t-1}, \ \forall i \neq i_t.
\end{align*}
Moreover, observe that the computation of $x^t$ in \eqnok{def_xt_r}
requires an involved computation of $\tsum_{i=1}^m \tilde y_i^t$. In order to save computational time,
we suggest to compute this quantity in a recursive manner as follows. Let us denote $g^t \equiv \tsum_{i=1}^m y_i^t$.
Clearly, in view of the fact that $y^t_i = y^{t-1}_i$, $\forall i \neq i_t$, we have
\[
g^t = g^{t-1} + (y^t_{i_t} - y^{t-1}_{i_t}).
\]
Also, by the definition of $g^t$ and \eqnok{def_tyt_r}, we have
\begin{align*}
\tsum_{i=1}^m \tilde y^t_i 
&= \tsum_{i \neq i_t} y_i^{t-1} + p_{i_t}^{-1} (y^t_{i_t} - y^{t-1}_{i_t}) + y^{t-1}_{i_t} \\
&= \tsum_{i=1}^m y_i^{t-1} +p_{i_t}^{-1} (y^t_{i_t} - y^{t-1}_{i_t}) \\
&=g^{t-1} + p_{i_t}^{-1} (y_{i_t}^t - y_{i_t}^{t-1}).
\end{align*}
Incorporating these two ideas mentioned above, we present an efficient implementation of the
RPDG method in Algorithm~\ref{algRGB}.

\begin{algorithm}
    \caption{An efficient implementation of the RPDG method}
    \label{algRGB}
    \begin{algorithmic}
\State Let $x^0=x^{-1} \in X$, and nonnegative parameters $\{\alpha_t\}$, $\{\tau_t\},$ 
and $\{\eta_t\}$ be given. 
\State Set $\underline x_i^0 = x^0$, $y^0_i = \nabla f_i( x^0)$, $i=1, \ldots, m$, and $g^0 = \tsum_{i=1}^m y^0_i$.

\For {$t=1, \ldots,k$}
\State Choose $i_t$ according to $\prob\{i_t = i\} = p_i$, $i=1, \ldots, m$.
\State Update $z^t := (x^t, y^t)$ by
\begin{align}
\tilde x^{t}&=\alpha_t (x^{t-1}-x^{t-2})+ x^{t-1}. \label{def_txt_r1}\\
\underline x_i^t &= 
\begin{cases}
 (1+ \tau_t)^{-1} \left(\tilde x^t + \tau_t \underline x_i^{t-1}\right), & i = i_t,\\
\underline x_i^{t-1}, & i \neq i_t.
\end{cases} \label{def_uxt_r}\\ 
y_i^{t} &= 
\begin{cases}
\nabla f_i(\underline x_i^t),& i = i_t, \\
 y_i^{t-1}, & i \neq i_t.
\end{cases} \label{def_yt_r1}\\
x^{t} &= \Pr_{X} (g^{t-1} + p_{i_t}^{-1} (y_{i_t}^t - y_{i_t}^{t-1}), x^{t-1}, \eta_t). \label{def_xt_r1}\\
g^t &= g^{t-1} + y_{i_t}^t - y_{i_t}^{t-1}.\label{def_gt}
\end{align}
\EndFor

    \end{algorithmic}
\end{algorithm}

Clearly, the RPDG method is an incremental gradient type method since each iteration of this algorithm
involves the computation of the gradient $\nabla f_{i_t}$ of only one component function.
As shown in the following Subsection, such an randomization scheme can lead to
significantly savings on the total number of gradient evaluations, at the expense of
more primal prox-mappings.

It should also be noted that due to the randomness in the RPDG method, we can not guarantee that
$\underline x_i^t \in X$ for all $i =1, \ldots, m$, and $t \ge 1$ in general, even though we do have all the iterates $x^t \in X$.
That is why we need to make the assumption that $f_i$'s are differentiable over $\bbr^n$
for the RPDG method.

\subsection{The convergence of the RPDG algorithm}
Our goal in this subsection is to describe the convergence 
properties of the RPDG method for the strongly convex case when $\mu > 0$.
Generalization of the RPDG method for the non-strongly convex case will be discussed
in Section~\ref{sec_general}. 

Theorem~\ref{theorem_RPDG_s} below states some general convergence properties of
RPDG. Similar to PDG method, we provide bounds on $\bbe[P(x^k, x^*)]$ and $\bbe[\Psi(\bar x^k) - \Psi(x^*)]$.
However, we cannot provide a bound on the expected primal-dual gap $\bbe[\gap(\bar x^k)]$ 
even though our analysis
for the RPDG algorithm still relies on the primal-dual gap function $Q$ in \eqnok{def_gap}
(see \cite{DangLan14-1} for some relevant disucssions).

\begin{theorem} \label{theorem_RPDG_s}
%Let $\gamma_t \in [0,1]$, $t = 1, \ldots, k$ be given and $\Gamma_t$
%be defined in \eqnok{def_gamma_r}. 
Suppose that $\{\tau_t\}$, $\{\eta_t\}$, and $\{\alpha_t\}$ in the RPDG method are
set to \beq \label{constant_step_RPDG}
\tau_t = \tau, \ \ \ \eta_t = \eta, \ \ \ \mbox{and} \ \ \ \alpha_t = \alpha, 
\eeq 
for any $t \ge 1$  such that
\begin{align}
(1-\alpha) (1+\tau) &\le p_i, i=1,\ldots, m, \label{cond_s1}\\
 \eta &\le \alpha (\mu + \eta), \label{cond_s2}\\
\eta \tau p_i  &\ge 4 L_i , i = 1, \ldots, m, \label{cond_s3} 
\end{align}
for some $\alpha \in (0,1)$.
Then, for any $k \ge 1$, we have
\begin{align} 
\bbe[P(x^k, x^*)] 
&\le  \left (1 + \tfrac{L_f \alpha}{(1-\alpha)\eta} \right) \alpha^k P(x^0, x^*),\label{RPDG_s_main}\\
\bbe[\Psi(\bar x^k) - \Psi(x^*)] 
&\le \alpha^{k/2}\left(\alpha^{-1}\eta + \tfrac{3-2\alpha}{1-\alpha}L_f+\tfrac{2 L_f^2 \alpha}{(1-\alpha)\eta}\right)P(x^0,x^*), \label{RPDG_s_main1}
\end{align}
where $\bar{x}^k=(\tsum_{t=1}^{k}\theta_t)^{-1}\tsum_{t=1}^{k}(\theta_tx^t)$ with $\{\theta_t\}$ defined as in \eqnok{constant_step_PDG1}, and $x^*$ denotes the optimal solution
of problem \eqnok{cp}, and the expectation is taken w.r.t. $i_1, \ldots, i_k$.
\end{theorem}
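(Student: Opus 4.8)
The plan is to establish a one-step recursion for a suitable potential (Lyapunov) function, take conditional expectations to handle the randomized coordinate selection, and then telescope. This mirrors the structure of the PDG analysis (Theorem~\ref{cor_det1}), but the new ingredient is controlling the variance introduced by the dual prediction step \eqnok{def_tyt_r}, which is precisely why the three conditions \eqnok{cond_s1}--\eqnok{cond_s3} appear.

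\textbf{Step 1: A per-iteration gap estimate.} First I would bound the primal-dual gap $Q(z^t, z)$ for an arbitrary reference point $z = (x, y)$, exactly as in the deterministic case. The primal update \eqnok{def_xt_r} is a prox-mapping, so by the standard three-point inequality for $\Pr_X$ together with the strong convexity \eqnok{strong_h} of $P$ and the $\mu$-strong convexity of $\w$, the primal contribution is controlled by $\eta P(x^{t-1}, x) - (\mu + \eta) P(x^t, x)$ plus a bilinear cross term $\langle x^t - x, U(\tilde y^t - y)\rangle$. Symmetrically, each active dual prox-mapping \eqnok{def_yt_r} and the strong convexity of $J_i$ (modulus $1/L_i$) yield terms $\tau_t D(y^{t-1}, y) - (\tau_t + 1) D(y^t, y)$ together with the matching bilinear terms. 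The role of condition \eqnok{cond_s2}, i.e.\ $\eta \le \alpha(\mu + \eta)$, is to make the coefficient $\eta$ on the incoming $P(x^{t-1}, \cdot)$ term no larger than $\alpha$ times the coefficient $(\mu+\eta)$ on the outgoing $P(x^t, \cdot)$ term, so that after scaling by $\theta_t = 1/\alpha^t$ the $P$-terms telescope with geometric decay.

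\textbf{Step 2: Taking conditional expectations and the variance term.} The genuinely new difficulty is the randomization. Because only coordinate $i_t$ is updated, I would pass to the conditional expectation $\bbe_t[\cdot]$ given the history. Using \eqnok{exp_tyt}, the predicted dual $\tilde y^t$ is an unbiased estimator of $\hat y^t$, so in expectation the bilinear cross terms reproduce the full-gradient behavior of PDG. The price is a variance term of the form $p_i^{-1}\|\hat y_i^t - y_i^{t-1}\|_*^2$ coming from the $p_i^{-1}$ rescaling in \eqnok{def_tyt_r}; this is the main obstacle. I expect to absorb it against the dual proximity decrease $D(y^{t-1}, y^t)$, and this is exactly what condition \eqnok{cond_s3}, $\eta \tau p_i \ge 4 L_i$, buys: it guarantees the dual prox-term dominates the variance so the net one-step inequality stays a contraction. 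Condition \eqnok{cond_s1}, $(1-\alpha)(1+\tau) \le p_i$, is needed to align the decay rate of the accumulated dual error terms with $\alpha$, ensuring the cross terms generated by the prediction step at iteration $t$ can be cancelled against those at iteration $t+1$ after weighting.

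\textbf{Step 3: Telescoping and extracting both bounds.} After assembling a weighted inequality $\sum_{t=1}^k \theta_t \bbe[Q(z^t, z)] \le$ (initial $P$ and $D$ terms) $- (\text{positive } P(x^k,\cdot), D(y^k,\cdot) \text{ terms})$, I would specialize the reference point. Choosing $z = z^* = (x^*, y^*)$ and using $Q(z^t, z^*) \ge 0$ together with $\theta_k = 1/\alpha^k$ isolates $\bbe[(\mu+\eta)P(x^k, x^*)]$ on the left and leaves $\alpha^k$ times the initial data on the right; dividing through gives \eqnok{RPDG_s_main}, where the factor $1 + L_f\alpha/((1-\alpha)\eta)$ records the contribution of the initial dual discrepancy $D(y^0, y^*)$ re-expressed in terms of $P(x^0,x^*)$ via $y^0_i = \nabla f_i(x^0)$ and the $1/L_i$-strong convexity of $J_i$. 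For the ergodic bound \eqnok{RPDG_s_main1}, I would instead invoke the convexity of $Q(\cdot, z)$ in its first argument so that $\bbe[Q(\bar z^k, z)] \le (\sum_t \theta_t)^{-1}\sum_t \theta_t \bbe[Q(z^t,z)]$, bound $\Psi(\bar x^k) - \Psi(x^*)$ by a suitable maximum of $Q(\bar z^k, \cdot)$ as in \eqnok{PDgap1}, and control the residual $\bbe[D(y^k, y)]$ and $\bbe[P(x^k,x)]$ terms; the half-power $\alpha^{k/2}$ and the explicit constant arise from balancing these quadratic residual terms against the weighted sum $\sum_t \theta_t \sim \alpha^{-k}/(1-\alpha)$ and applying Cauchy--Schwarz to the expectation. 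The most delicate bookkeeping will be matching constants in this ergodic estimate, but the structural mechanism is the same telescoping contraction established in Step~2.
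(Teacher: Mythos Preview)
Your Steps 1--3 (first half) track the paper's argument closely: the paper indeed derives a per-iteration inequality for $\bbe[Q((x^t,\hat y^t),z)]$ by applying the three-point lemma to both the primal prox-mapping and the \emph{full} dual update $\hat y^t$ (not $y^t$), takes conditional expectations, and then sums with weights $\theta_t=\alpha^{-t}$. Condition \eqnok{cond_s2} makes the $P$-terms telescope, \eqnok{cond_s1} makes the $D$-terms telescope, and \eqnok{cond_s3} kills the cross-terms via Young's inequality, yielding the master estimate \eqnok{ORIG_main}. Setting $z=z^*$ and dropping $Q\ge 0$ gives \eqnok{RPDG_s_main} exactly as you describe.

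There is, however, a real gap in your plan for the ergodic bound \eqnok{RPDG_s_main1}. You propose to bound $\Psi(\bar x^k)-\Psi(x^*)$ by ``a suitable maximum of $Q(\bar z^k,\cdot)$ as in \eqnok{PDgap1}.'' But \eqnok{PDgap1} gives $\Psi(\bar x)-\Psi(x^*)\le \gap^*(\bar z)=\max_{g}Q_f(\bar z,(x^*,g))$, and the maximizer depends on the random $\bar z^k$; since $\bbe[\max(\cdot)]\ge \max \bbe[\cdot]$, you cannot push the expectation through. The paper itself flags this: it says explicitly that a bound on the expected primal-dual gap is \emph{not} available for RPDG. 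The paper instead uses the pointwise lemma \eqnok{psi_Q},
\[
\Psi(\bar x)-\Psi(x^*)\ \le\ Q((\bar x,\bar y),z^*)\ +\ \tfrac{L_f}{2}\|\bar x-x^*\|^2,
\]
which requires no maximum. The first term on the right is then controlled by convexity of $Q$ and the already-established telescoped sum at the fixed point $z^*$, and the second term is bounded via convexity of $\|\cdot\|^2$ together with the just-proven \eqnok{RPDG_s_main} applied to each $P(x^t,x^*)$. The $\alpha^{k/2}$ comes from the elementary geometric computation
\[
\bigl(\textstyle\sum_{t=1}^k\alpha^{-t}\bigr)^{-1}\sum_{t=1}^k\alpha^{-t}\cdot\alpha^{t}\ \lesssim\ \alpha^{k/2},
\]
not from Cauchy--Schwarz. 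So the missing idea is precisely lemma \eqnok{psi_Q}; once you have it, the rest of your Step~3 is straightforward bookkeeping rather than ``delicate.''
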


\vgap

We now provide a few specific selections of $p_i$, $\tau$, $\eta$, and $\alpha$ 
satisfying \eqnok{cond_s1}-\eqnok{cond_s3} and establish
the complexity of the RPDG method for computing
a stochastic $\epsilon$-solution of problem \eqnok{cp}, i.e., a point $\bar x \in X$ s.t. $\bbe[P(\bar x, x^*)] \le \epsilon$,
as well as a stochastic $(\epsilon, \lambda)$-solution of 
problem \eqnok{cp}, i.e., a point $\bar x \in X$ s.t.
$\Prob\{P(\bar x, x^*) \le \epsilon\} \ge 1 - \lambda$ for some $\lambda \in (0,1)$. 
Moreover, in view of \eqnok{RPDG_s_main1},
similar complexity bounds of the RPDG method can 
be established in terms of the primal optimality gap, i.e. $\bbe[\Psi(\bar x)-\Psi^*]$.

The following corollary shows the convergence of RPDG under a non-uniform
distribution for the random variables $i_t$, $t=1, \ldots,k$.

\begin{corollary} \label{cor_RPDG1}
Suppose that $\{i_t\}$ in the RPDG method are distributed over $\{1, \ldots, m \}$ 
according to
\beq \label{nonuniform}
p_i = \Prob\{i_t = i\} = \tfrac{1}{2m} + \tfrac{L_i}{2 L},  i = 1, \ldots,m.
\eeq
Also assume that $\{\tau_t\}$, $\{\eta_t\}$, and $\{\alpha_t\}$ are set to \eqnok{constant_step_RPDG} with
\beq \label{nonuniform_stepsize}
\tau = \tfrac{\sqrt{(m-1)^2 + 4 m C} - (m-1)}{2m}, \ \ 
\eta = \tfrac{\mu \sqrt{(m-1)^2 + 4m  C} + \mu (m-1)}{2}, \ \ \mbox{and} \ \
\alpha= 1 - \tfrac{1}{(m+1) + \sqrt{(m-1)^2 + 4 m C}},
\eeq
where
\beq \label{def_C}
C = \tfrac{8 L}{\mu}.
\eeq
Then for any $k \ge 1$, we have
\begin{align}
\bbe[P(x^k, x^*)] &\le(1+\tfrac{3L_f}{\mu})  \alpha^k  P(x^{0}, x^*), \label{nonuniform_result}\\
\bbe[\Psi(\bar x^k) - \Psi^*] &\le \alpha^{k/2}(1-\alpha)^{-1} \left[\mu +  2L_f + \tfrac{L_f^2}{\mu}\right] P(x^{0}, x^*). \label{nonuniform_result1}
\end{align}
As a consequence, the number of iterations performed by the RPDG method to find a stochastic $\epsilon$-solution
and a stochastic $(\epsilon, \lambda)$-solution of \eqnok{cp}, in terms of the distance to the optimal solution, i.e., $\bbe[P(x^k,x^*)]$, 
can be bounded by $K(\epsilon,C)$ and $K(\lambda \epsilon, C)$, respectively, where
\begin{align}
K(\epsilon, C) & := \left[(m+1) + \sqrt{(m-1)^2 + 4 m C} \right] \log \left[(1+\tfrac{3L_f}{\mu})\tfrac{P(x^{0}, x^*)}{\epsilon}\right]. \label{non_uniform_com1}
\end{align}
Similarly, the total number of iterations performed by the RPDG method to find a stochastic $\epsilon$-solution
and a stochastic $(\epsilon, \lambda)$-solution of \eqnok{cp}, in terms of the primal optimality gap, i.e., $\bbe[\Psi(\bar x^k)-\Psi^*]$, 
can be bounded by $\tilde K(\epsilon,C)$ and $\tilde K(\lambda \epsilon, C)$, respectively, where
\begin{align}
\tilde K(\epsilon, C) & := 2\left[(m+1) + \sqrt{(m-1)^2 + 4 m C} \right] \log \left[2(\mu +2L_f+\tfrac{L_f^2}{\mu})(m+\sqrt{mC})\tfrac{P(x^{0}, x^*)}{\epsilon}\right]. \label{non_uniform_com2}
\end{align}
\end{corollary}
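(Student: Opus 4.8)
The plan is to treat Corollary~\ref{cor_RPDG1} as a direct specialization of Theorem~\ref{theorem_RPDG_s}: first verify that the prescribed parameters satisfy the structural conditions \eqnok{cond_s1}--\eqnok{cond_s3}, then substitute them into the general bounds \eqnok{RPDG_s_main}--\eqnok{RPDG_s_main1} and simplify to \eqnok{nonuniform_result}--\eqnok{nonuniform_result1}, and finally invert the resulting geometric decay to read off the iteration complexities. Throughout I would abbreviate $S := \sqrt{(m-1)^2 + 4mC}$, so that the choice \eqnok{nonuniform_stepsize} reads $1-\alpha = 1/((m+1)+S)$, $1+\tau = ((m+1)+S)/(2m)$, and $\eta = \mu(S+m-1)/2$; a computation I would record once is $\alpha/(1-\alpha) = m+S$.

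For the verification step, \eqnok{cond_s1} is cleanest: multiplying the two displayed expressions gives $(1-\alpha)(1+\tau) = 1/(2m) \le p_i = \tfrac{1}{2m}+\tfrac{L_i}{2L}$ since $L_i \ge 0$. Condition \eqnok{cond_s2}, rewritten as $\eta(1-\alpha)\le \alpha\mu$, reduces after clearing $(m+1)+S$ to $(S+m-1)/2 \le m+S$, which holds trivially. The decisive computation is \eqnok{cond_s3}: using $S^2 - (m-1)^2 = 4mC$ and $C = 8L/\mu$ one finds $\eta\tau = \mu(S^2-(m-1)^2)/(4m) = \mu C = 8L$, whence $\eta\tau p_i = 8L p_i = \tfrac{4L}{m} + 4L_i \ge 4L_i$. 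This is exactly the point where the specific value $C = 8L/\mu$ is engineered, and I expect it to be the crux of the argument.

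With the conditions in hand, Theorem~\ref{theorem_RPDG_s} applies. For \eqnok{nonuniform_result} I would bound the prefactor $1 + \tfrac{L_f\alpha}{(1-\alpha)\eta}$ by $1 + \tfrac{3L_f}{\mu}$; substituting $\alpha/((1-\alpha)\eta) = 2(m+S)/(\mu(m+S-1))$ this amounts to $\tfrac{2(m+S)}{m+S-1}\le 3$, i.e. $m+S\ge 3$, which follows since $C \ge 8$ forces $S$ large. For \eqnok{nonuniform_result1} I would bound the three summands of the coefficient in \eqnok{RPDG_s_main1} separately against $(1-\alpha)^{-1}\mu$, $(1-\alpha)^{-1}2L_f$, and $(1-\alpha)^{-1}\tfrac{L_f^2}{\mu}$: the first uses \eqnok{cond_s2} in the form $\eta\le \alpha\mu/(1-\alpha)$, giving $\alpha^{-1}\eta\le (1-\alpha)^{-1}\mu$; the second uses $\alpha\ge\tfrac{1}{2}$ (equivalently $m+S\ge 1$) so that $3-2\alpha\le 2$; and the third reduces to $\tfrac{4(m+S)}{m+S-1}\le (m+1)+S$, i.e. the quadratic inequality $(m+S)^2 - 4(m+S) - 1 \ge 0$, again guaranteed by $m+S\ge 2+\sqrt{5}$.

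Finally, to extract the complexities I would use the elementary bound $\log(1/\alpha)\ge 1-\alpha = 1/((m+1)+S)$ to convert $\alpha^k \le \epsilon/[(1+\tfrac{3L_f}{\mu})P(x^0,x^*)]$ into $k \ge ((m+1)+S)\log[(1+\tfrac{3L_f}{\mu})P(x^0,x^*)/\epsilon] = K(\epsilon,C)$, and likewise turn $\alpha^{k/2}\le\epsilon/[\cdots]$ into a bound of the form $2((m+1)+S)\log[\cdots]$; here I additionally replace the prefactor $(1-\alpha)^{-1} = (m+1)+S$ inside the logarithm by the cleaner upper bound $2(m+\sqrt{mC})$, justified by $S \le (m-1)+2\sqrt{mC}$ (from $\sqrt{a+b}\le\sqrt{a}+\sqrt{b}$), so as to match $\tilde K(\epsilon,C)$ in \eqnok{non_uniform_com2}. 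The high-probability $(\epsilon,\lambda)$-statements then follow by running $K(\lambda\epsilon,C)$ (resp. $\tilde K(\lambda\epsilon,C)$) iterations and applying Markov's inequality, since $\bbe[P(x^k,x^*)]\le\lambda\epsilon$ yields $\Prob\{P(x^k,x^*)>\epsilon\}\le\lambda$. The argument is thus entirely a matter of careful bookkeeping on $T = m+S$; there is no conceptual obstacle once Theorem~\ref{theorem_RPDG_s} is granted.
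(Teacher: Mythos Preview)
Your proposal is correct and follows essentially the same route as the paper: verify \eqnok{cond_s1}--\eqnok{cond_s3} via the identities $(1-\alpha)(1+\tau)=1/(2m)$ and $\eta\tau=\mu C$, plug into Theorem~\ref{theorem_RPDG_s}, bound the prefactors using $\alpha\ge 3/4$ (equivalently your $m+S\ge 3$) and $\eta>2\mu$, then invert the geometric rate via $\log(1/\alpha)\ge 1-\alpha$ and apply Markov's inequality. The only cosmetic difference is that the paper exploits the exact identity $(1-\alpha)\eta=(\alpha-\tfrac12)\mu$ to simplify the prefactor in \eqnok{RPDG_s_main}, whereas you compute $\alpha/((1-\alpha)\eta)$ directly in terms of $m+S$; both arrive at the same bounds (and both tacitly assume $C$ is not too small, as in $\alpha\ge 3/4$).
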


\begin{proof}
It follows from \eqnok{nonuniform_stepsize} that
\[
(1-\alpha)(1+\tau) = 1/(2m) \le p_i, \ \
(1-\alpha) \eta =(\alpha-1/2) \mu \le \alpha \mu , \ \ \mbox{and} \ \ \eta \tau p_i = \mu  C p_i \ge 4 L_i,
\]
and hence that the conditions in \eqnok{cond_s1}-\eqnok{cond_s3} are satisfied.
Notice that by the fact that $\alpha \ge 3/4, \ \  \forall m\ge 1$ and \eqnok{nonuniform_stepsize}, we
have
\[
1 + \tfrac{L_f \alpha}{(1-\alpha)\eta} = 1+L_f\tfrac{\alpha}{(\alpha-1/2)\mu} \le 1+\tfrac{3L_f}{\mu}.
\]
Using the above bound in \eqnok{RPDG_s_main}, we obtain \eqnok{nonuniform_result}.
It follows from the facts $(1-\alpha) \eta \le \alpha \mu$, $1/2 \le \alpha \le 1, \forall m\ge 1$,
and 
$
\eta \ge \mu \sqrt{C} > 2\mu
$
that
\[
 \alpha^{-1}\eta+\tfrac{3-2\alpha}{1-\alpha}L_f+\tfrac{2L_f^2\alpha}{(1-\alpha)\eta}
\le (1-\alpha)^{-1}(\mu+2L_f+\tfrac{L_f^2}{\mu}) .
\]
Using the above bound in \eqnok{RPDG_s_main1}, we obtain \eqnok{nonuniform_result1}.
Denoting $D \equiv (1+ \tfrac{3L_f}{\mu})P(x^{0}, x^*)$,
we conclude from \eqnok{nonuniform_result} and the fact that $\log x \le x-1$ for any $x \in (0,1)$ that
\begin{align*}
\bbe[P(x^{K(\epsilon,C)}, x^*)] 
&\le D \alpha^\frac{\log (D/\epsilon)}{1-\alpha}  
 \le D \alpha^\frac{\log (D/\epsilon)}{-\log \alpha} 
 \le D \alpha^\frac{\log (\epsilon/ D)}{\log \alpha}  = \epsilon.
\end{align*}
Moreover, by Markov's inequality, \eqnok{nonuniform_result} and the fact that $\log x \le x-1$ for any $x \in (0,1)$, we have
\begin{align*}
\Prob\{P(x^{K(\lambda \epsilon, C)}, x^*) > \epsilon \} 
&\le \tfrac{1}{\epsilon} \bbe[P(x^{K( \lambda\epsilon,C)}, x^*)]
 \le \tfrac{D}{\epsilon} \alpha^\frac{\log (D/(\lambda \epsilon))}{1-\alpha} 
 \le \tfrac{D}{\epsilon} \alpha^\frac{\log (\lambda \epsilon/D)}{\log \alpha} = \lambda.
\end{align*}
The proofs for the complexity bounds in terms of the primal optimality gap is similar and hence the details are skipped.
\end{proof}

The non-uniform distribution in \eqnok{nonuniform} requires the estimation of the Lipschitz constants $L_i$, $i = 1, \ldots, m$.
In case such information is not available, we can use a uniform distribution for $i_t$, and as a result,
the complexity bounds will depend on a larger condition number given by $m \max_{i=1, \ldots, m} L_i /\mu$.
However, if we do have $L_1 = L_2 = \cdots=L_m$, then the results obtained by using a uniform distribution is 
slightly sharper than the one by using a non-uniform distribution in Corollary~\ref{cor_RPDG1}.

\begin{corollary} \label{cor_RPDG2}
Suppose that $\{i_t\}$ in the RPDG method are uniformly distributed over $\{1, \ldots, m \}$ according to
\beq \label{uniform}
p_i = \Prob\{i_t = i\} = \tfrac{1}{m},  i = 1, \ldots,m.
\eeq
Also assume that $\{\tau_t\}$, $\{\eta_t\}$, and $\{\alpha_t\}$ are set to \eqnok{constant_step_RPDG} with
\beq \label{uniform_stepsize}
\tau = \tfrac{\sqrt{(m-1)^2 + 4 m \bar C} - (m-1)}{2m}, \ \ \eta = \tfrac{\mu \sqrt{ (m-1)^2 + 4m \bar C} + \mu (m-1)}{2}, \ \
\mbox{and} \ \ \alpha = 1 - \tfrac{2}{(m+1) + \sqrt{(m-1)^2 + 4 m \bar C}},
\eeq
where
\beq \label{def_barC}
\bar C := \tfrac{4 m}{\mu} \max\limits_{i=1,\ldots, m} L_i.
\eeq
Then we have
\begin{align} 
\bbe[P(x^k, x^*)] &\le  (1+\tfrac{L_f}{\mu})\alpha^k P(x^{0}, x^*), \label{uniform_result}\\ 
\bbe[\Psi(\bar x^k) - \Psi^* ]&\le \alpha^{k/2} (1-\alpha)^{-1} \left(\mu +  2L_f+ \tfrac{ L_f^2}{\mu} \right) P(x^{0}, x^*). \label{uniform_result1}
\end{align}
for any $k \ge 1$. As a consequence, the number of iterations performed by the RPDG method to find a stochastic $\epsilon$-solution
and a stochastic $(\epsilon, \lambda)$-solution of \eqnok{cp}, in terms of the distance to the optimal solution, i.e., $\bbe[P(x^k,x^*)]$, can be bounded by
$K_u(\epsilon, \bar C)$ and $K_u(\lambda \epsilon, \bar C)$, respectively, where 
\[
K_u(\epsilon, \bar C):=\tfrac{(m+1)+\sqrt{(m-1)^2+4m\bar C}}{2}\log \left[(1+\tfrac{L_f}{\mu})\tfrac{P(x^0,x^*)}{\epsilon}\right].
\]
Similarly, the total number of iterations performed by the RPDG method to find a stochastic $\epsilon$-solution
and a stochastic $(\epsilon, \lambda)$-solution of \eqnok{cp}, in terms of the primal optimality gap, 
i.e., $\bbe[\Psi(\bar x^k)-\Psi^*]$, can be bounded by $\tilde K(\epsilon,\bar C)/2$ and $\tilde K(\lambda \epsilon,\bar C)/2$, respectively,
where $\tilde{K}(\epsilon,\bar C)$ is defined in \eqnok{non_uniform_com2}.
%\begin{align*}
%\tilde K_u(\epsilon, \bar C) & := \left[(m+1) + \sqrt{(m-1)^2 + 4 m \bar C} \right] \log \left[2(\mu +2L_f+\tfrac{L_f^2}{\mu})\tfrac{P(x^{0}, x^*)}{\epsilon}\right]. %\label{non_uniform_com2}
%\end{align*}
\end{corollary}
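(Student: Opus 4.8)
The plan is to follow the proof of Corollary~\ref{cor_RPDG1} in structure, only replacing the non-uniform weights by $p_i = 1/m$ and $C$ by $\bar C$ from \eqnok{def_barC}: first verify that the parameters in \eqnok{uniform_stepsize} satisfy the three hypotheses \eqnok{cond_s1}--\eqnok{cond_s3} of Theorem~\ref{theorem_RPDG_s}, then specialize the bounds \eqnok{RPDG_s_main} and \eqnok{RPDG_s_main1}, and finally translate the resulting geometric rate into the iteration counts $K_u$ and $\tilde K(\cdot,\bar C)/2$ by the usual $\log$-and-Markov argument.

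First I would check the hypotheses. Writing $S := \sqrt{(m-1)^2 + 4m\bar C}$, the choice \eqnok{uniform_stepsize} gives $1-\alpha = 2/[(m+1)+S]$ and $1+\tau = [(m+1)+S]/(2m)$, so $(1-\alpha)(1+\tau) = 1/m = p_i$, i.e.\ \eqnok{cond_s1} holds with equality. Likewise $\eta = \mu(S + m - 1)/2$ yields $(1-\alpha)\eta = \mu(S+m-1)/[(m+1)+S] = \alpha\mu$, so \eqnok{cond_s2} also holds with equality; this tightness is the structural feature distinguishing the uniform case from Corollary~\ref{cor_RPDG1}, where one only has $(1-\alpha)\eta = (\alpha-\tfrac12)\mu$. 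For \eqnok{cond_s3}, the factors in $\eta\tau$ telescope: $\eta\tau = \tfrac{\mu}{4m}(S+m-1)(S-(m-1)) = \tfrac{\mu}{4m}\cdot 4m\bar C = \mu\bar C = 4m\max_i L_i$, whence $\eta\tau p_i = 4\max_i L_i \ge 4L_i$.

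With the hypotheses verified, I would substitute into Theorem~\ref{theorem_RPDG_s}. Because \eqnok{cond_s2} is tight, $1 + \tfrac{L_f\alpha}{(1-\alpha)\eta} = 1 + \tfrac{L_f}{\mu}$ exactly, which turns \eqnok{RPDG_s_main} into \eqnok{uniform_result} (sharper than the $1+\tfrac{3L_f}{\mu}$ of Corollary~\ref{cor_RPDG1}). For \eqnok{uniform_result1} I would bound the bracket in \eqnok{RPDG_s_main1} term by term using $(1-\alpha)\eta = \alpha\mu$ together with $\tfrac12 \le \alpha < 1$: the first term $\alpha^{-1}\eta$ equals $(1-\alpha)^{-1}\mu$, the second is at most $(1-\alpha)^{-1}2L_f$ since $3-2\alpha\le 2$, and the third equals $\tfrac{2L_f^2}{\mu} \le (1-\alpha)^{-1}\tfrac{L_f^2}{\mu}$ since $(1-\alpha)^{-1}\ge 2$; summing gives \eqnok{uniform_result1}. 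The complexity bounds then follow exactly as in Corollary~\ref{cor_RPDG1}: using $\log x \le x-1$ to get $-\log\alpha \ge 1-\alpha = 2/[(m+1)+S]$, solving $(1+\tfrac{L_f}{\mu})\alpha^k P(x^0,x^*) \le \epsilon$ for $k$ produces $K_u(\epsilon,\bar C)$, Markov's inequality applied to \eqnok{uniform_result} gives the $(\epsilon,\lambda)$-version $K_u(\lambda\epsilon,\bar C)$, and the same manipulation on \eqnok{uniform_result1} (where the half-power $\alpha^{k/2}$ supplies the extra factor $2$ appearing in \eqnok{non_uniform_com2}) yields the primal-gap counts $\tilde K(\epsilon,\bar C)/2$.

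I do not anticipate a genuine obstacle: the argument is a re-verification of the hypotheses of Theorem~\ref{theorem_RPDG_s} for uniform sampling, followed by elementary constant simplification. The only point requiring care is bookkeeping of the factor of two --- the numerator of $1-\alpha$ is $2$ here versus $1$ in Corollary~\ref{cor_RPDG1} --- which is precisely what makes \eqnok{cond_s1}--\eqnok{cond_s2} tight, sharpens the leading constant to $1+\tfrac{L_f}{\mu}$, and halves the iteration counts. One must also confirm the elementary bound $\alpha \ge \tfrac12$ invoked when estimating \eqnok{RPDG_s_main1}, which follows from \eqnok{uniform_stepsize} by the same estimate that establishes the analogous bound $\alpha\ge\tfrac34$ in Corollary~\ref{cor_RPDG1}.
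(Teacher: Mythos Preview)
Your proposal is correct and follows essentially the same approach as the paper's own proof: verify \eqnok{cond_s1}--\eqnok{cond_s3} for the uniform choice, use the identity $(1-\alpha)\eta=\alpha\mu$ to simplify the constant in \eqnok{RPDG_s_main} to $1+L_f/\mu$, bound the bracket in \eqnok{RPDG_s_main1} by $(1-\alpha)^{-1}(\mu+2L_f+L_f^2/\mu)$, and then invoke the $\log$/Markov argument of Corollary~\ref{cor_RPDG1} for the iteration counts. The only cosmetic difference is that the paper records the slightly stronger bound $\alpha\ge 2/3$ (and also notes $\eta\ge 2\mu$) where you use $\alpha\ge 1/2$; either suffices for the term-by-term estimate of the bracket, and both rest on the same implicit assumption $\bar C\ge 4$, i.e., $m\max_i L_i\ge\mu$.
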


\begin{proof}
It follows from \eqnok{uniform_stepsize} that
\[
(1-\alpha)(1+\tau) = 1/m = p_i, \ \
(1-\alpha) \eta -\alpha \mu = 0, \ \ \mbox{and} \ \ \eta \tau = \mu \bar C \ge 4m L_i,
\]
and hence that the conditions in \eqnok{cond_s1}-\eqnok{cond_s3} are satisfied.
By the identity $(1-\alpha) \eta =\alpha \mu$, we have
\[
1+\tfrac{L_f\alpha}{(1-\alpha)\eta} = 1+\tfrac{L_f}{\mu}.
\]
Using the above bound in \eqnok{RPDG_s_main}, we obtain \eqnok{uniform_result}. Moreover, note that 
$\eta \ge \mu \sqrt{\bar C} \ge 2\mu$ and $2/3\le \alpha \le 1, \forall m\ge 1$ we have
\[
\alpha^{-1}\eta +\tfrac{3-2\alpha}{1-\alpha}L_f+\tfrac{2L_f^2\alpha}{(1-\alpha)\eta}\le (1-\alpha)^{-1}(\mu+2L_f+\tfrac{L_f^2}{\mu}).
\]
Using the above bound in \eqnok{RPDG_s_main1}, we obtain \eqnok{uniform_result1}.
The proofs for the complexity bounds are similar to those in Corollary~\ref{cor_RPDG1} and hence
the details are skipped.
\end{proof}

\vgap

\vgap

Comparing the complexity bounds obtained from Corollaries~\ref{cor_RPDG1} and \ref{cor_RPDG2}
with those of any optimal deterministic first-order method, they differ in a factor of ${\cal O}(\sqrt{mL_f/L})$, 
whenever $\sqrt{mC}\log(1/\epsilon)$ is dominating in \eqnok{non_uniform_com1}.
Clearly, when $L_f$ and $L$ are in the same order of magnitude, RPDG can save up to ${\cal O}(\sqrt{m})$
gradient evaluations for the component function $f_i$ than the deterministic first-order methods. 
However, it should be pointed out that $L_f$ can be much smaller than $L$. In particular, 
when $L_f=L_i, i=1,\ldots,m$, $L_f=L/m$. 
In the next subsection, we will construct examples in 
such extreme cases to obtain the lower complexity bound for general randomized incremental gradient methods.

%with the lower complexity bound in \eqnok{lowerbound}, we conclude that
%RPDG is an optimal randomized incremental gradient method whenever
%the condition number $L/\mu$ is large enough so that the second term in \eqnok{lowerbound}
%dominates the first one. Otherwise, it is optimal up to a logarithmic factor $\log (1/\epsilon)$ and it
%is unclear whether the lower bound and/or the upper bound can be further improved
%in this latter case. 

\subsection{Lower complexity bound for randomized methods}\label{sec_lower_comp}
Our goal in this subsection is to demonstrate that the complexity bounds
obtained in Theorem~\ref{theorem_RPDG_s}, and Corollaries~\ref{cor_RPDG1} and \ref{cor_RPDG2} 
for the RPDG method are essentially not improvable.
Observe that although there exist rich lower complexity bounds in the literature
for deterministic first-order methods (e.g.~\cite{nemyud:83,Nest04}), the study on 
lower complexity bounds for randomized methods are still quite limited.
Recently Agarwal and Bottou~\cite{AgrBott14-1} suggested a lower complexity
bound for minimizing the finite-sum convex optimization problem given in the form of \eqnok{cp}.
However, 
their bounds are developed for deterministic algorithms and hence not applicable to randomized
incremental gradient methods.

%In order to develop valid lower complexity bounds for randomized incremental gradient
%methods, we assume that
To derive the performance limit of the incremental gradient methods, we consider 
a special class of unconstrained and separable strongly convex optimization
problems given in the form of
 \beq \label{worst_problem}
\min_{x_i \in \bbr^{\tilde n}, i = 1, \ldots, m} \left\{\Psi(x) := \tsum_{i=1}^m \left[ f_i(x_i)  + \tfrac{\mu}{2} \|x_i\|_2^2 \right]\right\} .
\eeq
Here $ \tilde n \equiv n / m \in \{1, 2, \ldots \}$ and $\|\cdot\|_2$ denotes standard Euclidean norm.
To fix the notation, we also denote $x = (x_1, \ldots, x_m)$. Moreover, we assume that
$f_i$'s are quadratic functions given by
\beq \label{quadratic_f}
f_i(x_i) = \tfrac{\mu (\cQ-1) }{4} \left[\tfrac{1}{2} \langle A x_i, x_i \rangle - \langle e_1, x_i\rangle \right],
\eeq
where $e_1 := (1, 0, \ldots, 0)$ and $A$ is a symmetric matrix in $\bbr^{\tilde n \times \tilde n}$
given by
\beq \label{defA}
A = \left(
\begin{array}{cccccccc}
2 & -1 & 0 & 0 & \cdots &0 & 0& 0\\
-1& 2  &-1 &0 & \cdots & 0 & 0& 0\\
0& -1 & 2  &-1& \cdots & 0 & 0& 0\\
\cdots&\cdots&\cdots&\cdots&\cdots&\cdots&\cdots\\
0& 0&  0  & 0& \cdots &-1& 2 & -1 \\
0& 0& 0  & 0 & \cdots & 0& -1 & \kappa\\
\end{array}
\right)
\ \ \ \mbox{with} \ \ \
\kappa = \tfrac{\sqrt{\cQ} + 3}{\sqrt{\cQ} + 1}.
\eeq
Compared with the classic worst-case example given in \cite{Nest04},
the tridiagonal matrix $A$ above consists of a different diagonal element $\kappa$ (instead of
$2$). This modification allows us to study problems of finite dimension more conveniently.
It can be easily checked that $A \succeq 0$ and its maximum eigenvalue does not exceeds $4$.
Indeed, for any $s \equiv (s_1, \ldots, s_{\tilde n}) \in \bbr^{\tilde n}$, we have
\begin{align*}
\langle A s, s\rangle 
&= s_1^2 + \tsum_{i=1}^{\tilde n - 1} (s_i - s_{i+1})^2 + (\kappa - 1) s_{\tilde n}^2 \ge 0 \\
\langle A s, s \rangle 
&\le s_1^2 + \tsum_{i=1}^{\tilde n - 1} 2( s_i^2 + s_{i+1}^2) + (\kappa -1) s_{\tilde n}^2  \\
&= 3 s_1^2 + 4 \tsum_{i=2}^{\tilde n - 1} s_i^2 + (\kappa+1) s_{\tilde n}^2 \le 4 \|s\|_2^2, 
\end{align*}
where the last inequality follows from the fact that $\kappa \le 3$. Therefore, for any $\cQ > 1$, the component functions
$f_i$ in \eqnok{quadratic_f} are convex and their gradients are Lipschitz continuous with
constant bounded by $L_i = \mu (\cQ-1)$, $i = 1, \ldots, m$. 

We consider a general class of randomized incremental gradient methods which
sequentially acquire the gradients of a
randomly selected component function $f_{i_t}$ at iteration $t$. More specifically, we assume that
the independent random variables $i_t$, $t = 1, 2, \ldots$, satisfy
\beq \label{prob_rand}
\prob\{i_t = i\} = p_i \ \ \ \mbox{and}  \ \ \ \tsum_{i=1}^m p_i = 1, \ \ p_i \ge 0, i = 1, \ldots, m.
\eeq
Similar to \cite{Nest04}, we assume that these methods generate a sequence of
test points $\{x^k\}$ such that
\beq \label{linear_span}
x^k \in x^0 + { \rm{Lin} }\{\nabla f_{i_1} (x^0), \ldots, \nabla f_{i_k} (x^{k-1}) \},
\eeq
where $\rm {Lin}$ denotes the linear span.

Theorem~\ref{the_lower_bound} below describes the performance limit of
the above randomized incremental gradient methods for solving \eqnok{worst_problem}.

\begin{theorem} \label{the_lower_bound}
Let $x^*$ be the optimal solution of problem \eqnok{worst_problem} and denote
\beq
q := \tfrac{\sqrt{\cQ} - 1}{\sqrt{\cQ} + 1}. \label{def_q1} 
\eeq
Then the iterates $\{x^k\}$ generated by any 
randomized incremental gradient method must satisfy
\beq \label{bnd_rate_below} 
\tfrac{\bbe[\|x^k - x^*\|^2_2]}{\|x^0 - x^*\|^2_2} \ge \tfrac{1}{2} \exp\left(- \tfrac{4 k \sqrt{\cQ} }{m (\sqrt{\cQ} +1)^2 - 4 \sqrt{\cQ}}  \right)
\eeq
for any
\beq  \label{def_N0}
n \ge \underline n(m,k) \equiv  \tfrac{m \log \left[\left(1 - (1 - q^2) / m \right)^k /2 \right]}{2 \log q}.
\eeq
\end{theorem}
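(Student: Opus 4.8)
The plan is to establish the lower bound by analyzing how quickly any randomized incremental gradient method can reduce the distance to the optimum on the separable quadratic instance \eqnok{worst_problem}. Because the problem is fully separable across the blocks $x_1, \ldots, x_m$, and because at each iteration only one randomly chosen component $f_{i_t}$ contributes a gradient, the key observation is that each gradient evaluation of $f_{i_t}$ can extend the ``reachable'' subspace in block $i_t$ by only one additional coordinate direction. This is exactly the classical mechanism underlying Nesterov's worst-case tridiagonal example, but now coupled with the randomization over which block is touched.

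\textbf{First} I would exploit the block-separable structure to reduce to a single-block analysis. Since $\Psi(x) = \tsum_{i=1}^m [f_i(x_i) + \tfrac{\mu}{2}\|x_i\|_2^2]$ decouples, the optimal solution $x^*$ also decouples as $x^* = (x_1^*, \ldots, x_m^*)$, where each $x_i^*$ solves the same $\tilde n$-dimensional strongly convex quadratic. Using the structure of $A$ in \eqnok{defA} together with the choice of $\kappa$ in terms of $\cQ$, I would compute $x_i^*$ explicitly and show its coordinates decay geometrically like $q^j$ with $q$ as in \eqnok{def_q1}; this is precisely why $\kappa$ was tuned, and it makes $\|x^0 - x^*\|_2^2$ and the tail sums tractable. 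The linear-span assumption \eqnok{linear_span} then implies that after the gradients $\nabla f_{i_1}, \ldots, \nabla f_{i_k}$ have been acquired, the iterate $x^k$ restricted to block $i$ can have nonzero entries only in the first $d_i^k$ coordinates, where $d_i^k$ is the number of times block $i$ has been selected up through iteration $k$ (starting from coordinate $1$, since $\nabla f_i(x^0)$ with $x^0=0$ points in the $e_1$ direction within each block). Hence the residual $\|x^k - x^*\|_2^2$ is bounded below by the sum over all blocks of the geometric tail $\tsum_{j > d_i^k} (x_{i,j}^*)^2$, which is of order $q^{2 d_i^k}$ times the block norm.

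\textbf{Next}, I would take the expectation over the random selections $i_1, \ldots, i_k$. Each $d_i^k$ is a $\mathrm{Binomial}(k, p_i)$ random variable, so $\bbe[q^{2 d_i^k}] = (1 - (1-q^2) p_i)^k$ by the binomial moment-generating formula. Summing over $i$ and weighting by the block contributions gives a bound of the form $\bbe[\|x^k - x^*\|_2^2] / \|x^0 - x^*\|_2^2 \gtrsim \tsum_i w_i (1 - (1-q^2)p_i)^k$ for appropriate weights $w_i$. The crucial point is that this lower bound must hold for \emph{every} probability vector $(p_i)$ satisfying \eqnok{prob_rand}, so I would minimize the right-hand side over the simplex. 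By convexity of $p \mapsto (1 - (1-q^2)p)^k$ and symmetry of the worst-case instance (all $L_i$ equal), the minimizer is the uniform distribution $p_i = 1/m$, yielding $(1 - (1-q^2)/m)^k$. Converting $(1 - (1-q^2)/m)^k \ge \exp(\text{linearization})$ and substituting $q = (\sqrt{\cQ}-1)/(\sqrt{\cQ}+1)$ so that $1 - q^2 = 4\sqrt{\cQ}/(\sqrt{\cQ}+1)^2$ produces exactly the exponent in \eqnok{bnd_rate_below}; the factor $\tfrac{1}{2}$ absorbs the ratio of the tail sum to the full norm.

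\textbf{The hard part} will be the bookkeeping that makes the span argument rigorous in the randomized setting: I must verify that acquiring $\nabla f_{i_t}(x^{t-1})$ genuinely extends the reachable coordinate support by at most one in block $i_t$ and by zero in all other blocks, and that this holds no matter how the algorithm adaptively chooses its linear combinations. This requires an inductive claim on the support of $x^k$ within each block, using the tridiagonal (hence ``nearest-neighbor coupling'') structure of $A$ to ensure that a gradient whose support reaches coordinate $d$ can only excite coordinate $d+1$. The dimension requirement \eqnok{def_N0} enters here to guarantee $\tilde n = n/m$ is large enough that the geometric tail has not yet been exhausted by coordinate $d_i^k$ with high probability — i.e., the coordinates the algorithm cannot yet reach still carry enough of the optimal solution's mass to sustain the claimed lower bound. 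Pinning down this threshold precisely, and confirming the minimization over $(p_i)$ indeed lands on the uniform distribution, are the two steps I expect to demand the most care.
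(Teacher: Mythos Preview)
Your proposal is correct and follows essentially the same approach as the paper: the support/span argument via the tridiagonal structure, the explicit $q^j$ form of $x_i^*$, the binomial moment-generating identity $\bbe[q^{2 d_i^k}] = (1-(1-q^2)p_i)^k$, the minimization over the simplex landing at $p_i = 1/m$ by convexity, and the log-linearization to reach the exponential form all match the paper's proof. One small clarification: the dimension requirement \eqnok{def_N0} is not a ``high probability'' condition but a purely deterministic one ensuring $q^{2\tilde n}$ is small enough relative to $[1-(1-q^2)/m]^k$ that the ratio $([1-(1-q^2)/m]^k - q^{2\tilde n})/(1 - q^{2\tilde n})$ is at least $1/2$; no probabilistic tail argument is needed there.
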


\vgap

As an immediate consequence of Theorem~\ref{the_lower_bound}, we
obtain a lower complexity bound for randomized incremental gradient methods.

\begin{corollary} \label{rand_low1}
The number of gradient evaluations performed by any randomized  incremental gradient
methods for finding a solution $\bar x \in X$ of problem \eqnok{cp} such that $\bbe[\|\bar x - x^*\|_2^2 ] \le \epsilon$
cannot be smaller than
\[
\Omega\left\{  \left( \sqrt{ m {\cal C} } + m \right) \log \tfrac{\|x^0 - x^*\|_2^2}{\epsilon} \right\}
\]
if $n$ is sufficiently large, where ${\cal C} = L / \mu$ and $L = \tsum_{i=1}^m L_i$.
\end{corollary}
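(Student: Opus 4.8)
The plan is to instantiate Theorem~\ref{the_lower_bound} on the separable quadratic family \eqnok{worst_problem}--\eqnok{defA} and then invert the rate estimate \eqnok{bnd_rate_below} to read off the minimal number of iterations needed to reach accuracy $\epsilon$. The first observation is that each iteration of a method obeying \eqnok{linear_span} consumes exactly one component gradient $\nabla f_{i_t}$, so after $k$ iterations the number of gradient evaluations is precisely $k$; a lower bound on $k$ is therefore a lower bound on the number of gradient evaluations. The second observation ties the instance's condition number $\cQ$ to the aggregate quantity ${\cal C}=L/\mu$: since each $f_i$ in \eqnok{quadratic_f} has Lipschitz constant $L_i=\mu(\cQ-1)$, we get $L=\tsum_{i=1}^m L_i=m\mu(\cQ-1)$, whence
\[
\cQ = 1+\tfrac{{\cal C}}{m}, \qquad \sqrt{\cQ}\ \ge\ \max\left\{1,\ \sqrt{\tfrac{{\cal C}}{m}}\right\}.
\]

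Next I would impose $\bbe[\|x^k-x^*\|_2^2]\le\epsilon$ and compare it with \eqnok{bnd_rate_below}: for the theorem's lower estimate to be consistent with accuracy $\epsilon$, its right-hand side must not exceed $\epsilon/\|x^0-x^*\|_2^2$. Taking logarithms and rearranging yields the necessary condition
\[
k\ \ge\ \tfrac{m(\sqrt{\cQ}+1)^2-4\sqrt{\cQ}}{4\sqrt{\cQ}}\,\log\tfrac{\|x^0-x^*\|_2^2}{2\epsilon}.
\]
Expanding $(\sqrt{\cQ}+1)^2=\cQ+2\sqrt{\cQ}+1$, the coefficient of the logarithm becomes $\tfrac{m\sqrt{\cQ}}{4}+\tfrac{m}{2}+\tfrac{m}{4\sqrt{\cQ}}-1$. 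Using $\sqrt{\cQ}\ge 1$ this coefficient is at least $\tfrac{3m}{4}-1$, while using $\sqrt{\cQ}\ge\sqrt{{\cal C}/m}$ it is at least $\tfrac14\sqrt{m{\cal C}}$; combining the two via $\max\{a,b\}\ge\tfrac12(a+b)$ gives a coefficient of order $\Omega(\sqrt{m{\cal C}}+m)$. Since $\log\tfrac{\|x^0-x^*\|_2^2}{2\epsilon}=\Omega\!\left(\log\tfrac{\|x^0-x^*\|_2^2}{\epsilon}\right)$ in the regime $\epsilon<\tfrac12\|x^0-x^*\|_2^2$ of interest, this produces the claimed bound.

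The remaining and most delicate point is the dimension hypothesis: Theorem~\ref{the_lower_bound} is valid only when $n\ge\underline n(m,k)$, so the hard part will be to argue that ``$n$ sufficiently large'' legitimately covers the relevant range of $k$ without circularity. Here I would note that $\underline n(m,k)$ in \eqnok{def_N0} is nondecreasing in $k$, because the base $1-(1-q^2)/m$ lies in $(0,1)$ and $\log q<0$. Writing $K$ for the lower bound derived above, it then suffices to fix $n\ge\underline n(m,\lceil K\rceil)$ \emph{before} running the algorithm: for any stopping index $k\le K$ we have $n\ge\underline n(m,k)$, so \eqnok{bnd_rate_below} is in force and forces $k\ge K$, ruling out earlier termination. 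The main obstacle is thus not a single inequality but keeping this quantifier structure correct, together with the routine but slightly fiddly check that the two-sided estimate $\sqrt{\cQ}\ge\max\{1,\sqrt{{\cal C}/m}\}$ actually yields the additive form $\sqrt{m{\cal C}}+m$ rather than merely the larger of its two terms.
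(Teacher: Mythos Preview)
Your proposal is correct and follows essentially the same route as the paper: invert the rate estimate \eqnok{bnd_rate_below} from Theorem~\ref{the_lower_bound}, then use $L_i=\mu(\cQ-1)$ to convert $\cQ$ into ${\cal C}=L/\mu$ and read off the $\Omega(\sqrt{m{\cal C}}+m)$ coefficient. The paper's algebra is marginally more direct---it substitutes $m\sqrt{\cQ}=\sqrt{m^2+m{\cal C}}$ in one step rather than splitting into two cases and recombining via $\max\{a,b\}\ge\tfrac12(a+b)$---but your treatment of the dimension hypothesis (monotonicity of $\underline n(m,k)$ in $k$, fixing $n\ge\underline n(m,\lceil K\rceil)$ up front) is actually more careful than the paper's one-line statement.
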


\begin{proof}
It follows from \eqnok{bnd_rate_below} that the number of iterations $k$ 
required by any randomized incremental gradient methods to find an approximate solution $\bar x$ must satisfy
\beq \label{bnd_k_rnd}
k \ge \left(\tfrac{m (\sqrt{\cQ} +1)^2 } {4 \sqrt{\cQ} } - 1\right) \log \tfrac{\|x^0 - x^*\|_2^2}{2 \epsilon}
\ge \left[\tfrac{m}{2} \left( \tfrac{\sqrt{\cQ}}{2} +1 \right) - 1 \right] \log \tfrac{\|x^0 - x^*\|_2^2}{2 \epsilon}  .
\eeq
Noting that for the worst-case instance in \eqnok{worst_problem}, 
we have $L_i = \mu (\cQ -1)$, $i = 1, \ldots, m$, and hence that
$L = \tsum_{i= 1}^m L_i = m \mu (Q-1)$. Using this relation, we conclude that
\[
k \ge \left[\tfrac{1}{2} \left( \tfrac{\sqrt{ m {\cal C} + m^2}}{2} + m \right) - 1 \right] \log \tfrac{\|x^0 - x^*\|_2^2}{2 \epsilon}
=: \underline k.
\]
The above bound holds when $n \ge  \underline n(m,\underline k)$.
\end{proof}

\vgap

In view of Theorem~\ref{the_lower_bound}, we can also
derive a lower complexity bound for randomized block coordinate descent methods,
which update one randomly selected block of variables at each iteration for $\min_{x \in X} \Psi(x)$. Here
$\Psi$ is smooth and strongly convex such that
\[
\tfrac{\mu_\Psi}{2} \|x - y\|_2^2 \le \Psi(x) - \Psi(y) - \langle \nabla \Psi(y), x- y \rangle \le \tfrac{L_\Psi}{2} \|x - y\|_2^2, \forall x, y \in X.
\]

\begin{corollary} \label{rand_low2}
The number of iterations performed by any randomized  block coordinate descent methods
for finding a solution $\bar x \in X$ of $\min_{x \in X} \Psi(x)$ such that $\bbe[\|\bar x - x^*\|_2^2 ] \le \epsilon$
cannot be smaller than
\[
\Omega\left\{  \left( m \sqrt{ \cQ_\Psi} \right) \log \tfrac{\|x^0 - x^*\|_2^2}{\epsilon} \right\}
\]
if $n$ is sufficiently large, where $\cQ_\Psi = L_\Psi /\mu_\Psi$ denotes the condition number of $\Psi$.
\end{corollary}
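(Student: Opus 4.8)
The plan is to reuse the block-separable worst-case family \eqnok{worst_problem} that already powers Theorem~\ref{the_lower_bound}, exploiting the fact that on a separable objective a randomized block coordinate descent method coincides with a randomized incremental gradient method. First I would note that in \eqnok{worst_problem} the decision variable splits as $x=(x_1,\ldots,x_m)$ with $x_i\in\bbr^{\tilde n}$, so that $\Psi(x)=\tsum_{i=1}^m g_i(x_i)$ with $g_i(x_i):=f_i(x_i)+\tfrac{\mu}{2}\|x_i\|_2^2$. Choosing the $m$ natural blocks to be the groups $x_1,\ldots,x_m$, the partial gradient with respect to block $i$ is $\nabla_{x_i}\Psi(x)=\nabla f_i(x_i)+\mu x_i$, which involves only the $i$-th component. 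Hence a block coordinate update of the randomly chosen block $i_t$ reveals exactly the single-component first-order information used by one step of a randomized incremental gradient method, and the iterates generated by block coordinate descent on this instance lie in the same linear span prescribed by \eqnok{linear_span} (with the random block choices obeying the same distributional assumption \eqnok{prob_rand}). The construction behind Theorem~\ref{the_lower_bound} therefore applies verbatim, forcing the iterates to satisfy the lower bound \eqnok{bnd_rate_below} whenever $n\ge\underline n(m,k)$.

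Next I would convert the bound from the construction parameter $\cQ$ into the condition number $\cQ_\Psi$. Because the Hessian of $\Psi$ is block diagonal with identical blocks $H=\tfrac{\mu(\cQ-1)}{4}A+\mu I$, and because $0\preceq A$ with $\lambda_{\max}(A)\le 4$, each block obeys $\mu I\preceq H\preceq \mu\cQ\,I$; moreover $\lambda_{\min}(A)\to 0$ and $\lambda_{\max}(A)\to 4$ as $\tilde n=n/m\to\infty$. Consequently $\mu_\Psi=\mu$ and $L_\Psi\le \mu\cQ$, so that $\cQ_\Psi=L_\Psi/\mu_\Psi\to\cQ$ in the regime where $n$ is large, making $\cQ$ and $\cQ_\Psi$ interchangeable up to lower-order terms. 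Substituting $\cQ_\Psi$ for $\cQ$ in the iteration estimate \eqnok{bnd_k_rnd} obtained in the proof of Corollary~\ref{rand_low1}, namely $k\ge\bigl(\tfrac{m(\sqrt{\cQ}+1)^2}{4\sqrt{\cQ}}-1\bigr)\log\tfrac{\|x^0-x^*\|_2^2}{2\epsilon}$, and retaining only the dominant term $\tfrac{m\sqrt{\cQ}}{4}$, delivers the advertised $\Omega\bigl(m\sqrt{\cQ_\Psi}\log\tfrac{1}{\epsilon}\bigr)$ lower bound.

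I expect the crux to lie in the first step rather than in the estimation. One must argue that, specialized to the separable instance, the admissible class of block coordinate descent methods cannot extract more information per iteration than an incremental gradient method, so that the span argument transfers without loss. This amounts to verifying that even an exact minimization of $g_{i_t}$ over block $i_t$ keeps $x^t_{i_t}$ inside the Krylov-type subspace generated by the successively revealed gradients of $f_{i_t}$, and that the independence of the block selections matches the hypotheses already imposed in Theorem~\ref{the_lower_bound}. Once this equivalence is established, the expected-distance bound and the condition-number identification follow immediately, and the required lower bound on $n$ is inherited directly from \eqnok{def_N0}.
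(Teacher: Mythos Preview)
Your approach matches the paper's: the separable worst-case instance \eqnok{worst_problem} makes a randomized block coordinate descent step on block $i_t$ indistinguishable from a randomized incremental gradient step on component $f_{i_t}$, so Theorem~\ref{the_lower_bound} and the iteration bound \eqnok{bnd_k_rnd} transfer directly. Your explicit identification of $\cQ_\Psi$ with the construction parameter $\cQ$ via the eigenvalue bounds on $A$ is a useful addition that the paper leaves implicit.

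One caution on your closing remark: the claim that ``even an exact minimization of $g_{i_t}$ over block $i_t$ keeps $x^t_{i_t}$ inside the Krylov-type subspace'' is false on this instance. Because the objective is fully separable, exact minimization over block $i_t$ returns $x_{i_t}^*$ in a single step, so after each block has been selected once (an expected $O(m\log m)$ iterations under uniform sampling) the method has reached the exact optimum, which would contradict any $\Omega(m\sqrt{\cQ_\Psi}\log(1/\epsilon))$ bound. The corollary, like Theorem~\ref{the_lower_bound}, must be read as restricting to first-order block methods whose iterates lie in the span of previously queried partial gradients---the block analogue of \eqnok{linear_span}. Under that (unstated) assumption the equivalence you describe holds and your argument goes through; just do not try to extend it to proximal or exact-minimization block updates.
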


\begin{proof}
The worst-case instances in \eqnok{worst_problem} have a block separable structure. Therefore,
any randomized incremental gradient methods are equivalent to randomized block coordinate descent methods.
The result then immediately follows from \eqnok{bnd_k_rnd}.
\end{proof}

%To the best of our knowledge, the lower complexity bounds obtained in Corollaries~\ref{rand_low1} and \ref{rand_low2}
%are new in the literature.

\setcounter{equation}{0}
\section{Generalization of randomized primal-dual gradient methods} \label{sec_general}
In this section, we generalize the RPDG method for solving a few different types of 
convex optimization problems which are not necessarily smooth 
and strongly convex.

\subsection{Smooth problems with bounded feasible sets} \label{sec_smooth_bnd}
Our goal in this subsection is to generalize RPDG for solving smooth problems without strong convexity (i.e., $\mu = 0$).
Different from the deterministic PDG method, it is difficult to develop a simple stepsize
policy for $\{\tau_t\}$, $\{\eta_t\}$, and $\{\alpha_t\}$ which can guarantee 
the convergence of this method unless a weaker termination criterion is used (see \cite{DangLan14-1}).
In order to obtain stronger convergence results, we will discuss a different approach obtained by applying the RPDG 
method to a slightly perturbed problem 
of \eqnok{cp}. 

In order to apply this perturbation approach, we will assume that $X$ is bounded (see Subsection~\ref{sec_unconstrained_nonsmooth}
for possible extensions), i.e., given $x_0\in X$, $\exists \Omega_X \ge 0$ s.t.
\beq \label{def_omega}
\max_{x \in X} P_\w(x_0, x) \le \Omega_X^2.
\eeq
%Moreover, we assume that $\Psi$ is Lipschitz continuous over $X$ with constant $M_\Psi$ (i.e., \eqnok{def_M_Psi} holds), and
Now we define the perturbation problem as
\beq \label{def_perturb}
\Psi_\delta^* := \min_{x \in X} \left\{ \Psi_\delta(x) := f(x) + h(x) + \delta P_\w(x_0, x)\right\},
\eeq
for some fixed $\delta > 0$. It is well-known that an approximate solution of \eqnok{def_perturb} will also be
an approximate solution of \eqnok{cp} if $\delta$ is sufficiently small. More specifically, it is easy to verify that
\begin{align} 
\Psi^* \le &\Psi_\delta^* \le \Psi^* + \delta \Omega_X^2, \label{relate_perturb1}\\
\Psi(x) \le &\Psi_\delta(x) \le \Psi(x) + \delta \Omega_X^2, \ \ \forall x \in X. \label{relate_perturb2}
\end{align}
 
The following result describes the complexity associated with this perturbation approach
for solving smooth problems without strong convexity (i.e., $\mu = 0$).

\begin{proposition} \label{prop_smooth}
Let us apply the RPDG method with the parameter settings in Corollary~\ref{cor_RPDG1}
to the perturbation problem \eqnok{def_perturb} with
\beq \label{def_delta}
\delta = \tfrac{\epsilon}{2 \Omega_X^2}, 
\eeq
for some $\epsilon > 0$.  Then we can find a solution $\bar x \in X$ s.t. 
$\bbe[\Psi(\bar x) - \Psi^*] \le \epsilon$ in at most 
\beq
{\cal O} \left\{\left(m + \sqrt{\tfrac{m L \Omega_X^2}{\epsilon}} \right) \log \tfrac{mL_f \Omega_X}{\epsilon} \right\}\label{non_strong_com1}
\eeq
iterations. Moreover, we can find a solution $\bar x \in X$ s.t.
$\Prob\{\Psi(\bar x) - \Psi^* > \epsilon \} \le \lambda$ for any $\lambda \in (0,1)$ in at most
\beq
{\cal O} \left\{\left(m + \sqrt{\tfrac{m L \Omega_X^2}{\epsilon}} \right) \log \tfrac{mL_f \Omega_X}{\lambda \epsilon}\right\} \label{non_strong_com2}
\eeq
iterations.
\end{proposition}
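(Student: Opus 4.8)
The plan is to exploit the fact that the perturbation problem \eqnok{def_perturb} is itself an instance of \eqnok{cp} with a strongly convex objective, so that Corollary~\ref{cor_RPDG1} applies directly. First I would observe that $P_\w(x_0,\cdot)$ is strongly convex with modulus $1$ w.r.t.\ $\|\cdot\|$ (inherited from $\w$), so that $\Psi_\delta$ has exactly the form of \eqnok{cp} with the smooth components $f_i$ and the term $h$ unchanged, the regularizer $\w$ replaced by $\tilde\w := P_\w(x_0,\cdot)$, and the strong-convexity parameter $\mu$ replaced by $\delta>0$. The Lipschitz constants $L_i$, and hence $L$ and $L_f$, are unaffected by the perturbation, so the non-uniform distribution \eqnok{nonuniform} and the parameter choices of Corollary~\ref{cor_RPDG1} carry over verbatim with $C$ in \eqnok{def_C} replaced by $C_\delta := 8L/\delta$. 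A small point to verify is that the prox-function of $\tilde\w$ based at $x_0$ coincides with $P_\w(x_0,\cdot)$ (since $\tilde\w(x_0)=0$ and one may take $\tilde\w'(x_0)=0$), so that $P(x^0,x^*_\delta)\le\Omega_X^2$ by \eqnok{def_omega}, where $x^*_\delta$ is the optimal solution of \eqnok{def_perturb} and $x^0=x_0$.

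Next I would transfer the optimality guarantee from the perturbed problem back to \eqnok{cp}. Combining $\Psi(\bar x)\le\Psi_\delta(\bar x)$ from \eqnok{relate_perturb2} with $\Psi_\delta^*\le\Psi^*+\delta\Omega_X^2$ from \eqnok{relate_perturb1} yields, for any $\bar x\in X$,
\[
\Psi(\bar x)-\Psi^* \le [\Psi_\delta(\bar x)-\Psi_\delta^*] + \delta\Omega_X^2,
\]
and taking expectations gives $\bbe[\Psi(\bar x)-\Psi^*]\le \bbe[\Psi_\delta(\bar x)-\Psi_\delta^*]+\delta\Omega_X^2$. With the choice $\delta=\epsilon/(2\Omega_X^2)$ in \eqnok{def_delta} the bias term equals $\epsilon/2$, so it suffices to run RPDG on \eqnok{def_perturb} until $\bbe[\Psi_\delta(\bar x)-\Psi_\delta^*]\le\epsilon/2$, which then forces $\bbe[\Psi(\bar x)-\Psi^*]\le\epsilon$. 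By the primal-gap version of Corollary~\ref{cor_RPDG1} this requires at most $\tilde K(\epsilon/2, C_\delta)$ iterations, with $\tilde K$ as in \eqnok{non_uniform_com2} and the substitutions $\mu\to\delta$, $C\to C_\delta$, $P(x^0,x^*)\to\Omega_X^2$.

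It then remains to substitute $\delta=\epsilon/(2\Omega_X^2)$ and simplify. The leading factor is immediate: since $4mC_\delta=64\,mL\Omega_X^2/\epsilon$, we get $(m+1)+\sqrt{(m-1)^2+4mC_\delta}={\cal O}(m+\sqrt{mL\Omega_X^2/\epsilon})$, matching the prefactor in \eqnok{non_strong_com1}. The high-probability claim \eqnok{non_strong_com2} follows in the same manner by applying Markov's inequality to $\bbe[\Psi_\delta(\bar x)-\Psi_\delta^*]$: running until this expectation is at most $\lambda\epsilon/2$ gives $\Prob\{\Psi_\delta(\bar x)-\Psi_\delta^*>\epsilon/2\}\le\lambda$, and hence $\Prob\{\Psi(\bar x)-\Psi^*>\epsilon\}\le\lambda$, at the cost of $\tilde K(\lambda\epsilon/2, C_\delta)$ iterations and an extra $\log(1/\lambda)$ inside the logarithm.

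I expect the only real obstacle to be the bookkeeping in the logarithmic factor of $\tilde K(\epsilon/2,C_\delta)$. After substitution its argument is $4(\delta+2L_f+L_f^2/\delta)(m+\sqrt{mC_\delta})\Omega_X^2/\epsilon$, in which the term $L_f^2/\delta=2L_f^2\Omega_X^2/\epsilon$ grows as $\epsilon\to 0$; I would bound this argument by a fixed polynomial in $m,L,L_f,\Omega_X,1/\epsilon$ and then use that each such factor is itself polynomially bounded in $mL_f\Omega_X/\epsilon$, so that the whole logarithm collapses to ${\cal O}(\log(mL_f\Omega_X/\epsilon))$. This is routine but is the step where one must carefully absorb the various powers, and the dependence on $L$, into constant multiples of the stated logarithm.
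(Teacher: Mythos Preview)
Your proposal is correct and follows essentially the same approach as the paper: cast \eqnok{def_perturb} as an instance of \eqnok{cp} with strong-convexity parameter $\delta$, invoke Corollary~\ref{cor_RPDG1} to drive $\bbe[\Psi_\delta(\bar x)-\Psi_\delta^*]\le\epsilon/2$, and absorb the $\delta\Omega_X^2=\epsilon/2$ bias via \eqnok{relate_perturb1}--\eqnok{relate_perturb2}. The only cosmetic difference is that the paper puts the linear part $-\delta\langle\w'(x_0),x\rangle$ into $h$ and keeps $\w$ unchanged, whereas you replace $\w$ by $P_\w(x_0,\cdot)$; these are equivalent and both yield $C_\delta=8L/\delta=16L\Omega_X^2/\epsilon$, matching the paper's constant.
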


\begin{proof}
Let $x^*_\delta$ be the optimal solution of \eqnok{def_perturb}.
Denote $C := 16 L \Omega_X^2 / \epsilon$ and
\[
K := 2\left[(m+1) + \sqrt{(m-1)^2 + 4 m C} \right] \log \left[(m+\sqrt{mC})(\delta+2L_f+\tfrac{L_f^2}{\delta})\tfrac{4 \Omega_X^2}{\epsilon}\right].
\]
It can be easily seen that 
\begin{align*}
\Psi(\bar x^K) - \Psi^* 
&\le \Psi_{\delta}(\bar x^K) - \Psi_\delta^* + \delta \Omega_X^2 
 =\Psi_{\delta}(\bar x^K) - \Psi_\delta^*  + \tfrac{\epsilon}{2}.
\end{align*}
Note that problem \eqnok{def_perturb} is given in the form of \eqnok{cp} with the strongly convex modulus
$\mu = \delta$, and $h(x) = h(x) - \delta \langle \w'(x_0), x \rangle$.
Hence by applying Corollary~\ref{cor_RPDG1}, we have
\[
\bbe[\Psi_{\delta}(\bar x^{K}) - \Psi_\delta^*] 
\le \tfrac{\epsilon}{2}.
\]
Combining these two inequalities, we have $\bbe[\Psi(\bar x^K) - \Psi^*] \le \epsilon$, which implies the bound in \eqnok{non_strong_com1}.
The bound in \eqnok{non_strong_com2} can be shown similarly and hence the details are skipped.
\end{proof}

\vgap

Observe that if we apply a deterministic optimal first-order method (e.g., Nesterov's method or the PDG method), the total number of gradient evaluations for 
$\nabla f_i$, $i = 1, \ldots, m$, would be given by
\[
m \sqrt{\tfrac{L_f \Omega_X^2}{\epsilon}}.
\]
Comparing this bound with \eqnok{non_strong_com1}, we can see that the number of gradient evaluations performed 
by the RPDG method can be ${\cal O} \left(\sqrt {m}\log^{-1} (mL_f \Omega_X/\epsilon)\right)$ times
smaller than these deterministic methods when $L$ and $L_f$ are in the same order of magnitude.

\subsection{Structured nonsmooth problems}
In this subsection, we assume that the smooth components $f_i$ are nonsmooth but can be approximated closely by
smooth ones. More specifically, we assume that
\beq \label{nonsmooth_f}
f_i(x) := \max_{y_i \in Y_i} \langle A_i x, y_i\rangle - q_i(y_i).  
\eeq
Nesterov in an important work~\cite{Nest05-1} shows that we can approximate $f_i(x)$ and $f$, respectively, by
\beq \label{approximat_f}
\tilde f_i(x, \delta) := \max_{y_i \in Y_i} \langle A_i x, y_i\rangle - q_i(y_i) - \delta v_i(y_i) \ \ \mbox{and} \ \ \tilde f(x, \delta) = \tsum_{i=1}^m \tilde f_i(x, \delta),
\eeq
where $v_i(y_i)$ is a strongly convex function with modulus $1$ such that
\beq
0 \le v_i(y_i) \le \Omega_{Y_i}^2, \ \ \ \forall y_i \in Y_i.
\eeq
In particular, we can easily show that
\beq \label{closeness}
\tilde f_i(x, \delta) \le f_i(x) \le \tilde f_i(x, \delta) + \delta \Omega_{Y_i}^2 \ \ \mbox{and} \ \ \tilde f(x, \delta) \le f(x) \le \tilde f(x, \delta) + \delta \Omega_Y^2,
\eeq
for any $x \in X$, where $\Omega_Y^2 = \tsum_{i=1}^m \Omega_{Y_i}^2$.
Moreover, $f_i(\cdot, \delta)$ and $f(\cdot, \delta)$ are continuously differentiable and their gradients are Lipschitz continuous with
constants given by
\beq \label{Lipshitz_tf}
\tilde L_i = \frac{\|A_i\|^2}{\delta} \ \ \ \mbox{and} \ \ \ \tilde L = \tfrac{\tsum_{i=1}^m \|A_i\|^2}{\delta} = \tfrac{\|A\|^2}{\delta},
\eeq
respectively. 
As a consequence, we can apply the RPDG method to solve the approximation problem
\beq \label{app_cp}
\tilde \Psi^*_\delta := \min_{x \in X} \left\{ \tilde \Psi_\delta(x) := \tilde f(x, \delta) + h(x) + \mu \w(x)\right\}.
\eeq

The following result provides complexity bounds of the RPDG method 
for solving the above structured nonsmooth problems for the case when $\mu > 0$.

\begin{proposition} \label{prop_nonsmooth1}
Let us apply the RPDG method with the parameter settings in Corollary~\ref{cor_RPDG1}
to the approximation problem \eqnok{app_cp} with 
\beq \label{def_delta_smoothing}
\delta = \tfrac{\epsilon}{2 \Omega_Y^2},
\eeq
for some $\epsilon > 0$. 
Then we can find a solution $\bar x \in X$ s.t. $\bbe[\Psi(\bar x) - \Psi^*] \le \epsilon$ in
at most
\beq \label{smoothing_bnd1}
{\cal O} \left\{ \|A\| \Omega_Y \sqrt{\tfrac{m }{\mu \epsilon}} \log \tfrac{m\|A\| \Omega_X \Omega_Y}{\mu \epsilon}\right\}
\eeq 
iterations.
Moreover, we can find a solution $\bar x \in X$
s.t. $\Prob\{ \Psi(\bar x) - \Psi^*> \epsilon\} \le \lambda$ for any $\lambda \in (0,1)$ in at most
\beq \label{smoothing_bnd2}
{\cal O} \left\{
 \|A\| \Omega_Y \sqrt{\tfrac{m }{\mu \epsilon}} \log \tfrac{m\|A\| \Omega_X \Omega_Y}{\lambda \mu \epsilon}\right\}
\eeq
iterations.
\end{proposition}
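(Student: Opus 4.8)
The plan is to reduce the claim to a single invocation of Corollary~\ref{cor_RPDG1} applied to the smooth surrogate \eqnok{app_cp}, absorbing the smoothing error into an additive $\epsilon/2$ term. First I would use the two-sided estimate \eqnok{closeness} to control the \emph{original} primal gap by the surrogate gap. Since $\tilde f(\cdot,\delta)\le f\le \tilde f(\cdot,\delta)+\delta\Omega_Y^2$, adding $h+\mu\w$ gives $\tilde\Psi_\delta(x)\le \Psi(x)\le \tilde\Psi_\delta(x)+\delta\Omega_Y^2$ for every $x\in X$, and taking minima yields $\tilde\Psi_\delta^*\le \Psi^*\le \tilde\Psi_\delta^*+\delta\Omega_Y^2$. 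Evaluating the upper estimate at the returned point $\bar x$ and the lower estimate at the optimum gives
\[
\Psi(\bar x)-\Psi^* \le \left[\tilde\Psi_\delta(\bar x)-\tilde\Psi_\delta^*\right] + \delta\Omega_Y^2.
\]
With the choice $\delta=\epsilon/(2\Omega_Y^2)$ from \eqnok{def_delta_smoothing}, the last term equals $\epsilon/2$, so it suffices to drive the surrogate gap below $\epsilon/2$, i.e. to produce $\bar x$ with $\bbe[\tilde\Psi_\delta(\bar x)-\tilde\Psi_\delta^*]\le \epsilon/2$.

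Next I would note that \eqnok{app_cp} is itself an instance of \eqnok{cp}: its smooth components $\tilde f_i(\cdot,\delta)$ are convex with Lipschitz-continuous gradients of constant $\tilde L_i=\|A_i\|^2/\delta$ by \eqnok{Lipshitz_tf}, its total smoothness constant is $\tilde L=\|A\|^2/\delta$, and the strong convexity modulus remains $\mu>0$. Hence Corollary~\ref{cor_RPDG1} applies verbatim with $L$ replaced by $\tilde L$ and $L_f$ by $\tilde L_f\le \tilde L$, so that $C=8\tilde L/\mu=8\|A\|^2/(\mu\delta)$. The iteration count to reach a stochastic $(\epsilon/2)$-solution in the primal gap is therefore $\tilde K(\epsilon/2,C)$ as given in \eqnok{non_uniform_com2}.

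Finally I would substitute $\delta=\epsilon/(2\Omega_Y^2)$. This gives $C=16\|A\|^2\Omega_Y^2/(\mu\epsilon)$, so the governing iteration term is $\sqrt{mC}=4\|A\|\Omega_Y\sqrt{m/(\mu\epsilon)}$; as $\epsilon\to 0$ the effective condition number $\tilde L/\mu$ blows up, so $\sqrt{mC}$ dominates the additive $m$ term, which is why only the $\sqrt{m/(\mu\epsilon)}$ factor survives in \eqnok{smoothing_bnd1}. For the logarithmic factor I would bound $P(x^0,x^*)\le \Omega_X^2$ via \eqnok{def_omega} and $\tilde L_f\le \tilde L=2\|A\|^2\Omega_Y^2/\epsilon$, and observe that the argument of the logarithm in \eqnok{non_uniform_com2} becomes a fixed polynomial in $m,\|A\|,\Omega_X,\Omega_Y,1/\mu,1/\epsilon$, whose logarithm collapses to ${\cal O}(\log(m\|A\|\Omega_X\Omega_Y/(\mu\epsilon)))$, yielding \eqnok{smoothing_bnd1}. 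The high-probability statement \eqnok{smoothing_bnd2} follows identically from the $(\epsilon,\lambda)$-solution count $\tilde K(\lambda\epsilon/2,C)$, which merely replaces $\epsilon$ by $\lambda\epsilon$ inside the logarithm.

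The main obstacle is purely the bookkeeping in this last step: one must check that every polynomial factor inside the logarithm — in particular the $\tilde L_f^2/\mu$ term, which scales like $\epsilon^{-2}$ after substitution — collapses cleanly into the single $\log(m\|A\|\Omega_X\Omega_Y/(\mu\epsilon))$ factor, and that $\sqrt{mC}$ rather than $m$ is the dominant term so that the compact form \eqnok{smoothing_bnd1} is legitimate. The reduction to the surrogate problem is immediate from \eqnok{closeness} and requires no new estimate.
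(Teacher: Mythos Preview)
Your proposal is correct and follows essentially the same route as the paper's own proof: reduce $\Psi(\bar x)-\Psi^*$ to $\tilde\Psi_\delta(\bar x)-\tilde\Psi_\delta^*+\epsilon/2$ via \eqnok{closeness}, then invoke Corollary~\ref{cor_RPDG1} on the smooth surrogate \eqnok{app_cp} with $\tilde L=\|A\|^2/\delta$ and simplify. Your bookkeeping discussion (why $\sqrt{mC}$ dominates $m$, and why the polynomial factors inside the log collapse) is in fact more explicit than the paper, which simply writes the resulting ${\cal O}$-expression directly.
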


\begin{proof}
It follows from \eqnok{closeness} and \eqnok{app_cp} that
\beq \label{app_nontemp}
\Psi(\bar x^k) - \Psi^* 
\le \tilde \Psi_\delta(\bar x^k) - \tilde \Psi_\delta^* + \delta \Omega_Y^2
 = \tilde \Psi_\delta(\bar x^k) - \tilde \Psi_\delta^* + \tfrac{\epsilon}{2}.
\eeq
Using relation \eqnok{Lipshitz_tf} and Corollaries~\ref{cor_RPDG1}, we conclude that a solution 
$\bar x^k \in X$ satisfying $\bbe[ \tilde \Psi_\delta(\bar x^k) - \tilde \Psi_\delta^* ] \le \epsilon/2$
can be found in 
\[
{\cal O} \left\{ \|A\| \Omega_Y \sqrt{\tfrac{m }{\mu \epsilon}} \log \left[(m+\sqrt{\tfrac{m\tilde L}{\mu}})\left(\mu+ 2\tilde L + \tfrac{\tilde L^2}{\mu}\right) \tfrac{\Omega_X^2 }{\epsilon}\right] \right\}
\]
iterations. This observation together with \eqnok{app_nontemp} and the definition of $\tilde L$ in \eqnok{Lipshitz_tf}
then imply the bound in \eqnok{smoothing_bnd1}.
%a solution $\bar x \in X$ s.t. $\bbe[\Psi(\bar x) - \Psi^*] \le \epsilon$ can be found in ${\cal O}(K_{ns}(\epsilon))$ iterations.
The bound in \eqnok{smoothing_bnd2} follows similarly from \eqnok{app_nontemp} and
Corollaries~\ref{cor_RPDG1}, and hence the details are skipped.
\end{proof}

\vgap

The following result holds for  the RPDG method 
applied to the above structured nonsmooth problems when $\mu = 0$.
\begin{proposition} \label{prop_nonsmooth2}
Let us apply the RPDG method with the parameter settings in Corollary~\ref{cor_RPDG1}
to the approximation problem \eqnok{app_cp} with $\delta$ in \eqnok{def_delta_smoothing}
for some $\epsilon > 0$.
Then we can find a solution $\bar x \in X$ s.t. $\bbe[\Psi(\bar x) - \Psi^*] \le \epsilon$ in at most
\[
{\cal O}\left\{\tfrac{\sqrt{m} \|A\| \Omega_X \Omega_Y}{\epsilon} \log \tfrac{m\|A\| \Omega_X \Omega_Y}{\epsilon} \right\}
\] 
iterations.
Moreover, we can find a solution $\bar x \in X$ 
s.t. $\Prob\{ \Psi(\bar x) - \Psi^*> \epsilon\} \le \lambda$ for any $\lambda \in (0,1)$ in at most
\[
{\cal O}\left\{\tfrac{\sqrt{m} \|A\| \Omega_X \Omega_Y}{\epsilon} \log \tfrac{m\|A\| \Omega_X \Omega_Y}{\lambda \epsilon} \right\}
\] 
iterations.
\end{proposition}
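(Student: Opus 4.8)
The plan is to compose the smoothing reduction already set up in \eqnok{closeness}--\eqnok{Lipshitz_tf} with the perturbation argument of Proposition~\ref{prop_smooth}, since after smoothing the approximation problem \eqnok{app_cp} is smooth but still lacks strong convexity (i.e. $\mu = 0$), so Corollary~\ref{cor_RPDG1} cannot be applied directly. First I would reuse the opening of the proof of Proposition~\ref{prop_nonsmooth1} verbatim: relation \eqnok{closeness} together with the choice $\delta = \epsilon/(2\Omega_Y^2)$ from \eqnok{def_delta_smoothing} gives
\[
\Psi(\bar x^k) - \Psi^* \le \tilde \Psi_\delta(\bar x^k) - \tilde \Psi_\delta^* + \delta \Omega_Y^2 = \tilde \Psi_\delta(\bar x^k) - \tilde \Psi_\delta^* + \tfrac{\epsilon}{2},
\]
so it suffices to drive the optimality gap of the smooth problem \eqnok{app_cp} below $\epsilon/2$.

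Next I would apply Proposition~\ref{prop_smooth} to \eqnok{app_cp} with target accuracy $\epsilon/2$, treating $\tilde f(\cdot,\delta)$ as the smooth part whose aggregate Lipschitz constant is $\tilde L = \|A\|^2/\delta$ by \eqnok{Lipshitz_tf}. Proposition~\ref{prop_smooth} internally perturbs by a multiple of $P_\w$ and invokes Corollary~\ref{cor_RPDG1}; its iteration estimate \eqnok{non_strong_com1} then reads ${\cal O}\{(m + \sqrt{m \tilde L \Omega_X^2/(\epsilon/2)})\,\log(m \tilde L_f \Omega_X/\epsilon)\}$. Substituting $\tilde L = \|A\|^2/\delta = 2\|A\|^2 \Omega_Y^2/\epsilon$ collapses the leading square-root term to
\[
\sqrt{\tfrac{m \tilde L \Omega_X^2}{\epsilon/2}} = \sqrt{\tfrac{4 m \|A\|^2 \Omega_X^2 \Omega_Y^2}{\epsilon^2}} = \tfrac{2 \sqrt{m}\,\|A\| \Omega_X \Omega_Y}{\epsilon},
\]
which dominates the additive $m$ for small $\epsilon$ and matches the claimed rate; bounding $\tilde L_f \le \tilde L$ inside the logarithm yields $\log(m \|A\| \Omega_X \Omega_Y/\epsilon)$ after absorbing constants and the harmless $\epsilon^{-2}$ into the order estimate. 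The high-probability bound follows by carrying the extra $\log(1/\lambda)$ factor through the identical substitution, exactly as \eqnok{non_strong_com2} fed into \eqnok{smoothing_bnd2} in the preceding proposition.

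The one genuine subtlety---and the step I would check most carefully---is the bookkeeping of the two nested error budgets and the two nested parameters. The outer smoothing parameter $\delta$ pins the Lipschitz constant $\tilde L \propto 1/\epsilon$ of the problem handed to Proposition~\ref{prop_smooth}, while that proposition's \emph{own} perturbation (with modulus of order $\epsilon/\Omega_X^2$ by \eqnok{def_delta}) supplies the strong convexity; the resulting effective condition number is $\tilde L/\mu = {\cal O}(\|A\|^2 \Omega_X^2 \Omega_Y^2/\epsilon^2)$, and it is precisely its square root, delivered by the accelerated $\sqrt{{\cal C}}$ dependence in Corollary~\ref{cor_RPDG1}, that converts what would naively be a $1/\epsilon^2$ rate into the stated $1/\epsilon$ rate. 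I would verify that the bounded-domain hypothesis \eqnok{def_omega} underlying $\Omega_X$ is in force here so that Proposition~\ref{prop_smooth} legitimately applies, and confirm that the logarithmic factor, although nominally containing $\epsilon^{-2}$, remains ${\cal O}(\log(m \|A\| \Omega_X \Omega_Y/\epsilon))$ since $\log(c/\epsilon^2) = {\cal O}(\log(1/\epsilon))$.
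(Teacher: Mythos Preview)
Your proposal is correct and follows exactly the paper's approach: the paper's proof simply states that the result follows from \eqnok{app_nontemp} together with an application of Proposition~\ref{prop_smooth} to the smooth approximation problem \eqnok{app_cp}, which is precisely the composition you describe and compute in detail. Your explicit substitution of $\tilde L = 2\|A\|^2\Omega_Y^2/\epsilon$ into the bound \eqnok{non_strong_com1} and the bookkeeping of the nested $\epsilon/2$ budgets are the details the paper leaves implicit.
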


\begin{proof}
Similarly to the arguments used in the proof of Proposition~\ref{prop_nonsmooth1},
our results follow from \eqnok{app_nontemp}, and an application of 
Proposition~\ref{prop_smooth} to problem~\eqnok{app_cp}.
\end{proof}

\vgap

By Propositions~\ref{prop_nonsmooth1} and \ref{prop_nonsmooth2}, the total number of 
gradient computations for $\tilde f(\cdot, \delta)$ performed by the RPDG method,
after disregarding the logarithmic factors, can be ${\cal O}(\sqrt{m})$ times smaller than
those required by deterministic first-order methods, such as Nesterov's smoothing technique~\cite{Nest05-1}.

\subsection{Unconstrained smooth problems} \label{sec_unconstrained_nonsmooth}

In this subsection, we set $X = \bbr^n$, $h(x) = 0$, and $\mu = 0$ in \eqnok{cp} and
consider the basic convex programming problem of 
\beq \label{basic_cp}
f^* := \min_{x \in \bbr^n} \left\{ f(x) := \tsum_{i=1}^m f_i(x)\right\}.
\eeq
We assume that the set of optimal solutions $X^*$ of this problem is nonempty.

We will still use the perturbation-based approach as described in Subsection~\ref{sec_smooth_bnd} by
solving the perturbation problem given by
\beq \label{basic_cp_pert}
f^*_\delta := \min_{x \in \bbr^n} \left\{f_\delta(x) := f(x) + \tfrac{\delta}{2} \|x -x^0\|_2^2, \right\}
\eeq
for some $x^0\in X, \delta > 0$, where $\|\cdot\|_2$ denotes the Euclidean norm.
Also let $L_\delta$ denote the Lipschitz constant for $f_\delta(x)$. Clearly, $L_\delta=L+\delta$.
Since the problem is unconstrained and the information on the size of the optimal solution is unavailable,
it is hard to estimate the total number of iterations by using the absolute accuracy in terms of
$\bbe[f(\bar x) - f^*]$. Instead, we define the relative accuracy associated with a given 
$\bar x \in X$ by
\beq \label{def_rel_acc}
\rec(\bar x, x^0, f^*) := \tfrac{2 [f(\bar x) - f^*]}{L (1+\min_{u \in X^*} \|x^0 - u\|_2^2)}.
\eeq

We are now ready to establish the complexity of the RPDG method applied to \eqnok{basic_cp}
in terms of $\rec(\bar x, x^0, f^*)$. 
\begin{proposition} \label{prop_basic_cp}
Let us apply the RPDG method with the parameter settings in Corollary~\ref{cor_RPDG1}
to the perturbation problem \eqnok{basic_cp_pert} with 
\beq \label{delta_basic_cp}
\delta = \tfrac{L \epsilon}{2},
\eeq
for some $\epsilon > 0$. Then we can find a solution $\bar x \in X$ s.t.
$\bbe[\rec(\bar x, x^0, f^*)] \le \epsilon$ in at most
\beq \label{basic_cp_bnd1}
{\cal O}\left\{ \sqrt{\tfrac{m}{\epsilon}} \log \tfrac{m}{\epsilon}\right\}
\eeq
iterations. Moreover, we can find a solution $\bar x \in X$ 
s.t. $\Prob\{\rec(\bar x, x^0, f^*)> \epsilon\} \le \lambda$ for any $\lambda \in (0,1)$ in at most
\beq \label{basic_cp_bnd2}
{\cal O} \left\{ \sqrt{\tfrac{m}{\epsilon}} \log \tfrac{m}{\lambda \epsilon}\right\}
\eeq
iterations.
\end{proposition}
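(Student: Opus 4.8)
The plan is to treat the perturbation problem \eqnok{basic_cp_pert} as a particular instance of \eqnok{cp} and then invoke the strongly convex complexity estimates already established in Corollary~\ref{cor_RPDG1}, while tracking carefully how the unknown distance to the solution set cancels out. Writing $\w(x) = \tfrac12\|x - x^0\|_2^2$, the function $\w$ is strongly convex with modulus $1$ with respect to $\|\cdot\|_2$, so \eqnok{basic_cp_pert} is exactly of the form \eqnok{cp} with $h \equiv 0$ and strong convexity modulus $\mu = \delta$; the smooth aggregate is still $f = \tsum_{i=1}^m f_i$, so its Lipschitz constants $L_f$ and $L$ are unchanged. Since $\w'(x^0) = 0$ here, I would record that $P(x^0, \cdot) = \tfrac12\|\cdot - x^0\|_2^2$, which keeps the bookkeeping clean (no shift of $h$ is needed, unlike in the proof of Proposition~\ref{prop_smooth}).

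The first step is the standard perturbation accounting. Letting $u^* \in X^*$ attain $D^* := \min_{u \in X^*}\|x^0 - u\|_2^2$ and letting $x^*_\delta$ denote the minimizer of $f_\delta$, I would compare $f_\delta^* \le f_\delta(u^*) = f^* + \tfrac{\delta}{2}D^*$ with the pointwise bound $f_\delta \ge f$ to obtain $f(\bar x) - f^* \le [f_\delta(\bar x) - f_\delta^*] + \tfrac{\delta}{2}D^*$. Dividing by $\tfrac{L}{2}(1 + D^*)$ as in the definition \eqnok{def_rel_acc} of $\rec$ and substituting $\delta = \tfrac{L\epsilon}{2}$ from \eqnok{delta_basic_cp}, the perturbation term contributes $\tfrac{\delta D^*}{L(1+D^*)} = \tfrac{\epsilon}{2}\cdot\tfrac{D^*}{1+D^*} \le \tfrac{\epsilon}{2}$ to the relative accuracy, so it remains only to drive $\tfrac{2\,\bbe[f_\delta(\bar x) - f_\delta^*]}{L(1+D^*)}$ below $\tfrac{\epsilon}{2}$.

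The key step that renders the bound independent of the unknown geometry is an a priori bound on the perturbed optimum: comparing $f_\delta(x^*_\delta) \le f_\delta(u^*)$ with $f_\delta(x^*_\delta) \ge f^* + \tfrac{\delta}{2}\|x^*_\delta - x^0\|_2^2$ yields $\|x^*_\delta - x^0\|_2^2 \le D^*$, and hence $P(x^0, x^*_\delta) \le \tfrac12 D^* \le \tfrac12(1 + D^*)$. Feeding this into the primal optimality estimate \eqnok{nonuniform_result1} of Corollary~\ref{cor_RPDG1} (with $\mu = \delta$), the unknown factor $(1 + D^*)$ cancels against the identical factor in the denominator of $\rec$, leaving a requirement of the form $\alpha^{k/2}(1-\alpha)^{-1}[\delta + 2L_f + L_f^2/\delta] \le \tfrac{L\epsilon}{2}$ that no longer involves $D^*$; the same cancellation makes the logarithmic argument in \eqnok{non_uniform_com2} bounded by $O(1/\epsilon)$ independently of $D^*$.

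Finally I would read off the iteration count. With $\mu = \delta = \tfrac{L\epsilon}{2}$ the relevant condition number is $C = \tfrac{8L}{\mu} = \tfrac{16}{\epsilon}$ from \eqnok{def_C}, so the leading prefactor $\sqrt{(m-1)^2 + 4mC}$ in \eqnok{non_uniform_com2} is of order $\sqrt{mC} = O(\sqrt{m/\epsilon})$, and substituting $\delta$ into the logarithmic argument collapses it to $O(\log(m/\epsilon))$ after the cancellation above; this gives the iteration bound \eqnok{basic_cp_bnd1}. The high-probability bound \eqnok{basic_cp_bnd2} then follows verbatim from the Markov-inequality argument in Corollary~\ref{cor_RPDG1}, which contributes the extra $\log(1/\lambda)$ factor. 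I expect the only genuine subtlety to be the simultaneous cancellation of the $(1+D^*)$ factors in both the leading bound and the logarithm — this is precisely why convergence is measured through the relative accuracy \eqnok{def_rel_acc} rather than the absolute gap, since the problem is unconstrained and the diameter of $X^*$ is unknown; everything else is routine bookkeeping on top of Corollary~\ref{cor_RPDG1}.
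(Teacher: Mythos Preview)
Your proposal is correct and follows essentially the same route as the paper: set up \eqnok{basic_cp_pert} as an instance of \eqnok{cp} with $\mu=\delta$, use the perturbation accounting $f(\bar x)-f^*\le [f_\delta(\bar x)-f_\delta^*]+\tfrac{\delta}{2}D^*$, bound the initial Bregman distance to the perturbed optimum in terms of $D^*$, invoke \eqnok{nonuniform_result1}, and observe that the resulting factors of $(1+D^*)$ cancel against the denominator of $\rec$. The only substantive difference is that the paper first proves $\|x_\delta^*-x^*\|_2\le\|x^*-x^0\|_2$ by strong convexity and then reaches $\|x^0-x_\delta^*\|_2^2\le 2\|x^0-x^*\|_2^2$ via the triangle inequality, whereas you directly compare $f_\delta(x_\delta^*)\le f_\delta(u^*)$ with $f_\delta(x_\delta^*)\ge f^*+\tfrac{\delta}{2}\|x_\delta^*-x^0\|_2^2$ to get $\|x_\delta^*-x^0\|_2^2\le D^*$ without the extra factor of $2$; your version is slightly cleaner but the asymptotics are of course identical.
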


\begin{proof}
Let $x^*_\delta$ be the optimal solution of \eqnok{basic_cp_pert}.
Also let $x^*$ be the optimal solution of \eqnok{basic_cp} that is closest to $x^0$, i.e.,
$x^* = \argmin_{u \in X^*}\|x^0 - u\|_2$.
It then follows from the strong convexity of $f_\delta$ that
\begin{align}
\tfrac{\delta}{2} \|x_\delta^* - x^*\|_2^2 
&\le f_\delta(x^*) - f_\delta(x_\delta^*) \nn\\
&= f(x^*) + \tfrac{\delta}{2} \|x^* - x^0\|_2^2 - f_\delta(x^*_\delta) \nn\\
&\le \tfrac{\delta}{2} \|x^* - x^0\|_2^2,\nn
\end{align}
which implies that
\beq \label{closeness_basic_cp}
\|x_\delta^* - x^*\|_2 \le \|x^* - x^0\|_2.
\eeq
Moreover, using the definition of $f_\delta$ and the fact that
$x^*$ is feasible to \eqnok{basic_cp_pert}, we have
\[
f^* \le f^*_\delta \le f^* +\tfrac{\delta}{2} \|x^* - x^0\|_2^2,
\]
which implies that
\begin{align*}
f(\bar x^K) - f^* 
&\le f_\delta(\bar x^K) - f_\delta^* + f_\delta^* - f^* \nn\\
&\le f_\delta(\bar x^K) - f_\delta^* + \tfrac{\delta}{2} \|x^* - x^0\|_2^2.
%&\le \tfrac{L_\delta}{2} \|x^K - x^*_\delta\|^2 + \tfrac{\delta}{2} \|x^* - x^0\|^2.
\end{align*}
Now suppose that we run the RPDG method applied to \eqnok{basic_cp_pert} for $K$ iterations. Then
by Corollary~\ref{cor_RPDG1}, we have
\begin{align*}
\bbe[f_\delta(\bar x^K)-f_\delta^*] 
&\le \alpha^{K/2} (1-\alpha)^{-1}\left(\delta + 2L_\delta + \tfrac{L_\delta^2}{\delta} \right) \|x^0- x^*_\delta\|_2^2 \nn\\
&\le \alpha^{K/2} (1-\alpha)^{-1}\left(\delta + 2L_\delta + \tfrac{L_\delta^2}{\delta} \right) [\|x^0 - x^*\|_2^2 + \|x^* - x^*_\delta\|_2^2] \nn\\
&= 2 \alpha^{K/2} (1-\alpha)^{-1}\left( 3\delta + 2L + \tfrac{(L+\delta)^2}{\delta} \right) \|x^0 - x^*\|_2^2,
\end{align*}
where the last inequality follows from \eqnok{closeness_basic_cp} and $\alpha$ is defined in \eqnok{nonuniform_stepsize}
with $C= 8 L_\delta / \delta = \tfrac{8(L+\delta)}{\delta}=8(2 /\epsilon + 1)$.
Combining the above two relations, we have
\[
\bbe[f(\bar x^K) - f^*] \le \left[ 2 \alpha^{K/2} (1-\alpha)^{-1} \left( 3\delta+ 2L + \tfrac{(L+\delta)^2}{\delta} \right) + \tfrac{\delta}{2}\right] [\|x^0 - x^*\|_2^2.
\]
Dividing both sides of the above inequality by $L (1+ \|x^0 - x^*\|_2^2) /2$, we obtain
\begin{align*}
\bbe[\rec(\bar x^K, x^0, f^*)] 
&\le \tfrac{2}{L} \left[ 2 \alpha^{K/2} (1-\alpha)^{-1} \left( 3\delta+ 2L + \tfrac{(L+\delta)^2}{\delta} \right) + \tfrac{\delta}{2}\right] \nn\\
& \le 4\left(m + 2\sqrt{2m(\tfrac{2}{\epsilon}+1)}\right)\left( 3\epsilon+ 4+ (2+\epsilon)(\tfrac{2}{\epsilon}+1)\right) \alpha^{K/2} +\tfrac{\epsilon}{2},
\end{align*}
which clearly implies the bound in \eqnok{basic_cp_bnd1}. The bound in \eqnok{basic_cp_bnd2} also
follows from the above inequality and the Markov's inequality.
\end{proof}

\vgap

By Proposition~\ref{prop_basic_cp}, the total number of gradient evaluations for the component functions
$f_i$ required by the RPDG method can be ${\cal O}(\sqrt{m} \log^{-1}(m/\epsilon))$ times smaller than
those performed by deterministic optimal first-order methods.

\setcounter{equation}{0}
\section{Complexity analysis} \label{sec_analysis}
Our main goal in this section is to prove the main theorems in Sections~\ref{sec_det} and \ref{sec_ORIG}.
After introducing some basic tools and general results about PDG and RPDG methods 
in Subsection~\ref{sec_tools} and \ref{sec_gresults}, respectively, we provide the proofs for 
Theorem~\ref{cor_det1} and Theorem~\ref{theorem_RPDG_s}, which describe the main convergence properties 
for the PDG and RPDG methods, in Subsection~\ref{sec_conver}. 
Moreover, in Subsection~\ref{sec_lower_bnd}, we provide the proof for the lower complexity bound in Theorem~\ref{the_lower_bound}.

\subsection{Some basic tools}\label{sec_tools}
%\section{Basic tools: prox-functions and primal-dual gap} 
The following result provides a few different bounds on the diameter of
the dual feasible sets ${\cal G}$ and ${\cal Y}$ in \eqnok{cpsaddle_f} and \eqnok{cpsaddle}.

\begin{lemma}
Let $x^0 \in X$ be given, $y^0_i = \nabla f_i(x^0)$, $i = 1, \ldots, m$, and $g^0 = \nabla f(x^0)$. 
Assume that $J_i'(y^0) = x^0$ and $J_f'(g^0) = x^0$ in the definition of $D(y^0, y)$
and $D_f(g^0, g)$ in \eqnok{def_dual_prox} and \eqnok{def_Df}, respectively.
\begin{itemize}
\item [a)] For any  $x \in X$ and $y_i = \nabla f_i(x)$, $i = 1, \ldots, m$, we have
\beq \label{bound_Dy1}
D(y^0, y) \le \frac{L_f}{2} \|x^0 - x\|^2 \le L_f P(x^0, x).
\eeq
\item [b)] If $x^* \in X$ is an optimal solution of \eqnok{cp} and $y^*_i = \nabla f_i(x^*)$, $i = 1, \ldots, m$,
then 
\beq \label{bound_Dy}
D(y^0, y^*) \le \Psi(x^0) - \Psi(x^*).
\eeq
\item [c)] For any $x \in X$ and $g = \nabla f(x)$, we have
\beq \label{bound_Dy0}
D_f(g^0,g) \le \frac{L_f}{2} \|x^0- x\|^2.
\eeq
\end{itemize}
\end{lemma}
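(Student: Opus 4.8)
The plan is to reduce all three bounds to a single algebraic identity that rewrites the dual Bregman distance as a primal Bregman-type quantity of $f$, and then to invoke smoothness (for a) and c)) or optimality (for b)).

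First I would record the Fenchel--Young consequence of the prescribed subgradient selections. Since $y_i^0 = \nabla f_i(x^0)$ with $J_i'(y_i^0) = x^0$, and since $y_i = \nabla f_i(x)$ forces equality in Fenchel--Young, we have $J_i(y_i) = \langle x, y_i\rangle - f_i(x)$ and $J_i(y_i^0) = \langle x^0, y_i^0\rangle - f_i(x^0)$. Substituting these into the definition \eqnok{def_dual_prox} of $D_i(y_i^0, y_i)$ and cancelling the inner-product terms, I expect to obtain
\[
D_i(y_i^0, y_i) = f_i(x^0) - f_i(x) - \langle \nabla f_i(x), x^0 - x\rangle .
\]
Summing over $i$ and using $\nabla f = \tsum_i \nabla f_i$ then yields the master identity
\[
D(y^0, y) = f(x^0) - f(x) - \langle \nabla f(x), x^0 - x\rangle,
\]
and the identical computation applied to the single conjugate $J_f$ gives $D_f(g^0, g)$ equal to the same right-hand side.

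For parts a) and c), I would bound this right-hand side by the descent-lemma estimate for the $L_f$-smooth function $f$, namely $f(x^0) - f(x) - \langle \nabla f(x), x^0 - x\rangle \le \tfrac{L_f}{2}\|x^0 - x\|^2$; the second inequality in \eqnok{bound_Dy1} then follows from the strong-convexity bound \eqnok{strong_h}, i.e. $\|x - x^0\|^2 \le 2 P(x^0, x)$. I want to stress the one point that is easy to get wrong: the identity must be summed over $i$ \emph{before} smoothness is applied, so that the aggregate constant $L_f$ (rather than $L = \tsum_i L_i$) appears; applying $L_i$-smoothness term-by-term would only deliver the weaker factor $\tfrac{L}{2}$.

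For part b), the master identity specializes to $D(y^0, y^*) = f(x^0) - f(x^*) - \langle \nabla f(x^*), x^0 - x^*\rangle$, i.e. the gap between $f(x^0)$ and the linearization $\ell(x) := f(x^*) + \langle \nabla f(x^*), x - x^*\rangle$ of $f$ at $x^*$, evaluated at $x^0$. The key observation is that $x^*$ also minimizes the \emph{linearized} objective $\ell(x) + h(x) + \mu\,\w(x)$ over $X$, because this function shares the first-order optimality condition of $\Psi$ at $x^*$ (same smooth-part gradient $\nabla f(x^*)$, same $\partial h$ and $\partial \w$). Hence $\ell(x^0) + h(x^0) + \mu\,\w(x^0) \ge \ell(x^*) + h(x^*) + \mu\,\w(x^*) = \Psi(x^*)$, and rearranging gives $f(x^0) - \ell(x^0) \le \Psi(x^0) - \Psi(x^*)$, which is exactly \eqnok{bound_Dy}. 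Equivalently, one may expand the optimality inequality $\langle \nabla f(x^*) + h'(x^*) + \mu\,\w'(x^*), x^0 - x^*\rangle \ge 0$ and bound the two subgradient terms from above using convexity of $h$ and $\w$.

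The main obstacle is really just the bookkeeping in the first step: keeping straight the paper's Bregman convention (base point in the first argument) together with the selection rule $J_i'(y_i^0) = x^0$, so that the telescoping of inner products in $D_i$ lands on the gradient $\nabla f_i(x)$ evaluated at the \emph{second} point $x$. Once the master identity is secured, parts a) and c) are immediate from smoothness and part b) from optimality.
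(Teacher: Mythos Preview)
Your proposal is correct and follows essentially the same route as the paper: both derive the identity $D(y^0,y)=f(x^0)-f(x)-\langle \nabla f(x),x^0-x\rangle$ from Fenchel--Young and the prescribed subgradient selections, then invoke $L_f$-smoothness plus \eqnok{strong_h} for a) and c), and the first-order optimality inequality together with convexity of $h$ and $\w$ for b). Your remark about summing over $i$ \emph{before} applying smoothness (so that $L_f$ rather than $L$ appears) is exactly the point, and your two phrasings of the part b) argument are equivalent to the paper's.
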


\begin{proof}
We first show part a). It follows from the definition of $J_i$, \eqnok{def_dual_prox}, and \eqnok{def_D} that
\begin{align}
D(y^0, y) 
&= J(y) - J(y^0) - \tsum_{i=1}^m\langle J_i'(y^0), y_i-y_i^0 \rangle \nn \\
&= \langle x, U y \rangle - f(x) + f(x^0) - \langle x^0, U y^0 \rangle
- \langle x^0, U(y - y^0) \rangle \nn\\
&= f(x^0) - f(x) - \langle U y, x^0 - x\rangle \nn\\
&\le \frac{L_f}{2} \|x^0 - x\|^2 \le L_f P(x^0, x), \nn
\end{align}
where the last inequality follows from \eqnok{strong_h}.
We now show part b).  
By the above relation, the convexity of $h$ and $\w$, and the optimality of $(x^*,y^*)$, we
have
\begin{align}
D(y^0, y^*) &= f(x^0) - f(x^*) - \langle U y^*, x^0 - x^*\rangle \nn\\
&=f(x^0) - f(x^*) + \langle h'(x^*)+\mu \w'(x^*), x^0-x^*\rangle - \langle U y^* + h'(x^*) + \mu \w'(x^*), x^0 - x^* \rangle \nn \\ 
&\le f(x^0) - f(x^*) + \langle h'(x^*) + \mu \w'(x^*), x^0-x^*\rangle \le \Psi(x^0) - \Psi(x^*).\nn
\end{align}
The proof of part c) is similar to part a) and hence the details are skipped.
\end{proof}

\vgap

The following lemma gives an important bound for the primal optimality gap $\Psi(\bar{x})-\Psi(x^*)$
for some $\bar x \in X$.
\begin{lemma}
Let $(\bar{x},\bar{y})\in Z$ be a given pair of feasible solutions of \eqnok{cpsaddle}, 
and $z^*=(x^*,y^*)$ be a pair of optimal solutions of \eqnok{cpsaddle}. Then, we have
\beq\label{psi_Q}
\Psi(\bar{x})-\Psi(x^*) \le Q((\bar{x},\bar{y}),z^*)+ \frac{L_f}{2}\|\bar{x}-x^*\|^2.
\eeq
\end{lemma}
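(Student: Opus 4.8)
The plan is to compute the difference $\Psi(\bar x)-\Psi(x^*)-Q((\bar x,\bar y),z^*)$ directly from the definitions and show it is bounded above by $\tfrac{L_f}{2}\|\bar x-x^*\|^2$. First I would expand $\Psi(\bar x)-\Psi(x^*)=f(\bar x)-f(x^*)+h(\bar x)-h(x^*)+\mu\w(\bar x)-\mu\w(x^*)$ and subtract the definition of $Q$ in \eqnok{def_gap} evaluated at $z=z^*=(x^*,y^*)$. The $h$ and $\mu\w$ terms appear identically in both quantities and cancel exactly, leaving the purely ``smooth-part'' residual $\Psi(\bar x)-\Psi(x^*)-Q = f(\bar x)-f(x^*)-\langle \bar x, Uy^*\rangle + J(y^*) + \langle x^*, U\bar y\rangle - J(\bar y)$.

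Next I would split this residual into two groups. The group $-f(x^*)+\langle x^*, U\bar y\rangle - J(\bar y)$ is nonpositive: since $f(x^*)=\max_{y\in\mathcal Y}\langle x^*, Uy\rangle - J(y)$ by the conjugate reformulation in \eqnok{cpsaddle}, the Fenchel--Young inequality gives $\langle x^*, U\bar y\rangle - J(\bar y)\le f(x^*)$ for the arbitrary feasible $\bar y$, so this group is $\le 0$.

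The remaining group $f(\bar x)-\langle \bar x, Uy^*\rangle + J(y^*)$ is where the optimality of $z^*$ enters. Because $z^*=(x^*,y^*)$ is a saddle point, the inner maximum in \eqnok{cpsaddle} is attained at $y^*$; hence $y^*_i=\nabla f_i(x^*)$, so that $Uy^*=\tsum_{i=1}^m y^*_i=\nabla f(x^*)$, and moreover the conjugacy identity $J(y^*)=\langle x^*, Uy^*\rangle - f(x^*)$ holds. Substituting this identity collapses the group to the Bregman-type quantity $f(\bar x)-f(x^*)-\langle \nabla f(x^*),\bar x - x^*\rangle$.

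Finally I would invoke the Lipschitz smoothness of $f$ from \eqnok{smooth_f}: the standard descent-lemma upper bound yields $f(\bar x)-f(x^*)-\langle \nabla f(x^*),\bar x-x^*\rangle \le \tfrac{L_f}{2}\|\bar x-x^*\|^2$. Combining this with the nonpositivity of the first group gives the claimed inequality. The step that requires the most care, and which is really the conceptual heart, is the identification $Uy^*=\nabla f(x^*)$ together with $J(y^*)=\langle x^*, Uy^*\rangle-f(x^*)$: this is what converts the gap between the primal optimality gap and $Q$ into a quantity controlled solely by the smoothness of $f$, with no appeal to strong convexity of $\w$.
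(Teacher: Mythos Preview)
Your proof is correct and follows essentially the same approach as the paper: both use Fenchel--Young to handle the $\bar y$ terms, invoke the optimality of $y^*$ (i.e., $Uy^*=\nabla f(x^*)$ and $J(y^*)=\langle x^*,Uy^*\rangle-f(x^*)$), and finish with the descent lemma for $f$. The only cosmetic difference is that the paper introduces the auxiliary point $\bar y_*=(\nabla f_1(\bar x);\ldots;\nabla f_m(\bar x))$, which cancels out anyway; your direct computation of $\Psi(\bar x)-\Psi(x^*)-Q$ avoids this detour and is slightly cleaner.
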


\begin{proof}
Let $\bar{y}_*=(\nabla f_1(\bar{x});\nabla f_2(\bar{x});\dots;\nabla f_m(\bar{x}))$, 
and by the definition of $Q(\cdot,\cdot)$ in \eqnok{def_gap}, we have
\begin{align*}
Q((\bar{x},\bar{y}),z^*)
&=\left[h(\bar x) + \mu \w(\bar x) + \langle \bar x, U y^* \rangle -  J(y^*)\right]
- \left[\cX(x^*) + \mu \w(x^*) + \langle x^*, U \bar y \rangle - J(\bar y) \right]\\
&\ge \left[h(\bar{x})+\mu\w(\bar{x})+\langle \bar{x}, U\bar{y}_*\rangle - J(\bar{y}_*)\right]
+\langle \bar{x}, U(y^*-\bar{y}_*)\rangle-J(y^*)+J(\bar{y}_*)\\
&\quad -\left[\cX(x^*)+\mu \w(x^*)+\max_{y\in \mathcal{Y}}\left\{\langle x^*, Uy\rangle-J(y)\right\}\right]\\
&=\Psi(\bar x)-\Psi(x^*)+\langle \bar{x}, U(y^*-\bar{y}_*)\rangle -\langle x^*,Uy^*\rangle + f(x^*)+\langle \bar{x}, U \bar{y}_*\rangle -f(\bar{x})\\
&=\Psi(\bar x)-\Psi(x^*)+f(x^*)-f(\bar{x})+\langle \bar{x}-x^*,\nabla f(x^*)\rangle
\ge \Psi(\bar x)-\Psi(x^*)-\frac{L_f}{2}\|\bar x-x^*\|^2,
\end{align*}
where the second equality follows from the fact that $J_i,i=1,\dots,m$, are the conjugate functions of $f_i$.  
\end{proof}

\subsection{General results for both PDG and RPDG}\label{sec_gresults}
We will establish some general convergence results in Proposition~\ref{main_theorm}
which holds 
for both deterministic and randomized PDG methods by viewing
PDG as a special case of RPDG with $m=1$. Then both Theorems~\ref{cor_det1}
and \ref{theorem_RPDG_s} follow as some immediate consequences of Proposition~\ref{main_theorm}.

Before showing Proposition~\ref{main_theorm}
we will develop a few technical results. Lemma~\ref{tech1_prox} below characterizes
the solutions of the prox-mapping in \eqnok{prox_mapping} and \eqnok{d_prox_mapping}.
This result generalizes some previous results (e.g., Lemma 6 of \cite{LaLuMo11-1} and Lemma 2 of \cite{GhaLan12-2a}). 
%Note that in this result, $w$ is not necessarily strongly convex or differentiable.

\begin{lemma} \label{tech1_prox}
Let $U$ be a closed convex set and a point $\tilde u \in U$ be given. 
Also let $w:U \to \bbr$ be a convex function and
\beq \label{def_bregman_w}
W(\tilde u, u) = w(u) - w(\tilde u) - \langle w'(\tilde u), u - \tilde u \rangle,
\eeq
for some $w'(\tilde u) \in \partial w(\tilde u)$.
Assume that the function $q: U \to \bbr$ satisfies
\beq \label{strong_q}
q(u_1) - q(u_2) - \langle q'(u_2), u_1 - u_2 \rangle \ge \mu_0 W(u_2, u_1), \ \ \forall u_1, u_2 \in U
\eeq
for some $\mu_0 \ge 0$.
Also assume that the scalars $\mu_1$ and $\mu_2$ 
are chosen such that $\mu_0 + \mu_1 + \mu_2 \ge 0$.
If
\beq \label{tech_lemma_prob}
u^* \in \Argmin \{ q(u) + \mu_1 w(u) + \mu_2 W(\tilde u, u) : u \in U\},
\eeq
then for any $ u \in U$, we have
\[
q(u^*) + \mu_1 w(u^*) + \mu_2 W(\tilde u, u^*) + (\mu_0 + \mu_1 + \mu_2) W(u^*,u) 
 \le q(u) + \mu_1 w(u) + \mu_2 W(\tilde u, u).
\]
\end{lemma}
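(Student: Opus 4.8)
The plan is to reduce everything to the first-order optimality condition for $u^*$ together with a few elementary Bregman identities. First I would observe that the minimized objective $\phi(u) := q(u) + \mu_1 w(u) + \mu_2 W(\tilde u, u)$ is in fact convex on $U$. Indeed, the relative strong convexity assumption \eqnok{strong_q} implies that $q - \mu_0 w$ is convex, while $W(\tilde u, \cdot)$ differs from $w$ only by an affine term; hence $\phi = (q - \mu_0 w) + (\mu_0 + \mu_1 + \mu_2)\, w + (\text{affine})$, which is convex precisely because $\mu_0 + \mu_1 + \mu_2 \ge 0$. This is exactly where the hypothesis $\mu_0 + \mu_1 + \mu_2 \ge 0$ enters, and it legitimizes writing a genuine first-order optimality condition for the global minimizer $u^*$.

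Next I would record that optimality condition. Since a subgradient of $u \mapsto W(\tilde u, u)$ is $w'(u) - w'(\tilde u)$, there exist $q'(u^*) \in \partial q(u^*)$ and $w'(u^*) \in \partial w(u^*)$ such that
\[
\langle q'(u^*) + (\mu_1 + \mu_2)\, w'(u^*) - \mu_2\, w'(\tilde u),\; u - u^* \rangle \ge 0, \qquad \forall u \in U.
\]
The delicate bookkeeping point here, which I expect to require the most care, is that the subgradient $w'(u^*)$ appearing in this inequality must be exactly the one used to build $W(u^*, u)$ in the conclusion. This is consistent with the recursive subgradient-selection convention adopted for the prox-mappings earlier in the paper, so I would state explicitly that $W(u^*, \cdot)$ is defined using this particular $w'(u^*)$.

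Then I would expand the gap $\phi(u) - \phi(u^*)$ term by term, using three facts: (i) the relative strong convexity, $q(u) - q(u^*) \ge \langle q'(u^*), u - u^*\rangle + \mu_0 W(u^*, u)$; (ii) the defining identity $w(u) - w(u^*) = W(u^*, u) + \langle w'(u^*), u - u^*\rangle$; and (iii) the three-point identity $W(\tilde u, u) - W(\tilde u, u^*) = W(u^*, u) + \langle w'(u^*) - w'(\tilde u), u - u^*\rangle$, which follows by writing out both Bregman distances and cancelling the common $w(\tilde u)$ and $w'(\tilde u)$ contributions. Collecting the inner-product terms reproduces exactly the functional appearing in the optimality condition, and is therefore nonnegative, while the Bregman terms add up to $(\mu_0 + \mu_1 + \mu_2) W(u^*, u)$. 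This yields $\phi(u) - \phi(u^*) \ge (\mu_0 + \mu_1 + \mu_2) W(u^*, u)$, and rearranging gives the assertion. The only genuine obstacle is the consistent handling of the non-differentiable subgradient selections; the remaining algebra is a routine regrouping of the linear and Bregman terms.
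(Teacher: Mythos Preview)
Your proposal is correct and follows essentially the same approach as the paper: both establish the Bregman-type lower bound $\phi(u) - \phi(u^*) - \langle \phi'(u^*), u - u^* \rangle \ge (\mu_0 + \mu_1 + \mu_2) W(u^*, u)$ via the three-point identity for $W$ and the defining identity for $w$, then combine it with the first-order optimality condition $\langle \phi'(u^*), u - u^* \rangle \ge 0$. The only organizational difference is that the paper proves this lower bound for arbitrary $u_1, u_2$ (thereby obtaining convexity and the final inequality from a single computation), whereas you first argue convexity separately via the decomposition $\phi = (q-\mu_0 w) + (\mu_0+\mu_1+\mu_2)w + (\text{affine})$ and then redo the expansion at $u^*$; your careful remark about the subgradient selection for $w'(u^*)$ is exactly what the paper's convention enforces.
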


\begin{proof}
Let $\phi(u) := q(u) + \mu_1 w(u) + \mu_2 W(\tilde u, u)$.
It can be easily checked that for any $u_1, u_2 \in U$, 
\begin{align*}
W(\tilde u, u_1) &= W(\tilde u, u_2) + \langle W'(\tilde u, u_2), u_1-u_2 \rangle + W(u_2, u_1), \\
w(u_1) &= w(u_2) + \langle w'(u_2), u_1 - u_2 \rangle + W(u_2,u_1).
\end{align*}
Using these relations and \eqnok{strong_q}, we conclude that
\beq \label{tech_tt}
\phi(u_1) - \phi(u_2) - \langle \phi'(u_2), u_1 - u_2 \rangle \ge (\mu_0 + \mu_1 + \mu_2) W(u_2, u_1)
\eeq
for any $u_1, u_2 \in Y$, which together with the fact that $\mu_0+\mu_1 + \mu_2 \ge 0$ then imply
that $\phi$ is convex. Since $u^*$ is an optimal solution of \eqnok{tech_lemma_prob},
we have $\langle \phi'(u^*), u - u^* \rangle \ge 0$. Combining this inequality with \eqnok{tech_tt},
we conclude that 
\[
\phi(u) - \phi(u^*) \ge (\mu_0 + \mu_1 + \mu_2) W(u^*, u), 
\]
from which the result immediately follows.
\end{proof}

\vgap

The following simple result provides a few identities related to $y^t$ and $\tilde y^t$ that 
will be useful for the analysis of the PDG algorithm.
\begin{lemma}
Let $y^t$, $\tilde y^t$, and $\hat y^t$ be defined in \eqnok{def_yt_r}, \eqnok{def_tyt_r}, 
and \eqnok{def_hyt_r}, respectively. Then we have, for any $i = 1, \ldots, m$ and $t = 1, \ldots, k$,
\begin{align}
\bbe_t[D_i(y^{t-1}_i, y_i^t)] &= p_i D_i(y^{t-1}_i, \hat y_i^t), \label{exp_Dyt}\\
\bbe_t[D_i(y_i^t, y_i)] &= p_i D_i(\hat y^t_i, y_i) + (1-p_i) D_i(y^{t-1}_i, y_i), \label{exp_Dyy}
\end{align}
for any $y \in {\cal Y}$,
where $\bbe_t$ denotes the conditional expectation w.r.t. $i_t$ given $i_1, \ldots, i_{t-1}$.
\end{lemma}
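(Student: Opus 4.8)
The plan is to establish both identities by a single direct computation that conditions on the random index $i_t$ and exploits the fact that, for each fixed coordinate $i$, the updated dual variable $y^t_i$ takes only two possible values. The key facts I would assemble at the outset are: (i) the explicit two-valued characterization \eqnok{def_tyt_alt_r1}, namely $y^t_i = \hat y^t_i$ on the event $\{i_t = i\}$ and $y^t_i = y^{t-1}_i$ on the complementary event $\{i_t \neq i\}$; (ii) the only source of randomness in $\bbe_t$ is the draw of $i_t$ with $\prob\{i_t = i\} = p_i$; and (iii) any Bregman-type divergence of the form \eqnok{def_dual_prox} vanishes on the diagonal, i.e. $D_i(u,u)=0$ for all $u \in {\cal Y}_i$, which follows immediately from the definition by setting the two arguments equal.

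For the first identity \eqnok{exp_Dyt}, I would split the expectation according to whether $i_t = i$. On $\{i_t = i\}$, substituting $y^t_i = \hat y^t_i$ gives $D_i(y^{t-1}_i, y^t_i) = D_i(y^{t-1}_i, \hat y^t_i)$, whereas on $\{i_t \neq i\}$ the substitution $y^t_i = y^{t-1}_i$ together with fact (iii) gives $D_i(y^{t-1}_i, y^t_i) = D_i(y^{t-1}_i, y^{t-1}_i) = 0$. Taking the conditional expectation over the distribution of $i_t$ then yields
\[
\bbe_t[D_i(y^{t-1}_i, y^t_i)] = p_i\, D_i(y^{t-1}_i, \hat y^t_i) + (1-p_i)\cdot 0 = p_i\, D_i(y^{t-1}_i, \hat y^t_i).
\]
For the second identity \eqnok{exp_Dyy}, the same case analysis applies, but now the second argument of $D_i$ is the fixed point $y_i$, so neither branch vanishes: on $\{i_t = i\}$ one gets $D_i(y^t_i, y_i) = D_i(\hat y^t_i, y_i)$, and on $\{i_t \neq i\}$ one gets $D_i(y^t_i, y_i) = D_i(y^{t-1}_i, y_i)$, and averaging over $i_t$ produces the claimed convex combination with weights $p_i$ and $1-p_i$.

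Since everything reduces to the two-valued description of $y^t_i$ and the diagonal-vanishing of $D_i$, I do not anticipate any genuine obstacle; both identities are essentially bookkeeping. The single point demanding care is to invoke the explicit characterization \eqnok{def_tyt_alt_r1} of $y^t_i$ rather than the definition \eqnok{def_tyt_r} of the auxiliary prediction variable $\tilde y^t_i$, which plays no role in either divergence here. Keeping the three quantities $y^t$, $\hat y^t$, and $\tilde y^t$ cleanly separated — and noting that only the first two enter these identities — is therefore the main thing to get right.
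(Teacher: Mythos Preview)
Your proposal is correct and matches the paper's own proof essentially line for line: the paper also argues directly from $\Prob_t\{y_i^t = \hat y_i^t\} = p_i$ and $\Prob_t\{y_i^t = y_i^{t-1}\} = 1 - p_i$, noting that both identities follow immediately from this two-valued description. Your write-up is in fact slightly more explicit, since you spell out the diagonal-vanishing $D_i(y_i^{t-1}, y_i^{t-1}) = 0$ needed for \eqnok{exp_Dyt}, which the paper leaves implicit.
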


\begin{proof}
%It can be easily seen from \eqnok{def_tyt_alt_r1} that
%%\beq \label{def_tyt_alt_r1}
%%y_i^t = \begin{cases}
%%\hat y_i^t,& i = i_t, \\
%% y_i^{t-1}, & i \neq i_t.
%%\end{cases} 
%%\eeq
%%Hence 
%$\bbe_t[y_i^t] = p_i \hat y_i^t + (1 - p_i) y_i^{t-1}$, $i = 1, \ldots, m$.
%Using this identity in the definition of $\tilde y^t$ in \eqnok{def_tyt_r}, we
%obtain \eqnok{exp_tyt}. 
\eqnok{exp_Dyt} follows immediately from
the facts that $\Prob_t \{y_i^t = \hat y_i^t\} = \Prob_t\{ i_t = i\}
= p_i$ and $\Prob_t \{y_i^t = y_i^{t-1}\} = 1 - p_i$. Here $\Prob_t$ denotes the
conditional probability w.r.t. $i_t$ given $i_1, \ldots, i_{t-1}$.
Similarly, we can show \eqnok{exp_Dyy}.
\end{proof}

\vgap

We now prove an important recursion about the RPDG method.

\begin{lemma}
Let the gap function $Q$ be defined in \eqnok{def_gap}. Also
let $x^t$ and $\hat y^t$ be defined in \eqnok{def_xt_r} and \eqnok{def_hyt_r}, respectively.
Then for any $t \ge 1$, we have
\begin{align}
\bbe[Q((x^t, \hat y^t), z)] & \le \bbe\left[ \eta_t P(x^{t-1}, x) - (\mu + \eta_t) P(x^t, x)
- \eta_t P(x^{t-1}, x^t) \right] \nn\\
& \quad + \tsum_{i=1}^m\bbe \left[ \left(p_i^{-1} (1+\tau_t)-1\right) D_i(y^{t-1}_i, y_i)- p_i^{-1} (1+ \tau_t) D_i(y^{t}_i, y_i)\right] \nn\\
& \quad + \bbe\left[ \langle \tilde x^t - x^t, U(\tilde y^t - y) \rangle - \tau_t p_{i_t}^{-1} D_{i_t}(y_{i_t}^{t-1}, y_{i_t}^t)\right], \ \ \ \forall z \in Z.\label{ORIG_rec}
\end{align}
\end{lemma}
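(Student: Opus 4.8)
The plan is to prove \eqnok{ORIG_rec} by applying the prox-mapping characterization of Lemma~\ref{tech1_prox} twice --- once to the dual update producing $\hat y^t$ in \eqnok{def_hyt_r} and once to the primal update \eqnok{def_xt_r} producing $x^t$ --- expanding the gap function $Q$ from \eqnok{def_gap}, and then reconciling the deterministic $\hat y^t$-quantities with the random $y^t$-quantities by taking conditional expectations.

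First I would handle the dual step. By \eqnok{def_hyt_r} and \eqnok{d_prox_mapping}, $\hat y_i^t$ minimizes $\langle -\tilde x^t, y_i\rangle + J_i(y_i) + \tau_t D_i(y_i^{t-1}, y_i)$. Applying Lemma~\ref{tech1_prox} with $w = J_i$ (so $W = D_i$), $q(y_i) = \langle -\tilde x^t, y_i\rangle$, $\mu_0 = 0$, $\mu_1 = 1$, $\mu_2 = \tau_t$, and base point $y_i^{t-1}$ gives, for every $y_i \in {\cal Y}_i$,
\[
J_i(\hat y_i^t) - J_i(y_i) - \langle \tilde x^t, \hat y_i^t - y_i\rangle \le \tau_t D_i(y_i^{t-1}, y_i) - \tau_t D_i(y_i^{t-1}, \hat y_i^t) - (1+\tau_t) D_i(\hat y_i^t, y_i).
\]
Summing over $i$ and using $U\hat y^t = \tsum_{i=1}^m \hat y_i^t$ bounds $J(\hat y^t) - J(y) - \langle \tilde x^t, U(\hat y^t - y)\rangle$. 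Next, since $x^t = \Pr_X(U\tilde y^t, x^{t-1}, \eta_t)$ with $U\tilde y^t = \tsum_{i=1}^m \tilde y_i^t$, Lemma~\ref{tech1_prox} with $w = \w$ (so $W = P$), $q(x) = \langle U\tilde y^t, x\rangle + h(x)$, $\mu_0 = 0$, $\mu_1 = \mu$, $\mu_2 = \eta_t$, and base point $x^{t-1}$ yields
\[
h(x^t) + \mu\w(x^t) - h(x) - \mu\w(x) + \langle U\tilde y^t, x^t - x\rangle \le \eta_t P(x^{t-1}, x) - (\mu+\eta_t) P(x^t, x) - \eta_t P(x^{t-1}, x^t).
\]

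I would then split $Q((x^t, \hat y^t), z)$ from \eqnok{def_gap} into its primal part $h(x^t)+\mu\w(x^t)-h(x)-\mu\w(x)$, its dual part $J(\hat y^t)-J(y)$, and its bilinear part $\langle x^t, Uy\rangle - \langle x, U\hat y^t\rangle$, and substitute the two displayed bounds. The Bregman contributions assemble directly into the $P$-terms and into $\tau_t$-weighted $D_i(y_i^{t-1},\cdot)$ and $(1+\tau_t)$-weighted $D_i(\hat y_i^t,\cdot)$ terms. The crux is the bilinear bookkeeping: collecting the inner products from both bounds together with $\langle x^t, Uy\rangle - \langle x, U\hat y^t\rangle$ and regrouping by dual argument gives the identity
\[
\langle x-x^t, U\tilde y^t\rangle + \langle x^t, Uy\rangle - \langle x, U\hat y^t\rangle + \langle \tilde x^t, U(\hat y^t - y)\rangle = \langle \tilde x^t - x^t, U(\tilde y^t - y)\rangle + \langle x - \tilde x^t, U(\tilde y^t - \hat y^t)\rangle.
\]
The first term on the right is exactly the bilinear term retained in \eqnok{ORIG_rec}; the second is a cross term that I expect to disappear in conditional expectation.

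Finally I would apply the conditional expectation $\bbe_t$ and then total expectation by the tower property. Because $\tilde x^t$, $\hat y^t$, and $x$ are measurable before $i_t$ is drawn while $\bbe_t[\tilde y^t] = \hat y^t$ by \eqnok{exp_tyt}, the cross term $\langle x - \tilde x^t, U(\tilde y^t - \hat y^t)\rangle$ has zero conditional expectation. For the dual Bregman terms I would invoke \eqnok{exp_Dyt} to rewrite $\tau_t \tsum_{i=1}^m D_i(y_i^{t-1}, \hat y_i^t)$ as $\bbe_t[\tau_t p_{i_t}^{-1} D_{i_t}(y_{i_t}^{t-1}, y_{i_t}^t)]$, and \eqnok{exp_Dyy} to convert $(1+\tau_t) D_i(\hat y_i^t, y_i)$ into $p_i^{-1}(1+\tau_t)\bbe_t[D_i(y_i^t, y_i)]$ minus $p_i^{-1}(1+\tau_t)(1-p_i)D_i(y_i^{t-1}, y_i)$; combining this last piece with the $\tau_t D_i(y_i^{t-1}, y_i)$ term collapses the coefficient on $D_i(y_i^{t-1}, y_i)$ to exactly $(1+\tau_t)p_i^{-1} - 1 = p_i^{-1}(1+\tau_t)-1$, matching \eqnok{ORIG_rec}. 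The main obstacle will be precisely this conversion: tracking which quantities are measurable before $i_t$ is chosen and verifying that the coefficients produced by \eqnok{exp_Dyt}--\eqnok{exp_Dyy} collapse to $p_i^{-1}(1+\tau_t)-1$ and $-p_i^{-1}(1+\tau_t)$, together with the error-prone but routine cancellations in the bilinear regrouping above.
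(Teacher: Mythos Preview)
Your proposal is correct and follows essentially the same approach as the paper: apply Lemma~\ref{tech1_prox} to both the primal update \eqnok{def_xt_r} and the dual update \eqnok{def_hyt_r}, add the resulting inequalities to bound $Q((x^t,\hat y^t),z)$, and then pass to expectation using \eqnok{exp_tyt}, \eqnok{exp_Dyt}, and \eqnok{exp_Dyy} to convert $\hat y^t$-quantities into $y^t$-quantities. The only cosmetic difference is that the paper leaves the bilinear leftovers as the three separate terms $\langle \tilde x^t, U(\hat y^t - y)\rangle - \langle x^t, U(\tilde y^t - y)\rangle + \langle x, U(\tilde y^t - \hat y^t)\rangle$ and kills the unwanted pieces individually via $\bbe[\langle x, U(\tilde y^t-\hat y^t)\rangle]=0$ and $\bbe[\langle \tilde x^t, U\hat y^t\rangle]=\bbe[\langle \tilde x^t, U\tilde y^t\rangle]$, whereas you first regroup them into $\langle \tilde x^t-x^t, U(\tilde y^t-y)\rangle + \langle x-\tilde x^t, U(\tilde y^t-\hat y^t)\rangle$ and then annihilate the second summand in one stroke; the two computations are algebraically identical.
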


\begin{proof}
It follows from Lemma~\ref{tech1_prox} applied to \eqnok{def_xt_r} that $\forall x \in X$,
\beq \label{primal_opt_r}
\langle x^t - x, U \tilde y^t \rangle + h(x^t) + \mu \w(x^t)  - h(x) - \mu \w(x) \le
\eta_t P(x^{t-1}, x) - (\mu + \eta_t) P(x^t, x) - \eta_t P(x^{t-1}, x^t).
\eeq
Moreover, by Lemma~\ref{tech1_prox} applied to \eqnok{def_hyt_r}, we have, for any 
$i=1, \ldots, m$ and $t =1, \ldots, k$,
\[
\langle - \tilde x^t, \hat y^t_i - y_i \rangle + J_i(\hat y_i^t) - J_i(y_i) \le \tau_t D_i(y^{t-1}_i, y_i)
- (1+ \tau_t) D_i(\hat y^{t}_i, y_i) - \tau_t D_i(y^{t-1}_i, \hat y^t_i).
\]
Summing up these inequalities over $i = 1, \ldots, m$, we have, $\forall y \in {\cal Y}$,
\beq \label{dual_opt_r}
\langle - \tilde x^t, U (\hat y^t - y)\rangle + J(\hat y^t) - J(y) \le 
 \tsum_{i=1}^m \left[ \tau_t D_i(y_i^{t-1}, y_i)
- (1+ \tau_t) D_i(\hat y^{t}_i, y_i) - \tau_t D_i(y^{t-1}_i, \hat y^t_i)\right]. 
\eeq
Using the definition of $Q$ in \eqnok{def_gap}, \eqnok{primal_opt_r}, and \eqnok{dual_opt_r},
we have
\begin{align}
Q((x^t, \hat y^t), z) & \le \eta_t P(x^{t-1}, x) - (\mu + \eta_t) P(x^t, x) - \eta_t P(x^{t-1}, x^t) \nn\\
& \quad + \tsum_{i=1}^m \left[ \tau_t D_i(y^{t-1}_i, y_i)
- (1+ \tau_t) D_i(\hat y^{t}_i, y_i) - \tau_t D_i(y^{t-1}_i, \hat y^t_i) \right] \nn\\
& \quad + \langle \tilde x^t, U(\hat y^t - y) \rangle - \langle x^t, U (\tilde y^t - y) \rangle + \langle x, U(\tilde y^t - \hat y^t)\rangle. \label{ORIG_rec1} 
\end{align}
Also observe that by \eqnok{def_yt_r}, \eqnok{exp_tyt}, \eqnok{exp_Dyt},
and \eqnok{exp_Dyy},
\begin{align*}
D_i(y^{t-1}_i, \hat y^t_i) &= 0, \ \forall i \neq i_t,\\
\bbe[ \langle x, U(\tilde y^t - \hat y^t)\rangle] &= 0, \\
\bbe[\langle \tilde x^t, U \hat y^t \rangle] &= \bbe[\langle \tilde x^t, U \tilde y^t \rangle],\\
\bbe[D_i(y^{t-1}_i, \hat y^t_i)] &= \bbe[p_i^{-1} D_i(y^{t-1}_i, y^t_i)]  \\
\bbe[D_i(\hat y^{t}_i, y_i)] &= p_i^{-1} \bbe[D_i(y^{t}_i, y_i)] - (p_i^{-1}-1) \bbe[D_i(y^{t-1}_i, y_i)],
\end{align*}
Taking expectation on both sides of \eqnok{ORIG_rec1} and using the above observations, 
we obtain \eqnok{ORIG_rec}.
\end{proof}

\vgap

We are now ready to establish a general convergence result which holds
for both PDG and RPDG.
\begin{proposition} \label{main_theorm}
Suppose
that $\{\tau_t\}$, $\{\eta_t\}$, and $\{\alpha_t\}$ in the RPDG method satisfy
\begin{align}
\theta_t \left(p_i^{-1} (1+\tau_t)-1\right)  &\le p_i^{-1}  \theta_{t-1} (1+ \tau_{t-1}) , i = 1, \ldots,m; t = 2, \ldots, k, \label{cond_tau}\\
\theta_t \eta_t  &\le  \theta_{t-1} (\mu + \eta_{t-1}) , t = 2, \ldots, k, \label{cond_eta}\\
 \tfrac{\eta_k}{4} &\ge \tfrac{L_i (1- p_i)^2}{\tau_k p_i}, i = 1, \ldots, m,\label{eta_tau_cond1} \\
\tfrac{\eta_{t-1}}{2} &\ge \tfrac{L_i \alpha_t }{\tau_t p_i} 
 + \tfrac{ (1- p_j)^2 L_j }{\tau_{t-1} p_j }, \, i ,j \in \{1, \ldots, m\}; t = 2, \ldots, k, \label{eta_tau_cond2}\\
\tfrac{\eta_k}{2} &\ge \tfrac{\tsum_{i=1}^m (p_i L_i)}{1+\tau_k},\label{eta_gamma_cond1}\\
\alpha_t  \theta_t &= \theta_{t-1}, t = 2, \ldots, k, \label{cond_alpha}
\end{align}
for some $\theta_t \ge 0$, $t =1, \ldots, k$.
Then, for any $k \ge 1$ and any given $z \in Z$, we have
\begin{align} 
\tsum_{t=1}^k \theta_t \bbe[Q((x^t, \hat y^t), z)]
&\le \eta_1 \theta_1  P(x^{0}, x) - (\mu+ \eta_k) \theta_k \bbe[P(x^k, x)] \nn\\
& \quad + \, \tsum_{i=1}^m \theta_1 \left(p_i^{-1} (1+\tau_1)-1\right) D_i(y^{0}_i, y_i).\label{ORIG_main}
\end{align}
\end{proposition}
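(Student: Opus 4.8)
The plan is to multiply the one-step recursion \eqnok{ORIG_rec} by the nonnegative weight $\theta_t$, sum over $t = 1, \ldots, k$, and show that every term other than the three boundary expressions on the right-hand side of \eqnok{ORIG_main} either telescopes away or is nonpositive in expectation. I would first split the weighted right-hand side into three groups: the primal ``diagonal'' terms $\theta_t[\eta_t P(x^{t-1},x) - (\mu+\eta_t) P(x^t,x)]$, the dual diagonal terms $\theta_t[(p_i^{-1}(1+\tau_t)-1) D_i(y_i^{t-1},y_i) - p_i^{-1}(1+\tau_t) D_i(y_i^t,y_i)]$ summed over $i$, and the remaining ``coupling'' terms $\theta_t[-\eta_t P(x^{t-1},x^t) + \langle \tilde x^t - x^t, U(\tilde y^t - y)\rangle - \tau_t p_{i_t}^{-1} D_{i_t}(y_{i_t}^{t-1},y_{i_t}^t)]$.

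Next I would telescope the two diagonal sums. For the primal part, pairing the incoming term $\theta_t \eta_t P(x^{t-1},x)$ at step $t$ with the outgoing term $-\theta_{t-1}(\mu+\eta_{t-1}) P(x^{t-1},x)$ from step $t-1$, condition \eqnok{cond_eta} together with $P \ge 0$ makes every interior coefficient nonpositive, leaving exactly $\theta_1 \eta_1 P(x^0,x)$ and $-\theta_k(\mu+\eta_k)\bbe[P(x^k,x)]$. The dual sum telescopes identically: condition \eqnok{cond_tau} and $D_i \ge 0$ leave the initial term $\tsum_{i=1}^m \theta_1(p_i^{-1}(1+\tau_1)-1) D_i(y_i^0,y_i)$ and a nonpositive terminal term $-\tsum_{i=1}^m \theta_k p_i^{-1}(1+\tau_k) D_i(y_i^k,y_i)$. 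These three surviving quantities are precisely the right-hand side of \eqnok{ORIG_main}, so it remains to prove that the coupling group, augmented by the retained terminal dual term, is nonpositive in expectation.

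For the coupling group I would rewrite the extrapolation as $\tilde x^t - x^t = \alpha_t(x^{t-1}-x^{t-2}) - (x^t-x^{t-1})$ and invoke condition \eqnok{cond_alpha}, namely $\alpha_t\theta_t = \theta_{t-1}$, together with $x^0 = x^{-1}$, to carry out a summation by parts that rewrites $\tsum_{t=1}^k \theta_t \langle \tilde x^t - x^t, U(\tilde y^t - y)\rangle$ as $\tsum_{t=2}^k \theta_{t-1}\langle x^{t-1}-x^{t-2}, U(\tilde y^t - \tilde y^{t-1})\rangle$ minus a single terminal inner product $\theta_k\langle x^k - x^{k-1}, U(\tilde y^k - y)\rangle$. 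Each interior inner product I would bound by Cauchy--Schwarz and Young's inequality, controlling $\|x^{t-1}-x^{t-2}\|^2 \le 2 P(x^{t-2},x^{t-1})$ through \eqnok{strong_h} and the retained negative term $-\theta_{t-1}\eta_{t-1} P(x^{t-2},x^{t-1})$, while expanding the dual increment $U(\tilde y^t - \tilde y^{t-1})$ through \eqnok{def_tyt_r} and \eqnok{def_yt_r}: it mixes the single component updated at step $t$, carried with weight $p^{-1}$, and the one updated at step $t-1$, carried with weight $1-p^{-1}$, whose squared norms are controlled by $\|y_i^s-y_i^{s-1}\|_*^2 \le 2 L_i D_i(y_i^{s-1},y_i^s)$ (strong convexity of $J_i$ with modulus $1/L_i$) and hence absorbed by the retained subtractions $-\theta_s\tau_s p_{i_s}^{-1} D_{i_s}(\cdot)$ for $s \in \{t-1,t\}$. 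The terminal inner product is handled the same way, now decomposing $U(\tilde y^k - y)$ into $\tsum_i(y_i^k - y_i)$, absorbed by the terminal dual term, and a step-$k$ component absorbed by the step-$k$ dual subtraction, with $\|x^k-x^{k-1}\|^2$ controlled by $\eta_k$. Conditions \eqnok{eta_tau_cond2}, \eqnok{eta_tau_cond1}, and \eqnok{eta_gamma_cond1} are exactly the inequalities that balance these Young estimates so that the net residual is nonpositive.

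I expect this coupling estimate to be the main obstacle. The routine part is the two telescopings; the real work is that the predicted dual $\tilde y^t$ carries the randomized reweighting $p_i^{-1}$, so the increments $\tilde y^t - \tilde y^{t-1}$ must be expanded and the conditional expectation $\bbe_t$ taken with care before the prox subtractions can absorb them, and that the single dual move made at any iteration $s$ appears in two consecutive increments (those indexed $s$ and $s+1$). Consequently each subtraction $-\theta_s\tau_s p_{i_s}^{-1} D_{i_s}(\cdot)$ must be split between two increments and allocated with the precise constants dictated by \eqnok{eta_tau_cond1}--\eqnok{eta_gamma_cond1}; verifying that this allocation closes is where the argument is delicate, after which collecting the surviving boundary terms gives \eqnok{ORIG_main}.
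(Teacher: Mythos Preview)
Your proposal is correct and follows essentially the same route as the paper: weight \eqnok{ORIG_rec} by $\theta_t$, telescope the primal and dual diagonals via \eqnok{cond_eta} and \eqnok{cond_tau}, then expand $\langle \tilde x^t - x^t, U(\tilde y^t - y)\rangle$ through the extrapolation and \eqnok{cond_alpha}, decompose $U(\tilde y^t - \tilde y^{t-1})$ into the step-$t$ and step-$(t-1)$ components, and absorb each piece pathwise via Young's inequality against $\eta\,P(x^{t-1},x^t)$ and the retained $\tau\,p_{i_s}^{-1} D_{i_s}$ terms using \eqnok{eta_tau_cond1}--\eqnok{eta_gamma_cond1}. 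One small clarification: no further conditional expectations are needed in the coupling estimate---the bound on $\tsum_t \theta_t \Delta_t$ holds for every sample path, since the Young-type inequalities are deterministic once $i_t$ is fixed; the expectation in \eqnok{ORIG_main} comes entirely from the expectation already present in \eqnok{ORIG_rec}.
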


\begin{proof}
Multiplying both sides of \eqnok{ORIG_rec} by $\theta_t$ and summing the resulting inequalities,
we have
\begin{align*}
\bbe[\tsum_{t=1}^k &\theta_t Q((x^t, \hat y^t), z)]  \le \bbe \left[\tsum_{t=1}^k \theta_t
\left( \eta_t P(x^{t-1}, x) - (\mu + \eta_t) P(x^t, x)
- \eta_t P(x^{t-1}, x^t)\right) \right] \\
& \quad + \tsum_{i=1}^m\bbe \left\{
\tsum_{t=1}^k \theta_t \left[ \left(p_i^{-1} (1+\tau_t)-1\right) D_i(y^{t-1}_i, y_i)- p_i^{-1} (1+ \tau_t) D_i(y^{t}_i, y_i)\right]\right\} \\
& \quad + \bbe\left[\tsum_{t=1}^k \theta_t \left( \langle \tilde x^t - x^t, U(\tilde y^t - y) \rangle
 - \tau_t p_{i_t}^{-1} D_{i_t}(y_{i_t}^{t-1}, y_{i_t}^t)\right)\right], 
\end{align*}
which, in view of the assumptions in \eqnok{cond_eta} and \eqnok{cond_tau}, then implies that
\begin{align}
\bbe[\tsum_{t=1}^k& \theta_t Q((x^t, \hat y^t), z)] 
\le  \eta_1 \theta_1 P(x^{0}, x) -  (\mu+ \eta_k) \theta_k \bbe[P(x^k, x)] \nn\\
& \quad + \, \tsum_{i=1}^m \left[ \theta_1 \left(p_i^{-1} (1+\tau_1)-1\right) D_i(y^{0}_i, y_i) 
-  p_i^{-1} \theta_k (1+ \tau_k) D_i(y^{k}_i, y_i)\right] \nn\\
& \quad - \, \bbe \left[ \tsum_{t=1}^k \theta_t \Delta_t\right],  \label{ORIG_sum}
\end{align}
where
\beq \label{ORIG_Delta}
\Delta_t := \eta_t P(x^{t-1}, x^t) - \langle \tilde x^t - x^t, U(\tilde y^t - y) \rangle
 + \tau_t p_{i_t}^{-1} D_{i_t}(y_{i_t}^{t-1}, y_{i_t}^t).
\eeq
We now provide a bound on $\tsum_{t=1}^k \theta_t \Delta_t$ in \eqnok{ORIG_sum}.
Note that by \eqnok{def_txt_r}, we have
\begin{align}
\langle \tilde x^t - x^t, U(\tilde y^t - y) \rangle &= 
\langle x^{t-1} - x^t, U(\tilde y^t - y) \rangle - \alpha_t \langle x^{t-2} - x^{t-1}, U(\tilde y^t - y) \rangle \nn\\
&=  \langle x^{t-1} - x^t, U(\tilde y^t - y) \rangle - \alpha_t \langle x^{t-2} - x^{t-1}, U(\tilde y^{t-1} - y) \rangle \nn\\
& \quad- \alpha_t \langle x^{t-2} - x^{t-1}, U(\tilde y^{t} - \tilde y^{t-1}) \rangle \nn \\
&= \langle x^{t-1} - x^t, U(\tilde y^t - y) \rangle - \alpha_t \langle x^{t-2} - x^{t-1}, U(\tilde y^{t-1} - y) \rangle \nn\\
& \quad- \alpha_t p_{i_t}^{-1} \langle x^{t-2} - x^{t-1}, y^{t}_{i_t} - y^{t-1}_{i_t} \rangle \nn\\
& \quad- \alpha_t (p_{i_{t-1}}^{-1}-1) \langle x^{t-2} - x^{t-1}, y^{t-2}_{i_{t-1}} - y^{t-1}_{i_{t-1}} \rangle, \label{bnd_inner_r}
\end{align}
where the last identity follows from the observation that by \eqnok{def_yt_r} and \eqnok{def_tyt_r},
\begin{align*}
U(\tilde y^{t} - \tilde y^{t-1})
& = \tsum_{i=1}^m \left\{\left[p_i^{-1} (y_i^t - y_i^{t-1}) + y_i^{t-1} \right] 
-\left[p_i^{-1} (y_i^{t-1} - y_i^{t-2}) + y_i^{t-2} \right]\right\}\\
&= \tsum_{i=1}^m \left\{\left[p_i^{-1} y_i^t - (p_i^{-1} -1) y_i^{t-1} \right] 
-\left[p_i^{-1} y_i^{t-1} - (p_i^{-1}-1) y_i^{t-2} \right]\right\}\\
&= \tsum_{i=1}^m \left[p_i^{-1} (y_i^t - y_i^{t-1})  + (p_i^{-1}-1)(y_i^{t-2}  - y_i^{t-1} ) \right] \\
&= p_{i_t}^{-1}(y^{t}_{i_t} - y^{t-1}_{i_t}) + (p_{i_{t-1}}^{-1}-1)(y^{t-2}_{i_{t-1}} - y^{t-1}_{i_{t-1}}).
\end{align*}
Using relation \eqnok{bnd_inner_r} in the definition of $\Delta_t$ in \eqnok{ORIG_Delta}, 
we have
\begin{align}
 \tsum_{t=1}^k \theta_t \Delta_t
&= \tsum_{t=1}^k \theta_t
\left[ \eta_t P(x^{t-1}, x^t)   \right. \nn\\
& \quad - \, \langle x^{t-1} - x^t, U(\tilde y^t - y) \rangle + \alpha_t \langle x^{t-2} - x^{t-1}, U(\tilde y^{t-1} - y) \rangle \nn\\
& \quad + \,  \alpha_t p_{i_t}^{-1} \langle x^{t-2} - x^{t-1}, y^{t}_{i_t} - y^{t-1}_{i_t} \rangle
 + \alpha_t (p_{i_{t-1}}^{-1}-1) \langle x^{t-2} - x^{t-1}, y^{t-2}_{i_{t-1}} - y^{t-1}_{i_{t-1}} \rangle \nn\\
&\quad + \, \left. p_{i_t}^{-1} \tau_t D_{i_t}(y^{t-1}_{i_t}, y^t_{i_t})\right]. \label{bound_Delta_r1}
\end{align}
Observe that by \eqnok{cond_alpha} and the fact that $x^{-1} = x^0$, 
\begin{align*}
&\tsum_{t=1}^k \theta_t \left[\langle x^{t-1} - x^t, U(\tilde y^t - y) \rangle - \alpha_t \langle x^{t-2} - x^{t-1}, U(\tilde y^{t-1} - y) \rangle\right]\\
&= \theta_k \langle x^{k-1} - x^k, U(\tilde y^k - y) \rangle \\
&=  \theta_k \langle x^{k-1} - x^k, U(y^k - y) \rangle 
+  \theta_k \langle x^{k-1} - x^k, U(\tilde y^k -y^k ) \rangle\\
&=  \theta_k \langle x^{k-1} - x^k, U(y^k - y) \rangle 
+  \theta_k (p_{i_k}^{-1} - 1) \langle x^{k-1} - x^k, y^k_{i_k} -y^{k-1}_{i_{k}}  \rangle,
\end{align*}
where the last identity follows from the definitions of $y^k$ and $\tilde y^k$ in \eqnok{def_yt_r} and  \eqnok{def_tyt_r},
respectively. Also, by the strong convexity of $P$ and $D_i$, we
have
\begin{align}
P(x^{t-1}, x^t)  \ge \tfrac{1}{2} \|x^{t-1} - x^t\|^2  \ \ \ \mbox{and} \ \ \
D_{i_t}(y^{t-1}_{i_t}, y^t_{i_t}) \ge \tfrac{1}{2 L_{i_t}} \|y^{t-1}_{i_t} - y^t_{i_t}\|^2.\nn
\end{align}
Using the previous three relations in \eqnok{bound_Delta_r1}, we have
\begin{align}
 \tsum_{t=1}^k \theta_t \Delta_t
&\ge \tsum_{t=1}^k \theta_t \left[ \tfrac{\eta_t }{2} \|x^{t-1} - x^t\|^2
+  \alpha_t p_{i_t}^{-1} \langle x^{t-2} - x^{t-1}, y^{t}_{i_t} - y^{t-1}_{i_t} \rangle
  \right. \nn \\
& \quad + \,  \left. \alpha_t (p_{i_{t-1}}^{-1}-1) \langle x^{t-2} - x^{t-1}, y^{t-2}_{i_{t-1}} - y^{t-1}_{i_{t-1}} \rangle 
+  \tfrac{\tau_t}{2 L_{i_t} p_{i_t}} \|y^{t-1}_{i_t} - y^t_{i_t}\|^2 \right] \nn\\
& \quad - \theta_k \langle x^{k-1} - x^k, U(y^k - y) \rangle 
-  \theta_k (p_{i_k}^{-1} - 1) \langle x^{k-1} - x^k, y^k_{i_k} -y^{k-1}_{i_{k}}  \rangle. \nn
\end{align}
Regrouping the terms in the above relation, and the fact that $x^{-1} = x^0$, we obtain
\begin{align}
 \tsum_{t=1}^k \theta_t \Delta_t 
&\ge \theta_k \left[\tfrac{\eta_k}{4} \|x^{k-1}- x^k\|^2 - \langle x^{k-1} - x^k, U(y^k - y) \rangle \right] \nn\\
&\quad + \theta_k 
\left[\tfrac{\eta_k}{4} \|x^{k-1}- x^k\|^2 - (p_{i_k}^{-1} - 1) \langle x^{k-1} - x^k, y^k_{i_k} -y^{k-1}_{i_{k}}  \rangle
+  \tfrac{\tau_k }{4 L_{i_k} p_{i_k}} \|y^{k-1}_{i_k} - y^k_{i_k}\|^2\right] \nn\\
& \quad + \, \tsum_{t=2}^k \theta_t \left[
\tfrac{\alpha_t}{ p_{i_t}} \langle x^{t-2} - x^{t-1},   y^{t}_{i_t} - y^{t-1}_{i_t} \rangle
 +  \tfrac{\tau_t }{4 L_{i_t} p_{i_t}} \|y^{t-1}_{i_t} - y^t_{i_t}\|^2  \right] \nn \\
& \quad + \, \tsum_{t=2}^k
\left[ \alpha_t \theta_t (p_{i_{t-1}}^{-1} - 1) \langle x^{t-2} - x^{t-1},   y^{t-2}_{i_{t-1}} - y^{t-1}_{i_{t-1}} \rangle
+  \tfrac{\tau_{t-1} \theta_{t-1} }{4 L_{i_{t-1}} p_{i_{t-1}}} \|y^{t-2}_{i_{t-1}} - y^{t-1}_{i_{t-1}}\|^2  \right]  \nn \\
& \quad + \, \tsum_{t=2}^k \tfrac{\theta_{t-1} \eta_{t-1} }{2} \|x^{t-2} - x^{t-1}\|^2 \nn \\
&\ge \theta_k \left[\tfrac{\eta_k}{4} \|x^{k-1}- x^k\|^2 - \langle x^{k-1} - x^k, U(y^k - y) \rangle \right] \nn\\
&\quad + \, \theta_k \left( \tfrac{\eta_k}{4} - \tfrac{L_{i_k} (1- p_{i_k})^2}{\tau_{k} p_{i_k}} \right) \|x^{k-1}- x^k\|^2 \nn\\
& \quad + \, \tsum_{t=2}^k \left[\tfrac{\theta_{t-1} \eta_{t-1}}{2} -  \tfrac{L_{i_t} \alpha_t^2 \theta_t}{\tau_t p_{i_t}}  
- \tfrac{\alpha_t^2 \theta_t^2 (1- p_{i_{t-1}})^2 L_{i_{t-1}} }{\tau_{t-1} \theta_{t-1} p_{i_{t-1}} }\right] \|x^{t-2}-x^{t-1}\|^2 \nn\\
&=  \theta_k \left[\tfrac{\eta_k}{4} \|x^{k-1}- x^k\|^2 - \langle x^{k-1} - x^k, U(y^k - y) \rangle \right] \nn\\
&\quad + \, \theta_k \left( \tfrac{\eta_k}{4} - \tfrac{L_{i_k} (1- p_{i_k})^2}{\tau_k p_{i_k}} \right) \|x^{k-1}- x^k\|^2 \nn\\
& \quad + \, \tsum_{t=2}^k \theta_{t-1} \left(\tfrac{ \eta_{t-1}}{2} - \tfrac{L_{i_t} \alpha_t }{\tau_t p_{i_t}}
 - \tfrac{(1- p_{i_{t-1}})^2 L_{i_{t-1}} }{\tau_{t-1} p_{i_{t-1}} }  \right) \|x^{t-2}-x^{t-1}\|^2  \nn \\
&\ge\theta_k \left[\tfrac{\eta_k}{4} \|x^{k-1}- x^k\|^2 - \langle x^{k-1} - x^k, U(y^k - y) \rangle \right], \label{bound_Delta_r2}
\end{align}
where the second inequality follows from the simple relation that 
\beq \label{simple}
b \langle u, v\rangle + a \|v\|^2/ 2 \ge - b^2 \|u\|^2 / (2a), \forall a > 0,
\eeq
and the last inequality follows from \eqnok{eta_tau_cond1} and
\eqnok{eta_tau_cond2}.
Plugging the bound \eqnok{bound_Delta_r2} into \eqnok{ORIG_sum}, we have
\begin{align}
\tsum_{t=1}^k& \theta_t \bbe[Q((x^t, \hat y^t), z)]
\le \theta_1 \eta_1 P(x^{0}, x) - \theta_k (\mu+ \eta_k) \bbe[P(x^k, x)] 
+ \tsum_{i=1}^m \theta_1 \left(p_i^{-1} (1+\tau_1)-1\right) D_i(y^{0}_i, y_i) \nn\\
& \quad - \, \theta_k \bbe \left[
\tfrac{\eta_k}{4} \|x^{k-1}- x^k\|^2 - \langle x^{k-1} - x^k, U(y^k - y) \rangle +
\tsum_{i=1}^m  p_i^{-1} (1+ \tau_k) D_i(y^{k}_i, y_i)  \right]. \nn
\end{align}
Also observe that by \eqnok{eta_gamma_cond1} and \eqnok{simple},
\begin{align*}
&\tfrac{\eta_k}{4} \|x^{k-1}- x^k\|^2 - \langle x^{k-1} - x^k, U(y^k - y) \rangle +
\tsum_{i=1}^m  p_i^{-1} (1+ \tau_k) D_i(y^{k}_i, y_i)  \\
&\ge \tfrac{ \eta_k}{4} \|x^{k-1}- x^k\|^2 + 
\tsum_{i=1}^m \left[ -\langle x^{k-1} - x^k, y^k_i - y_i \rangle +  \tfrac{1+\tau_k}{2 L_i p_i} \|y^k_i - y_i\|^2\right]  \\
&\ge \left( \tfrac{ \eta_k}{4} - \tfrac{\tsum_{i=1}^m (p_i L_i)}{2(1+\tau_k)} \right) \|x^{k-1} - x^k\|^2 \ge 0,
\end{align*}
The result then immediately follows by combining the above two conclusion.
\end{proof}

\vgap

\subsection{Proof of main convergence results}\label{sec_conver}
We now provide a proof for Theorem~\ref{cor_det1} which describes
the main convergence properties of the deterministic PDG method.

We first specialize Proposition~\ref{main_theorm} for the PDG method
applied to \eqnok{cpsaddle_f}.

\begin{proposition} \label{main_theorem_d}
Suppose
that $\{\tau_t\}$, $\{\eta_t\}$, and $\{\alpha_t\}$ in the PDG method satisfy
\begin{align}
%\tau_1 &= 0, \label{cond_d0}\\
\theta_t \tau_t &\le \theta_{t-1} (1+ \tau_{t-1}), t = 2, \ldots, k, \label{cond_d1}\\
\theta_t \eta_t &\le \theta_{t-1} (\mu + \eta_{t-1}), t = 2, \ldots, k, \label{cond_d2}\\
\eta_{t-1} \tau_{t} &\ge 2 L_f \alpha_t, t = 2, \ldots, k, \label{cond_d3}\\
\eta_k (1+\tau_k) &\ge 2 L_f, \label{cond_d4}\\
\alpha_t  &= \theta_{t-1} / \theta_t, t = 2, \ldots, k, \label{cond_d5} 
\end{align}
for some $\theta_t \ge 0$, $t = 1, \ldots, k$. Also let us denote $z^t=(x^t,g^t)$, and
\beq \label{def_avg_output}
\bar z^k := \left( \tsum_{t=1}^k \theta_t \right)^{-1} \tsum_{t=1}^k \theta_t z^t.
\eeq
Then, for any $k \ge 1$ and any given $(x, g) \in X \times {\cal G}$, we have
\begin{align} 
\label{PDG_main_d}
\left( \tsum_{t=1}^k \theta_t \right) Q_f(\bar z^k, z) + \theta_k(\mu+ \eta_k) P(x^k, x) 
&\le  \theta_1 \eta_1  P(x^{0}, x) + \theta_1 \tau_1 D_f(g^{0}, g).
\end{align}
\end{proposition}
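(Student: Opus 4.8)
The plan is to obtain Proposition~\ref{main_theorem_d} as the $m=1$ specialization of the general recursion in Proposition~\ref{main_theorm}, followed by a Jensen-type convexity argument that passes from the individual iterates $z^t$ to their weighted average $\bar z^k$. First I would invoke the observation made just before the general analysis, namely that the deterministic PDG method is exactly RPDG applied to \eqnok{cpsaddle_f}, i.e.\ the case $m=1$ with ${\cal Y}_1 = {\cal G}$, $J_1 = J_f$, and $D_1 = D_f$; by \eqnok{prob_rand} this forces $p_1 = 1$. With $p_1=1$ there is no randomness, so $i_t = 1$ for every $t$, the dual sequences coincide ($\hat y^t = y^t = g^t$), and each conditional expectation $\bbe_t[\cdot]$ is the identity. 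Under this identification the gap function $Q$ of \eqnok{def_gap} reduces to $Q_f$ of \eqnok{def_gap_f}, since $U = I$ and $J = J_f$, and hence $Q((x^t,\hat y^t),z) = Q_f(z^t,z)$.

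Next I would verify that the hypotheses \eqnok{cond_d1}--\eqnok{cond_d5} are precisely what \eqnok{cond_tau}--\eqnok{cond_alpha} become when $m=1$ and $p_1=1$. The key simplification is that the factors $(1-p_i)^2$ in \eqnok{eta_tau_cond1} and in the second summand of \eqnok{eta_tau_cond2} vanish: \eqnok{eta_tau_cond1} becomes vacuous ($\eta_k/4 \ge 0$), while \eqnok{eta_tau_cond2} collapses to $\eta_{t-1}\tau_t \ge 2 L_f \alpha_t$, which is \eqnok{cond_d3}. Likewise \eqnok{cond_tau} becomes $\theta_t \tau_t \le \theta_{t-1}(1+\tau_{t-1})$ and \eqnok{eta_gamma_cond1} becomes $\eta_k(1+\tau_k) \ge 2 L_f$, matching \eqnok{cond_d1} and \eqnok{cond_d4}; the remaining \eqnok{cond_eta} and \eqnok{cond_alpha} are literally \eqnok{cond_d2} and \eqnok{cond_d5}. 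Specializing the dual term in \eqnok{ORIG_main}, namely $\tsum_{i=1}^m \theta_1(p_i^{-1}(1+\tau_1)-1)D_i(y^0_i,y_i)$, to $\theta_1 \tau_1 D_f(g^0,g)$ and dropping the (trivial) expectations, Proposition~\ref{main_theorm} then yields
\[
\tsum_{t=1}^k \theta_t\, Q_f(z^t, z) \le \theta_1 \eta_1 P(x^{0}, x) - (\mu+\eta_k)\theta_k P(x^k, x) + \theta_1 \tau_1 D_f(g^{0}, g).
\]

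Finally I would invoke the joint convexity of $\bar z \mapsto Q_f(\bar z, z)$: writing out \eqnok{def_gap_f}, the first bracket $h(\bar x)+\mu\w(\bar x)+\langle \bar x, g\rangle - J_f(g)$ is convex in $\bar x$ (as $h,\w$ are convex and $\langle\bar x,g\rangle$ is linear), while the negated second bracket contributes $-\langle x,\bar g\rangle + J_f(\bar g)$, which is convex in $\bar g$ since $J_f$ is convex. Because $\theta_t \ge 0$ and $\bar z^k$ in \eqnok{def_avg_output} is the corresponding convex combination of the $z^t$, Jensen's inequality gives $\bigl(\tsum_{t=1}^k \theta_t\bigr) Q_f(\bar z^k, z) \le \tsum_{t=1}^k \theta_t\, Q_f(z^t, z)$; combining this with the displayed bound and moving the $P(x^k,x)$ term to the left produces \eqnok{PDG_main_d}. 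I expect the step needing the most care to be the bookkeeping in the second paragraph, i.e.\ checking that each of \eqnok{cond_tau}--\eqnok{cond_alpha} degenerates exactly into the stated deterministic condition once $p_1=1$ annihilates the $(1-p_i)^2$ terms; the convexity argument itself is routine.
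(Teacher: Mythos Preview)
Your proposal is correct and follows essentially the same approach as the paper: specialize Proposition~\ref{main_theorm} to $m=1$, $p_1=1$ (noting $\hat y^t=g^t$ and that expectations become trivial), check that \eqnok{cond_d1}--\eqnok{cond_d5} imply \eqnok{cond_tau}--\eqnok{cond_alpha} in this case, and then use convexity of $Q_f(\bar z,z)$ in $\bar z$ to pass to $\bar z^k$. Your verification of how each condition degenerates (in particular the vanishing of the $(1-p_i)^2$ terms) is more explicit than the paper's, but the argument is the same.
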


\begin{proof}
Notice that in the deterministic PDG method, we have $m=1$, $p_i =1$, and $\hat y^t = g^t$.
It can be easily seen that the assumptions in \eqnok{cond_tau}-\eqnok{cond_alpha}
are implied by those in \eqnok{cond_d1}-\eqnok{cond_d5}. It then follows from \eqnok{ORIG_main}
that
\[
\tsum_{t=1}^k \theta_t Q_f(z^t, z) 
\le \theta_1 \eta_1  P(x^{0}, x) -  \theta_k(\mu+ \eta_k) P(x^k, x) + \theta_1 \tau_1 D_f(g^{0}, g). 
%\le \theta_1 (\eta_1 + L_f \tau_1) P(x^0, x) - \theta_k(\mu+ \eta_k) P(x^k, x),
\]
%where the second inequality follows from \eqnok{bound_Dy1}.
Dividing both sides of the above inequality by  $ \tsum_{t=1}^k \theta_t $
and using the convexity of $Q(\bar z, z)$ w.r.t. $\bar z$, we have
\begin{align*}
 \left( \tsum_{t=1}^k \theta_t \right) Q_f(\bar z^k, z) 
 \le \tsum_{t=1}^k \theta_t Q_f(z^t, z)
&\le  \theta_1 \eta_1 P(x^0, x)  -  \theta_k(\mu+ \eta_k) P(x^k, x) 
+ \theta_1 \tau_1 D_f(g^{0}, g).
\end{align*}
Rearranging the terms in the above relation, we obtain \eqnok{PDG_main_d}.
\end{proof}

\vgap

We are now ready to show Theorem~\ref{cor_det1}.

\noindent{\bf Proof of Theorem~\ref{cor_det1}}
We first show part a).
It can be easily checked that \eqnok{cond_d1}-\eqnok{cond_d5} are satisfied with the selection of 
$\{\tau_t\}$, $\{\eta_t\}$, $\{\alpha_t\}$, and $\{\theta_t\}$ in \eqnok{constant_step_PDG1}. 
Using \eqnok{PDG_main_d} (with $x = x^*$ and $y = y^*$), \eqnok{bound_Dy0}, and 
the fact that $Q_f(\bar z, z^*) \ge 0$, we have
\[
 \theta_k(\mu+ \eta_k) P(x^k, x^*) 
\le \theta_1 (\eta_1 + L_f \tau_1) P(x^0, x^*), \ \ \forall k \ge 1.
\]
%where the second inequality follows from \eqnok{bound_Dy0} and \eqnok{strong_h}. Rearranging the terms in the above inequality and 
Using the parameter settings in \eqnok{constant_step_PDG1}, we conclude that
\begin{align*}
P(x^k,x^*) 
\le \tfrac{\theta_1 (\eta_1 + L_f \tau_1)}{\theta_k(\mu+ \eta_k)} P(x^0, x^*) 
=\tfrac{(\sqrt{2 L_f \mu} +  L_f \sqrt{2 L_f/\mu})}{\alpha (\mu + \sqrt{2 L_f\mu}) } \alpha^k  P(x^0, x^*)
= \tfrac{\mu + L_f}{\mu} \alpha^k P(x^0, x^*).
\end{align*}
Also using \eqnok{PDG_main_d} and the fact that $P(x^k, x) \ge 0$, we have
\beq \label{cor1_temp}
\left( \tsum_{t=1}^k \theta_t \right) Q_f(\bar z^k, z) 
\le  \theta_1 \eta_1  P(x^{0}, x) + \theta_1 \tau_1 D_f(g^{0}, g), \ \ \ \forall z \in Z.
\eeq
Denoting $\bar g^k_* := (\nabla f_1(\bar x^k); \ldots; \nabla f_m(\bar x^k))$,
we conclude from \eqnok{bound_Dy0} that
\begin{align*}
D_f(g^{0}, \bar g^k_*) 
&\le \tfrac{L_f}{2}\|\bar x^k - x^0\|^2 \le \tfrac{L_f}{2} [\tsum_{t=1}^k \theta_t]^{-1} \tsum_{t=1}^k \theta_t\|x^t - x^0\|^2 \\
&\le \tfrac{L_f}{2} [\tsum_{t=1}^k \theta_t]^{-1} \tsum_{t=1}^k \theta_t (\|x^t - x^*\|^2 + \|x^0 - x^*\|^2)\\
&\le \tfrac{L_f}{2} \left[ \tfrac{2(\mu + L_f)}{\mu} P(x^0, x^*) +  \|x^0 - x^*\|^2\right] \le L_f \left( \tfrac{2\mu + L_f}{\mu}\right) P(x^0, x^*),
\end{align*}
where the second inequality follows from the convexity of $\|\cdot\|^2$, the third inequality
follows from the triangular inequality, the fourth inequality follows from $\|x^t - x^*\|^2  \le 2 P(x^t, x^*)$
and \eqnok{PDG_cor1_result}, and the last inequality follows from $\|x^0 - x^*\|^2  \le 2 P(x^0, x^*)$.
Also note that by the definition of $\theta_t$, we have
\beq \label{sum_theta}
\tsum_{t=1}^k \theta_t = \tsum_{t=1}^k \alpha^{-t} = \tfrac{1-\alpha^k}{(1-\alpha) \alpha^k} \ge \tfrac{1}{\alpha^k},
\eeq
where the last inequality follows from the fact that $\alpha \le 1$ due to \eqnok{constant_step_PDG1}.
Fixing $g = \bar g^k_*$ in \eqnok{cor1_temp} and using the above two relations, we
obtain
\begin{align*}
Q_f(\bar z^k, (x, \bar g^k_*)) 
&\le \alpha^k \left[ \theta_1 \eta_1  P(x^{0}, x)  + L_f \theta_1 \tau_1
 \left( \tfrac{2\mu + L_f}{\mu}\right) P(x^0, x^*) \right] \\
&\le (\mu +  \sqrt{2L\mu}) \alpha^k \left[  P(x^{0}, x)  +  \tfrac{L_f}{\mu}(2+\tfrac{L_f}{\mu}) P(x^0,x^*) \right] \\
&= \tfrac{\mu \alpha^k}{1-\alpha} \left[  P(x^{0}, x)  +  \tfrac{L_f}{\mu}(2+\tfrac{L_f}{\mu}) P(x^0,x^*) \right].
\end{align*}
The result in \eqnok{PDG_cor1_result1} then directly follows from the above relation and \eqnok{PDgap1}.
If $X$ is bounded, the result in \eqnok{PDG_cor1_result2} then follows from the above relation, \eqnok{PDgap1},
and \eqnok{PDgap2}.

We now show part b). It is trivial to check that the conditions in \eqnok{cond_d1}-\eqnok{cond_d5}
hold by using our selection of $\{\tau_t\}$, $\{\eta_t\}$, $\{\alpha_t\}$, and $\{\theta_t\}$.
Using \eqnok{PDG_main_d} and the facts $\tau_1=0$ and $P(x^k,x) \ge 0$, we have
\[
\left( \tsum_{t=1}^k \theta_t \right) Q_f(\bar z^k, z) \le  \theta_1 \eta_1  P(x^{0}, x) = 4 L_f P(x^{0}, x).
\]
which, in view of \eqnok{def_pd_gap_unbounded} and \eqnok{PDgap1} and the fact that
$\tsum_{t=1}^k \theta_t = k(k+1)/2$,
clearly implies \eqnok{PDG_cor2_result}. In case $X$ is bounded,
the result in \eqnok{PDG_cor2_result1} immediately follows from \eqnok{PDgap1}, 
\eqnok{PDgap2}, and the above inequality.
\endproof

\vgap

We are now ready to provide a proof for Theorem~\ref{theorem_RPDG_s}, which 
describes the main convergence properties of the RPDG method applied to
strongly convex problems with $\mu > 0$.
\vgap

\noindent {\bf Proof of Theorem~\ref{theorem_RPDG_s}.}
It can be easily checked that the
conditions in \eqnok{cond_tau}-\eqnok{cond_alpha}
are satisfied with our requirements \eqnok{constant_step_RPDG}-\eqnok{cond_s3} of $\{\tau_t\}$, $\{\eta_t\}$, $\{\alpha_t\}$, and $\{\theta_t\}$.
Using the fact that $Q((x^t, \hat y^t), z^*) \ge 0$ , 
we then conclude from \eqnok{ORIG_main} (with $x= x^*$ and $y = y^*$)
that, for any $k\ge 1$, 
\begin{align*}
\bbe[P(x^k, x^*)] 
&\le \tfrac{1}{\theta_k(\mu+ \eta)} \left[\theta_1 \eta  P(x^{0}, x^*) + \tfrac{\theta_1 \alpha}{1-\alpha}  D(y^{0}, y^*)\right] 
\le  \left (1 + \tfrac{L_f \alpha}{(1-\alpha)\eta} \right) \alpha^k P(x^0, x^*), 
%\le (1+\tfrac{ L_f \alpha}{(1-\alpha) \eta})\alpha^{k} P(x^0, x^*), \ \ \forall k \ge 1,
%&=\tfrac{\eta(1-\alpha) + L \alpha}{(1-\alpha)(\mu + \eta)}  P(x^0, x^*)
\end{align*}
where the first inequality follows from \eqnok{constant_step_RPDG} and \eqnok{cond_s1}, and the second inequality follows from \eqnok{cond_s2} and \eqnok{bound_Dy1}. 

Let us denote $\bar y^k \equiv (\tsum_{t=1}^k \theta_t)^{-1} \tsum_{t=1}^k (\theta_t \hat y^t)$,
$\bar z^k = (\bar x^k, \bar y^k)$. In view of \eqnok{psi_Q}, the convexity of $\|\cdot\|$, and \eqnok{strong_h}, we have
\begin{align}\label{rel1}
\bbe[\Psi(\bar x^k)-\Psi(x^*)]
%\le \bbe[Q(\bar{z}^k,z^*)]+\tfrac{L_f}{2}\bbe[\|\bar{x}^k-x^*\|^2]
&\le \bbe[Q(\bar{z}^k,z^*)]+\tfrac{L_f}{2}(\tsum_{t=1}^{k}\theta_t)^{-1}\bbe[\tsum_{t=1}^{k}\theta_t\|x^t-x^*\|^2]\nn\\
&\le \bbe[Q(\bar{z}^k,z^*)]+L_f(\tsum_{t=1}^{k}\theta_t)^{-1}\bbe[\tsum_{t=1}^{k}\theta_tP(x^t,x^*)].
\end{align}
Using \eqnok{ORIG_main} (with $x=x^*$ and $y=y^*$), the fact that $P(x^k, x) \ge 0$, and \eqnok{sum_theta}, we obtain
\begin{align*}
\bbe[Q(\bar z^k, z^*)] 
&\le \left( \tsum_{t=1}^k \theta_t \right)^{-1}\tsum_{t=1}^{k} \theta_t \bbe[Q((x^t,\hat y^t),z^*)]
%&\le \tfrac{\alpha^k(1-\alpha)}{1-\alpha^k} \left (\alpha^{-1}\eta + L_f \right) P(x^0, x^*)
\le \alpha^k \left (\alpha^{-1}\eta + \tfrac{L_f}{1-\alpha} \right) P(x^0, x^*).
\end{align*}
We conclude from \eqnok{RPDG_s_main} and the definition of $\{\theta_t\}$ that
\begin{align*}
(\tsum_{t=1}^{k}\theta_t)^{-1}\bbe[\tsum_{t=1}^{k}\theta_tP(x^t,x^*)]
& =(\tsum_{t=1}^k \alpha^{-t})^{-1}\tsum_{t=1}^k \alpha^{-t}(1+\tfrac{ L_f \alpha}{(1-\alpha)\eta})\alpha^{t} P(x^0, x^*)\\
& \le \tfrac{1-\alpha}{\alpha^{-k}-1}\tsum_{t=1}^{k}\tfrac{\alpha^{t}}{\alpha^{3t/2}}(1+\tfrac{ L_f \alpha}{(1-\alpha)\eta})P(x^0, x^*)\\
& = \tfrac{1-\alpha}{\alpha^{-k}-1}\tfrac{\alpha^{-k/2}-1}{1-\alpha^{1/2}}(1+\tfrac{ L_f \alpha}{(1-\alpha)\eta})P(x^0, x^*)\\
& = \tfrac{1+\alpha^{1/2}}{1+\alpha^{-k/2}}(1+\tfrac{ L_f \alpha}{(1-\alpha)\eta})P(x^0, x^*)
\le 2\alpha^{k/2}(1+\tfrac{ L_f \alpha}{(1-\alpha)\eta})P(x^0, x^*).
\end{align*} 
%and $\tilde y_* := (\nabla f_1(\bbe[\bar x^k]); \ldots; \nabla f_m(\bbe[\bar x^k]))$,

%By the definition of $Q$, and the convexity of $h$ and $\w$, we have
%\begin{align}
%\bbe[Q(\bar z^k, (\tilde y^*, x^*))] &= \bbe[h(\bar x^k) + \mu \w(x^k) + \langle \bar x^k, U \tilde y^*\rangle - J(\tilde y^*)] \nn\\
%& \quad - \, \bbe[h(x^*) + \mu \w(x^*) + \langle x^*, U \bar y^k\rangle - J(\bar y^k)] \nn\\
%&\ge  \bbe[h(\bar x^k) + \mu \w(\bar x^k) + \langle \bar x^k, U \tilde y^* \rangle - J(\tilde y^*)] - \Psi^* \nn\\
%&\ge h(\bbe[\bar x^k]) + \mu \w(\bbe[\bar x^k]) + \langle \bbe [\bar x^k], U \tilde y^* \rangle - J(\tilde y^*)] - \Psi^* \nn\\
%&= \Psi(\bbe[\bar x^k]) - \Psi^*.  \label{bnd_e_Q}
%\end{align} 
Using the above two relations, and \eqnok{rel1}, we obtain
\begin{align*}
\bbe[\Psi(\bar x^k) - \Psi(x^*)] 
&\le \alpha^k \left (\alpha^{-1}\eta + \tfrac{L_f}{1-\alpha} \right) P(x^0, x^*)
  +  L_f 2\alpha^{k/2}\left(1+\tfrac{ L_f \alpha}{(1-\alpha)\eta}\right)P(x^0, x^*)\\
&\le \alpha^{k/2}\left(\alpha^{-1}\eta + \tfrac{3-2\alpha}{1-\alpha}L_f+\tfrac{2 L_f^2 \alpha}{(1-\alpha)\eta}\right)P(x^0,x^*).
\end{align*}
\endproof

\subsection{Proof of the lower complexity bound}\label{sec_lower_bnd}
This subsection is devoted to the proof of Theorem~\ref{the_lower_bound}, which describes the performance limit for 
randomized incremental gradient methods. 

The following result provides an explicit expression for the optimal solution of
\eqnok{worst_problem}.

\begin{lemma}
Let $q$ be defined in \eqnok{def_q1}, $x^*_{i,j}$ is the $j$-th element of $x_i$, and
define 
\beq \label{opt_sol_worst}
x_{i,j}^* = q^j, i = 1, \ldots, m; j = 1, \ldots, \tilde n.
\eeq
Then
$x^*$ is the unique optimal solution of \eqnok{worst_problem}.
\end{lemma}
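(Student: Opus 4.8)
The plan is to exploit the fact that \eqref{worst_problem} is separable across the blocks $x_1,\dots,x_m$: since the objective is a sum of identical functions $\phi(x_i) := f_i(x_i) + \tfrac{\mu}{2}\|x_i\|_2^2$, one per block, minimizing $\Psi$ amounts to solving the single $\tilde n$-dimensional problem $\min_{x_i}\phi(x_i)$ independently in each block. Moreover $\phi$ is strongly convex: its Hessian is $\tfrac{\mu(\cQ-1)}{4}A + \mu I \succeq \mu I \succ 0$, since $A\succeq 0$ (as already verified in the excerpt) and $\cQ>1$. Hence each block has a \emph{unique} minimizer, which I will characterize through the first-order optimality condition $\nabla\phi(x_i)=0$.

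Writing out this condition and dividing through by $\mu$, I obtain the tridiagonal linear system
\[
(cA+I)\,x_i = c\,e_1, \qquad c := \tfrac{\cQ-1}{4}.
\]
I would then verify directly that the proposed vector $x_{i,j}=q^j$ solves this system by checking the three distinct row types. For an interior index $1<j<\tilde n$ the equation reads $-c\,x_{j-1}+(2c+1)x_j-c\,x_{j+1}=0$; substituting $x_j=q^j$ and dividing by $q^{j-1}$ reduces this to the characteristic equation $c\,q^2-(2c+1)q+c=0$, equivalently $q+q^{-1}=2+c^{-1}$. A short computation confirms it: $q+q^{-1}=\tfrac{2(\cQ+1)}{\cQ-1}=2+\tfrac{4}{\cQ-1}=2+c^{-1}$, so every interior equation holds.

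The two boundary rows are where the specific parameter choices enter, and I expect verifying the last row to be the crux. The first row $(2c+1)x_1-c\,x_2=c$ becomes, after substitution, the identity $q=c(1-q)^2$, which holds because $1-q=\tfrac{2}{\sqrt{\cQ}+1}$ gives $c(1-q)^2=\tfrac{\cQ-1}{(\sqrt{\cQ}+1)^2}=\tfrac{\sqrt{\cQ}-1}{\sqrt{\cQ}+1}=q$. The last row $-c\,x_{\tilde n-1}+(\kappa c+1)x_{\tilde n}=0$ becomes, after dividing by $q^{\tilde n-1}$, the requirement $\kappa = q^{-1}-c^{-1}$; I would then check that the chosen $\kappa=\tfrac{\sqrt{\cQ}+3}{\sqrt{\cQ}+1}$ indeed equals $q^{-1}-c^{-1}$, using the factorization $(\sqrt{\cQ}+1)^2-4=(\sqrt{\cQ}+3)(\sqrt{\cQ}-1)$. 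This is precisely the reason the worst-case construction replaces the classical diagonal entry $2$ by $\kappa$: it is the unique value making the finite-dimensional recurrence terminate exactly at the pure geometric tail $q^j$, so no second solution branch $q^{-j}$ is needed to meet the boundary condition. Finally, uniqueness requires no additional argument: since $cA+I\succ 0$ is invertible (equivalently, by the strong convexity noted above), $x^*$ is the unique stationary point and hence the unique global minimizer of \eqref{worst_problem}.
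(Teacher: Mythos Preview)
Your proof is correct and follows essentially the same approach as the paper: both verify that the proposed $x^*$ satisfies the first-order optimality condition $(A+\tfrac{4}{\cQ-1}I)x_i^*=e_1$ (your $(cA+I)x_i=ce_1$) by checking the three row types of the tridiagonal system. Your write-up is in fact more detailed than the paper's---you carry out the algebraic verifications explicitly and, unlike the paper, you spell out the uniqueness argument via strong convexity.
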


\begin{proof}
It can be easily seen that $q$ is the smallest root of the equation 
\beq \label{def_q}
q^2 - 2 \tfrac{\cQ + 1}{\cQ-1} q + 1 = 0.
\eeq
Note that
$x^*$ satisfies the optimality condition of \eqnok{worst_problem}, i.e.,
\beq \label{opt_cond_worst}
\left(A + \tfrac{4}{\cQ - 1} I \right)  x^*_i = e_1,  \ \ \ i = 1, \ldots, m.
\eeq
Indeed, we can write the coordinate form of \eqnok{opt_cond_worst} as
\begin{align}
 2 \tfrac{\cQ+1}{\cQ-1} x^*_{i,1} - x^*_{i,2} &= 1, \\
x^*_{i,j+1} - 2 \tfrac{\cQ+1}{\cQ-1} x^*_{i,j} + x^*_{i, j-1} &= 0, \ j = 2, 3, \ldots, \tilde n - 1,\\
- (\kappa + \tfrac{4}{\cQ -1}) x^*_{i, \tilde n} + x^*_{i, \tilde n -1} &= 0,
\end{align}
where the first two equations follow directly from the definition of $x^*$ and relation \eqnok{def_q},
and the last equation is implied by the definitions of $\kappa$ and $x^*$ in \eqnok{defA}
and \eqnok{opt_sol_worst}, respectively.
\end{proof}

We also need a few technical results to establish the lower complexity bounds.

\begin{lemma} \label{lemma_lower_bnd}
\begin{itemize}
\item [a)] For any $x > 1$, we have
\beq \label{bnd_log}
\log (1 - \tfrac{1}{x} ) \ge - \tfrac{1}{x-1}.
\eeq
\item [b)] Let $\rho, q, \bar q \in (0,1)$ be given.
If we have
\[
\tilde n \ge  \tfrac{t \log\bar q  + \log (1 - \rho)}{2 \log q},
\]
for any $ t \ge 0$, then 
\[
\bar q^t - q^{2 \tilde n} \ge \rho \bar q^t (1 - q^{2 \tilde n}).
\]
\end{itemize}
\end{lemma}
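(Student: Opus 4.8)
The plan is to dispatch both parts by elementary reductions to the standard logarithmic inequality $\log(1+s) \le s$, which holds for every $s > -1$.

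For part a), I would first rewrite $1 - \tfrac{1}{x} = \tfrac{x-1}{x}$, so that $\log(1 - \tfrac{1}{x}) = -\log\tfrac{x}{x-1} = -\log\left(1 + \tfrac{1}{x-1}\right)$. Since $x > 1$ gives $\tfrac{1}{x-1} > 0 > -1$, applying $\log(1+s) \le s$ with $s = \tfrac{1}{x-1}$ yields $\log\left(1 + \tfrac{1}{x-1}\right) \le \tfrac{1}{x-1}$, and negating both sides produces exactly $\log(1 - \tfrac{1}{x}) \ge -\tfrac{1}{x-1}$. There is essentially nothing delicate here.

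For part b), the key first step is to convert the hypothesis on $\tilde n$ into a clean bound on $q^{2\tilde n}$. Since $q \in (0,1)$ we have $\log q < 0$, so multiplying the assumed inequality $\tilde n \ge \tfrac{t \log \bar q + \log(1-\rho)}{2\log q}$ through by the negative quantity $2\log q$ reverses the direction and gives $2\tilde n \log q \le t\log\bar q + \log(1-\rho)$. Exponentiating (using monotonicity of $\exp$) then yields
\[
q^{2\tilde n} \le (1-\rho)\,\bar q^{\,t}.
\]
I would flag the sign bookkeeping at this step --- namely that $\log q$ is negative, so the inequality flips --- as the only point requiring genuine care; everything else is routine.

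With that bound in hand, I would finish by rearranging the target inequality. Expanding the right-hand side, $\bar q^t - q^{2\tilde n} \ge \rho\bar q^t(1 - q^{2\tilde n})$ is equivalent to $\bar q^t(1-\rho) \ge q^{2\tilde n}(1 - \rho\bar q^t)$. Because $\rho, \bar q \in (0,1)$ and $t \ge 0$ force $0 \le \rho\bar q^t \le 1$, the factor $1 - \rho\bar q^t$ lies in $[0,1]$, whence $q^{2\tilde n}(1-\rho\bar q^t) \le q^{2\tilde n}$. Chaining this with the displayed bound $q^{2\tilde n} \le (1-\rho)\bar q^t$ gives $q^{2\tilde n}(1-\rho\bar q^t) \le (1-\rho)\bar q^t$, which is precisely the rearranged inequality. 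Thus both parts follow with no substantial obstacle beyond the careful sign tracking in part b).
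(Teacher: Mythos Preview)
Your proposal is correct. For part~b) your argument coincides with the paper's: both reduce to the bound $q^{2\tilde n}\le(1-\rho)\bar q^{\,t}$ obtained from the hypothesis, and both finish by discarding the nonnegative cross term $\rho\bar q^{\,t}q^{2\tilde n}$ (the paper writes the difference $(1-\rho)\bar q^{\,t}-q^{2\tilde n}+\rho\bar q^{\,t}q^{2\tilde n}$ and drops the last summand, which is exactly your step $q^{2\tilde n}(1-\rho\bar q^{\,t})\le q^{2\tilde n}$). For part~a) you take a different and slightly cleaner route: the paper defines $\phi(x)=\log(1-\tfrac{1}{x})+\tfrac{1}{x-1}$, computes $\phi'(x)<0$, and combines this with $\lim_{x\to\infty}\phi(x)=0$ to conclude $\phi(x)>0$; your reduction via the identity $1-\tfrac{1}{x}=(1+\tfrac{1}{x-1})^{-1}$ to the standard inequality $\log(1+s)\le s$ avoids any calculus and is more direct.
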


\begin{proof}
We first show part a).
Denote $\phi(x) = \log (1 - \tfrac{1}{x} ) + \tfrac{1}{x-1}$.
It can be easily seen that $\lim_{x \to +\infty} \phi(x) = 0$. Moreover,
for any $x > 1$, we have
\[
\phi'(x) = \tfrac{1}{x (x - 1)} - \tfrac{1}{(x-1)^2} = \tfrac{1}{x-1} \left( \tfrac{1}{x} - \tfrac{1}{x - 1}\right) < 0,
\]
which implies that $\phi$ is a strictly decreasing function for $x > 1$. Hence, we must have
$\phi(x) > 0$ for any $x > 1$.
Part b) follows from the following simple calculation.
\begin{align*}
\bar q^t - q^{2 \tilde n} - \rho \bar q^t (1 - q^{2 \tilde n})
= (1-\rho) \bar q^t  - q^{2 \tilde n} + \rho \bar q^t q^{2 \tilde n} \ge (1-\rho) \bar q^t  - q^{2 \tilde n} \ge 0.
\end{align*}
\end{proof}

We are now ready to prove Theorem~\ref{the_lower_bound}.

\noindent{\bf Proof of Theorem~\ref{the_lower_bound}}
Without loss of generality, we may assume that the initial point $x^0_i = 0$, $i = 1, \ldots, m$.
Indeed, the incremental gradient methods described in Subsection 3.3 are invariant with respect to a simultaneous
shift of the decision variables. In other words, the sequence of iterates $\{x^k\}$,
which is generated by such a method for minimizing the function $\Psi(x)$ 
starting from $x^0$, is just a shift of the sequence generated for minimizing 
$\bar \Psi(x) = \Psi(x + x^0)$
starting from the origin. 

Now let $k_i$, $i = 1, \ldots, m$, denote the number of times that the gradients of the 
component function $f_i$ are computed from iteration $1$ to $k$. Clearly
$k_i$'s are binomial random variables supported on $\{0, 1, \ldots, k\}$ such that
$\tsum_{i=1}^m k_i = k$. Also observe that
we must have $x^k_{i, j} = 0$ for any $k \ge 0$ and $k_j + 1 \le j \le \tilde n$, because each time the
gradient $\nabla f_i$ is computed, the incremental gradient methods add at most one more nonzero
entry to the $i$-th component of $x^k$ due to the structure of the gradient $\nabla f_i$.
Therefore, we have
\beq \label{bnd_dist_opt}
\tfrac{\|x^k - x^*\|^2_2}{\|x^0 - x^*\|^2_2}
= \tfrac{\tsum_{i=1}^m\|x_i^k - x_i^*\|^2_2}{\tsum_{i=1}^m \|x_i^*\|^2}
\ge \tfrac{\tsum_{i=1}^m \tsum_{j = k_i +1}^{\tilde n} (x^*_{i,j})^2 }{\tsum_{i=1}^m \tsum_{j=1}^{\tilde n} (x^*_{i,j})^2}
= \tfrac{\tsum_{i=1}^m (q^{2 k_i} - q^{2 \tilde n})}{m (1 - q^{2 \tilde n})}.
\eeq
Observing that for any $i = 1, \ldots, m$,
\[
\bbe[q^{2 k_i}] = \tsum_{t=0}^k \left[q^{2t} {k \choose t} p_i^t (1 - p_i)^{k-t}\right]
= [1 - (1 - q^2) p_i]^k,
\]
we then conclude from \eqnok{bnd_dist_opt} that
\[
\tfrac{\bbe[\|x^k - x^*\|^2_2]}{\|x^0 - x^*\|^2_2} \ge \tfrac{\tsum_{i=1}^m [1 - (1 - q^2) p_i]^k-mq^{2 \tilde n}}{m (1 - q^{2 \tilde n})}.
\]
Noting that $[1 - (1 - q^2) p_i]^k$ is convex w.r.t. $p_i$ for any $p_i \in [0,1]$ and $k \ge 1$, by
minimizing the RHS of the above bound w.r.t. $p_i$, $i = 1, \ldots, m$, subject to $\sum_{i=1}^m p_i = 1$ and
$p_i \ge 0$, we conclude that
\beq \label{bnd_dist_temp}
\tfrac{\bbe[\|x^k - x^*\|^2_2]}{\|x^0 - x^*\|^2_2} \ge \tfrac{ [1 - (1 - q^2) / m ]^k - q^{2 \tilde n}}{1 - q^{2 \tilde n}}
\ge \tfrac{1}{2}  [1 - (1 - q^2) / m ]^k,
\eeq
for any  $n \ge \underline n(m,k)$ (see \eqnok{def_N0}) and possible selection of $p_i$, $i = 1, \ldots, m$
satisfying \eqnok{prob_rand},
where the last inequality follows from Lemma~\ref{lemma_lower_bnd}.b).
Noting that
\begin{align*}
1 - (1 - q^2) / m = 1  - \left[ 1- \left(\tfrac{\sqrt{\cQ}-1}{\sqrt{\cQ}+1} \right)^2\right] \tfrac{1}{m}
= 1- \tfrac{1}{m} + \tfrac{1}{m} \left( 1 -\tfrac{2}{\sqrt{\cQ} + 1}\right)^2 \nn\\
= 1 - \tfrac{4}{m (\sqrt{\cQ} +1)} + \tfrac{4}{m (\sqrt{\cQ} +1)^2}
= 1- \tfrac{4 \sqrt{\cQ}}{m (\sqrt{\cQ} +1)^2},
\end{align*}
we then conclude from \eqnok{bnd_dist_temp} and Lemma~\ref{lemma_lower_bnd}.a) that
\begin{align*}
\tfrac{\bbe[\|x^k - x^*\|^2_2]}{\|x^0 - x^*\|^2_2} &\ge \tfrac{1}{2} \left[1- \tfrac{4 \sqrt{\cQ}}{m (\sqrt{\cQ} +1)^2}\right]^k
= \tfrac{1}{2} \exp\left(k \log  \left(1- \tfrac{4 \sqrt{\cQ}}{m (\sqrt{\cQ} +1)^2}\right) \right)\\
&\ge \tfrac{1}{2} \exp\left(- \tfrac{4 k \sqrt{\cQ} }{m (\sqrt{\cQ} +1)^2 - 4 \sqrt{\cQ}}  \right).
\end{align*}
\endproof
\section{Concluding remarks}\label{sec_remark}
In this paper, we present a new class of optimal first-order methods, referred to as primal-dual gradient methods,
for solving the finite-sum composite convex optimization problems given in the form of \eqnok{cp}. The optimal convergence of this algorithm
has been established based on the primal-dual optimality gap for the ergodic mean of iterates, i.e., $\bar z^k$, and the distance from 
the iterate $x^k$ to the optimal solution $x^*$. We also develop a randomized primal-dual gradient method which needs to compute
the gradient of only one randomly selected component $f_i$. The complexity bounds of the randomized primal-dual gradient method
have been
established in terms of the distance from the iterate $x^k$ to the optimal solution, and the primal optimality gap based on 
the ergodic mean of iterates, i.e., $\bbe[\Psi(\bar x^k) - \Psi^*]$. %turn out to be optimal whenever the dimension $n$ is large enough. 
%The convergence of the ergodic mean of iterates has also been established in terms of $\bbe[\Psi(\bar z^k) - \Psi^*]$.
We show that these bounds are not improvable when the dimension $n$ is large enough by developing
new lower complexity bounds for randomized incremental gradient methods.
Extensions of the randomized primal-dual gradient method to non-strongly convex, nonsmooth, and unbounded problems
are also discussed in this paper. It should be noted that in this paper we focus on the theoretic convergence properties of
these primal-dual gradient methods, and the algorithmic parameters were chosen in a conservative manner and were
dependent on a few problem parameters, e.g., $L$ and $\mu$. In the future, it will be interesting
to develop more adaptive versions of these algorithms which do not require the explicit estimation about $L$ and $\mu$.
%Moreover, it will be interesting to see whether the termination criterion related to the ergodic mean of iterates can be further improved,
%e.g., to $\bbe[\Psi(\bar z^k)] - \Psi^*$, for the randomized primal-dual gradient method.

%{\bf Acknowledgment:} We appreciate Professor Shai Shalev-Shwartz for reading the manuscript of the paper and
%pointing out a few relevant references.

\bibliographystyle{plain}
\bibliography{../glan-bib}

\end{document}